\newcommand{\comment}[1]{}
\newtheorem{thm}{Theorem}[section]
\newtheorem{cor}[thm]{Corollary}
\newtheorem{lem}[thm]{Lemma}
\newtheorem{conj}[thm]{Conjecture}
\theoremstyle{definition}
\newtheorem{defn}[thm]{Definition}
\newtheorem{ex}[thm]{Example}
\theoremstyle{remark}
\newtheorem{rem}{Remark}
\numberwithin{equation}{section}
\newcommand{\Z}{\mathbb{Z}}
\newcommand{\D}{\mathbb{D}}
\newcommand{\II}{\mathcal{I}}
\newcommand{\ol}{\overline}
\newcommand{\GL}{\mathcal{GL}}
\newcommand{\AL}{\mathcal{AL}}
\newcommand{\s}{\sigma}
\author{A.\ C.\ Burgess \footnotemark[1] 
\and 
P.\ Danziger \footnotemark[2] 
\and
T.\ Traetta \footnotemark[3]
}
\begin{document}
\date{\today}
\title{On the Oberwolfach problem for single-flip $2$-factors via graceful labelings}

\maketitle

\footnotetext[1]{Department of Mathematics and Statistics, University of New Brunswick, 100 Tucker Park Rd., Saint John, NB,  E2L 4L5, Canada}
\footnotetext[2]{Department of Mathematics, Ryerson University, 350 Victoria St., Toronto, ON,  M5B 2K3, Canada}
\footnotetext[3]{DICATAM, Universit\`{a} degli Studi di Brescia, Via Branze 43, 25123 Brescia, Italy}

\begin{abstract}
\noindent

Let $F$ be a $2$-regular graph of order $v$. 
The Oberwolfach problem $OP(F)$, posed in 1967 and still open,
asks for a decomposition of $K_v$ into copies of $F$.
In this paper we show that $OP(F)$  has a solution whenever
$F$ has a sufficiently large cycle which meets a given lower bound and, in addition, 
has a single-flip automorphism, which is an involutory automorphism
acting as a reflection on exactly one of the cycles of 
$F$. Furthermore, we prove analogous results for the minimum covering version and
the maximum packing version of the problem. 
We also show a similar result when the edges of $K_v$ have multiplicity 2, 
but in this case we do not require that $F$ be single-flip.

Our approach allows us to explicitly construct solutions to the Oberwolfach Problem with well-behaved automorphisms, in contrast 
with some recent asymptotic results, based on probabilistic methods, 
which are nonconstructive and do not provide a lower bound on the order of $F$ 
that guarantees the solvability of $OP(F)$.

Our constructions are based on a doubling construction which applies to 
graceful labelings of $2$-regular graphs with a vertex removed. We show that this class of 
graphs is graceful as long as the length of the path-component is sufficiently large.
A much better lower bound on the length of the path is given for an $\alpha$-labeling of such graphs to exist. 
\end{abstract}

\noindent {\bf Keywords: Oberwolfach problem, $2$-factorization, regular, $1$-rotational, $2$-pyramidal, graceful labeling, $\alpha$-labeling, maximum packing, minimum covering, graph decomposition}\\

\noindent {\bf MSC: 05C51, 05C78}

\eject
\section{Introduction}
Given a graph $\Gamma$ with no loops, 
we denote by $V(\Gamma)$ and $E(\Gamma)$ the set of vertices and the multiset of edges of $\Gamma$, respectively, and say that $\Gamma$ is a graph of order $|V(\Gamma)|$ and size $|E(\Gamma)|$.
We denote by $\lambda K_v$ the \emph{$\lambda$-fold complete graph} on $v$ vertices, namely, 
the graph of order $v$ with every two distinct vertices joined by $\lambda$ edges. 
When $v$ is even, we denote by
$\lambda K_v-I$ (resp. $\lambda K_v+I$) the graph $\lambda K_v$ with the $1$-factor $I$ removed (resp. added). 
For brevity, letting $\s\in\{-1, 0, 1\}$ such that
$\lambda(v-1) \equiv \s\pmod{2}$, we set
\[
(\lambda K_v)^\s =
\begin{cases}
   \lambda K_v - I & 
   \text{if $\s = -1$},\\
   \lambda K_v  & \text{if $\s =0$}, \\  
   \lambda K_v + I & 
   \text{if $\s = 1$}. 
\end{cases}
\] 
We denote by
$P = u_0, u_1, \ldots, u_\ell$ the path of length $\ell\geq 0$ (an $\ell$-path)
whose edges are $\{u_i, u_{i+1}\}$ for $0 \leq i< \ell$. By joining the ends of $P$ when $\ell\geq 1$,
we obtain the cycle $(u_0, u_1, \ldots, u_\ell)$ of length $\ell+1$ (an $(\ell+1)$-cycle).
A $2$-cycle is then an edge counted twice.

A {\em factor} $F$ of $\Gamma$ is a spanning subgraph of $\Gamma$, namely, a subgraph of $\Gamma$ such that $V(F)=V(\Gamma)$; also, if $F$ is $i$-regular, we call $F$ an {\em $i$-factor}. 
In particular, a $1$-factor of $\Gamma$ (also called a {\em perfect matching}) is the vertex-disjoint union of edges of $\Gamma$ whose vertices partition $V(\Gamma)$, while a $2$-factor of $\Gamma$ is the vertex-disjoint union of cycles whose vertices span $V(\Gamma)$. Letting $2\leq \ell_1 <\ell_2< \ldots< \ell_t$, we denote by 
$[\;^{\alpha_1}\ell_1, ^{\alpha_2}\ell_2, \ldots, ^{\alpha_t}\ell_t]$ any $2$-regular graph containing 
$\alpha_i$ cycles of length $\ell_i$, $i=1,2, \ldots, t$. 


A {\em $2$-factorization} of a graph $\Gamma$ is a set $\mathcal{F}$ of $2$-factors of $\Gamma$ whose edge sets partition $E(\Gamma)$. 
Although it is well known that a regular graph has a 2-factorization if and only if every vertex has even degree, if we ask for the factorization $\mathcal{F}$ to contain only copies of a given $2$-factor $F$
(briefly, an $F$-factorization) then the problem becomes much harder. 
Indeed, if $\Gamma = \lambda K_v$ with $\lambda(v-1)$ even, and $F$ is a simple $2$-regular graph of order $v$, we have the Oberwolfach problem $OP(\lambda, F)$,
originally posed by Ringel in 1967 when $\lambda = 1$,
which is well known to be hard. The case $\lambda \geq 2$ was first studied in 1997 \cite{Gvo97}.

We are interested in the more general problem, 
denoted by $OP^\s(\lambda, F)$, which asks for an $F$-factorization of 
$(\lambda K_v)^\s$. By definition of $(\lambda K_v)^\s$, 
we have that $\sigma=0$ if and only if $\lambda(|V(F)|-1)$ is even, 
hence $OP^0(\lambda, F) = OP(\lambda, F)$. 
We refer to the extended version simply as the Oberwolfach problem and
write $OP^\s(F)$ when $\lambda=1$. 

When $\sigma=-1$, we also use the notation $OP^-(\lambda, F)$. In this case,
$\lambda(v-1)$ is odd and $(\lambda K_v)^\sigma = \lambda K_v - I$. This 
can be seen as the \emph{maximum packing version} of the original Oberwolfach problem, 
which was first considered in 1979 \cite{HuKoRo79} when $\lambda=1$, and has since received much attention.
Similarly, we use the notation $OP^+(\lambda, F)$ when $\sigma=1$. Again
$\lambda(v-1)$ is odd, but $(\lambda K_v)^\sigma = \lambda K_v + I$. This 
can be seen as the \emph{minimum covering version} of the original Oberwolfach problem, 
which  has been formally studied only recently in \cite{BBBSV19} when $\lambda=1$.

It is known that $OP^\s(\lambda, F)$, with $\s\in\{-1,0\}$, has no solution when
$\lambda=1$ and $F$ is isomorphic to 
$[^2 3]$, $[4, 5], [^2 3 , 5]$, or when 
$\lambda \not\equiv 0 \pmod 4$  and $F$ is isomorphic to $[^4 3]$ (see, \cite[Section VI.12]{Handbook}).
Otherwise, a solution is known 
when $F$ is uniform, that is, it contains cycles all of the same length
\cite{Alspach Haggvist 85, ASSW, Hoffman Schellenberg 91}, 
when the order of $F$ is less than $60$ 
\cite{DFWMR 10, SDTBD},
when $F$ is bipartite, and so contains only even cycles 
\cite{Alspach Haggvist 85, Bryant Danziger 11, Haggvist 85}, 
when the order of $F$ belongs to an infinite set of primes \cite{Bryant Schar 09},
when $F$ has order $2p$ and $p\equiv 5 \pmod{8}$ is a prime  \cite{AlBrHoMaSc 16},
or when $F$ has exactly two cycles 
\cite{BBR97, Buratti Traetta 12, Traetta 13}. 
Infinitely many other solutions are known, 
see \cite[Section VI.12]{Handbook} for a survey updated to 2006.
Although the original problem $OP(\lambda, F)$ and its maximum packing variant $OP^-(\lambda, F)$ are the most studied versions,
a complete solution to $OP(\lambda, F)$ or $OP^-(\lambda, F)$ has not yet been achieved.

As far as we know the only known results on $OP^+(\lambda, F)$ concern the case $\lambda=1$.  
It was shown in \cite{AMS87, LM93} that $OP^+([^n 3])$
has a solution if and only if $n$ is even and $n\geq 6$. 
The solvability of $OP^+(F)$ in the more general case in which $F$ is uniform of even order has been proven
in \cite{BBBSV19}, except possibly when $F=[^4 \ell]$ and $\ell\geq 5$ is odd. In the same paper the authors point out that the complete solution to $OP(F)$ when $F$ is bipartite, mentioned above, implies the solvability of $OP^+(F)$.

Infinitely many solutions to $OP(\lambda, F)$ and $OP^-(\lambda, F)$ 
have been constructed by requiring that they have an automorphism group
$G$ with an \emph{$f$-pyramidal} action on the vertex set, which means that $G$ fixes $f$ vertices and acts sharply transitively on the remaining. 
We speak of an $f$-pyramidal solution over $G$, although the most common terminology is \emph{regular} when $f=0$, and \emph{$1$-rotational} when $f=1$. 
Regular solutions have been built mainly over the cyclic group (see, for example, 
\cite{BuDel04, BuRaZu05,  Buratti Rinaldi 05}). 
It is worth pointing out that the version of the Oberwolfach problem concerning complete graphs of any infinite cardinality has recently been solved \cite{Costa20} by constructing regular solutions over any involution-free group of the same cardinality.
The $1$-rotational approach has proven much more successful (since 
it was formally introduced  in \cite{Buratti Rinaldi 08}) to construct solutions to $OP(F)$ for odd orders.
In \cite{BuRi09} the authors characterize completely the $1$-rotational solutions of $OP([3,2s])$ and via a composition technique infinitely many $1$--rotational solutions are built in \cite{RiTr11}.
Other $1$-rotational constructions include those given in \cite{BuZu98, BuZu01} concerning $OP([\,^{2n+1}3])$, 
those in \cite{Traetta 13} for the case where $F$ has two components and which rely on a doubling construction described in \cite{Buratti Traetta 12},  
and those given in a series of papers
\cite{OlPr03, Ol05, OlSt09, OlWil11} which mainly focus on the case where $F$ has three components.
It is worth pointing out that classical results on the Oberwolfach problem turn out to be $1$--rotational: for instance, the well-known Walecki construction yields a $1$--rotational solution to OP$([2n+1])$ (see, for example, \cite{Al08}). 
The structure of $2$-pyramidal solutions to $OP^-(F)$ has been formally investigated in \cite{BuTr15}.
There (see also \cite{HuKoRo79}) it is pointed out that any 1-rotational $F$-factorization of $K_{2n+1}$ easily yields a 2-pyramidal
$F^*$-factorization of $K_{2n+2}-I$ for a suitable $2$-factor $F^*$. However they prove that the converse does not hold in general, although it was shown to be true \cite{BuRiTr14} when $F^*$ is a cycle, namely, when we deal with $2$-pyramidal Hamiltonian cycle systems.
In this paper we also point out (see Theorem \ref{2pyrsols}) that 
every 1-rotational to $OP(F)$ of order $2n+1$ also gives rise to 
a regular solution to $OP^+(F^*)$ for a suitable $2$-factor $F^*$ of order $2n$.
A general treatment of $f$-pyramidal $2$-factorizations of $K_v$ can be found in \cite{BMR09}.

Most of the known solutions to $OP(F)$ 
concern factors $F$ with a specific cycle structure.
Here, we build on the techniques used in \cite{Traetta 13} when $F$ has two components, and deal with $OP^\s(\lambda, F$) when $F$ is a \emph{single-flip} $2$-regular graph, namely, it is simple and has an involutory automorphism $\varphi$ flipping exactly one cycle: this means that there is exactly one cycle of $F$ on which $\varphi$
acts as a reflection. Note that $2$-factors $F$ that are  bipartite, or uniform of odd order, or have two components of distinct parities (in which cases $OP(F)$ has proven to be solvable) fall into the class of single-flip $2$-factors. We also notice that the only known cases, mentioned above, where it is known that $OP^\s(\lambda, F$)
has no solution concern single-flip $2$-factors. The cycle structure of these $2$-regular graphs is the following.

\begin{thm}\label{reverse} 
$F$ is a single-flip $2$-regular graph if and only if
\begin{equation}\label{reverse:eq}
  F\simeq
  \begin{cases}
    [2\ell_0, 2\ell_1, \ldots, 2\ell_r, \;^{2}\ell_{r+1}, \ldots, \;^{2}\ell_{s}], 
    & \text{if $F$ has even order},\\
    [2\ell_0-1, 2\ell_1, \ldots, 2\ell_r, \;^{2}\ell_{r+1}, \ldots, \;^{2}\ell_{s}], 
    & \text{if $F$ has odd order},    
  \end{cases}  
\end{equation}
where $s\geq r\geq 0$,
$\ell_i\geq 2$ and $\ell_{j}\geq 3$ for every 
$0\leq i \leq r$ and $r+1\leq j\leq s$. 
\end{thm}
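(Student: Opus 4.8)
The plan is to prove both implications by analysing how an involution can act on a disjoint union of cycles, using the classical description of the automorphism group of a single cycle. Recall that $\mathrm{Aut}(C_\ell)$ is the dihedral group $D_\ell$, whose elements are the $\ell$ rotations and the $\ell$ reflections; an element of order dividing $2$ is the identity, a reflection, or---when $\ell$ is even---the antipodal rotation $u_i\mapsto u_{i+\ell/2}$. A reflection of $C_\ell$ fixes exactly one vertex when $\ell$ is odd and either two or no vertices when $\ell$ is even, whereas the antipodal rotation is always fixed-point-free.

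For the forward implication I would argue as follows. Since $\varphi$ is an automorphism of the $2$-regular graph $F$, it permutes the cycles of $F$ and preserves their lengths; as $\varphi^2=\mathrm{id}$, the induced permutation on cycles has order at most $2$, so each cycle is either stabilised setwise or interchanged with a second cycle of the same length. The interchanged cycles come in pairs of equal length and account for the terms ${}^{2}\ell_{r+1},\dots,{}^{2}\ell_{s}$; since $F$ is simple these lengths satisfy $\ell_j\ge 3$, and $\varphi$ acts as a reflection on none of them. On a stabilised cycle $C$ the restriction $\varphi|_C$ is an involutory automorphism of $C$, hence (by the classification above, and because a single-flip $\varphi$ fixes no cycle pointwise) it is either a reflection or the antipodal rotation. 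By hypothesis exactly one stabilised cycle is reflected---this is the flipped cycle, of some length $m$---while every other stabilised cycle carries the antipodal rotation and is therefore of even length; these give the terms $2\ell_1,\dots,2\ell_r$ with $\ell_i\ge 2$. Finally, since the antipodal and interchanged contributions are all of even total size, the order of $F$ has the same parity as $m$; writing $m=2\ell_0$ when the order is even and $m=2\ell_0-1$ when it is odd (with $\ell_0\ge 2$ since $m\ge 3$) yields exactly the two cases of \eqref{reverse:eq}.

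For the reverse implication I would give a direct construction. Given $F$ in the form \eqref{reverse:eq}, define $\varphi$ to act as a fixed reflection on the distinguished cycle (of length $2\ell_0$ or $2\ell_0-1$), as the antipodal rotation on each cycle of length $2\ell_i$ for $1\le i\le r$, and as the length-preserving interchange between the two cycles in each pair of length $\ell_j$ for $r+1\le j\le s$. Each piece is an involution and an automorphism of its part, so $\varphi$ is an involutory automorphism of $F$; the only cycle on which it acts as a reflection is the distinguished one, so $\varphi$ is single-flip. As every cycle length is at least $3$, $F$ is simple, completing this direction.

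The main obstacle is the analysis of the action on each stabilised cycle: one must check that an involutory rotation is necessarily the antipodal map and hence forces even length, and---crucially---that a single-flip automorphism acts nontrivially on every cycle it stabilises (equivalently, fixes no cycle pointwise, so that its only fixed vertices, if any, lie on the flipped cycle). Without this last point the identity action on a cycle would be permitted and the characterisation would collapse; with it, the even/odd dichotomy for the flipped cycle follows purely from the parity bookkeeping above.
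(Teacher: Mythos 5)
Your proof is correct and follows essentially the same route as the paper's own (much terser) argument: the forward direction analyses how the involution permutes the cycles --- swapped pairs give the ${}^{2}\ell_{r+1},\dots,{}^{2}\ell_{s}$ terms, while a setwise-stabilised cycle carries either the unique reflection or the antipodal rotation and hence has even length --- and the reverse direction exhibits the involution explicitly, exactly as in the paper. The one point you rightly single out as delicate, namely that a stabilised non-flipped cycle cannot be fixed pointwise (otherwise a graph such as $[3,5]$, with a reflection of the triangle and the identity on the $5$-cycle, would count as single-flip and break the classification), is assumed silently in the paper's proof as well, so your write-up is if anything more careful than the source on precisely the step where the argument is thinnest.
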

\begin{proof}
Let $F$ be a $2$-regular graph with an involutory automorphism $\varphi$ acting as a reflection on exactly one cycle of $F$, say $C_0$ of length
$2\ell_0-1$ or $2\ell_0$. This means that each other cycle $C$ of $F$ is 
is either fixed by $\varphi$, and this forces $C$ to be an even length cycle, or it is mapped onto another cycle of $F$ of the same length as $C$.
Therefore $F$ satisfies \eqref{reverse:eq}.

Conversely, if $F$ is a $2$-regular graph as in \eqref{reverse:eq}, we consider the involutory automorphism $\varphi$ of $F$ which rotates by $180^\circ$ each cycle of length $2\ell_1, \ldots, 2\ell_r$ cycle, swaps the pairs of cycles of length $\ell_{r+1}, \ldots, \ell_{s}$, 
and acts as a reflection on the cycle of length $2\ell_0$ or $2\ell_0-1$.
Therefore $F$ is a single-flip $2$-regular graph.
\end{proof}
\begin{rem}\label{rem:reverse}
Without loss of generality, in Theorem \ref{reverse}  we can assume that 
\begin{equation}\label{eq:reverse}
\ell_1, \ell_2,  \ldots, \ell_r\;\;\text{are pairwise distinct}.
\end{equation}
Otherwise, up to a permutation of the $\ell_i$s, we can have $\ell_1 = \ell_2$, 
hence
\[
[2\ell_1, \ldots, 2\ell_r, \;^{2}\ell_{r+1}, \ldots, \;^{2}\ell_{s}] = 
[2\ell_3, \ldots, 2\ell_r, \;^{2}(2\ell_1), \;^{2}\ell_{r+1}, \ldots, \;^{2}\ell_{s}].
\]
By induction, one can easily see that the assumption \eqref{eq:reverse} is not restrictive.
\end{rem}
We do not require that a solution to $OP^\s(\lambda, F)$ with $\lambda>1$ have distinct factors.
Therefore, if $F$ has odd order or $\lambda$ is even, then $\lambda$ copies of a solution to $OP(F)$ or $\lambda/2$ copies of a solution to $OP(2, F)$
yield a solution to  $OP(\lambda, F)$. 
This does not happen when $\lambda>1$ is odd and $F$ has even order:
in this case $\lambda$ copies of a solution $\mathcal{S}$  to $OP^\pm(F)$
provide an $F$-factorization of $\lambda(K_v\pm I)$, that is,
the graph $K_v\pm I$ with its edges counted $\lambda$ times, which is not isomorphic to
$\lambda K_v \pm I$. A solution to $OP^\pm(\lambda, F)$
can actually be obtained
by joining $\mathcal{S}$ with $(\lambda-1)/2$ copies of a solution of $OP(2, F)$.
This means that the cases $OP(2, F)$,
$OP(F)$ when $F$ has odd order, and $OP^{\pm}(F)$ when $F$ has even order, are crucial
and equally important 
in order to solve $OP^\s(\lambda, F)$ for every $\lambda$.

One of the main results of this paper is the following and concerns the existence of a solution to
$OP^\s(F)$ when $F$ is a single-flip $2$-regular graph with a sufficiently large cycle.

\begin{thm}\label{mainOP} 
Let $F=[h, 2\ell_1, \ldots, 2\ell_r, \;^{2}\ell_{r+1}, \ldots, \;^{2}\ell_{s}]$ 
where
$2\leq \ell_1< \ell_2< \ldots < \ell_r$ and 
$h,\ell_{r+1},\ell_{r+2}, \ldots, \ell_{s} \geq 3$,
and let $I=\{i\mid 2<\ell_i\; \text{is even}, 1\leq i\leq s\}$.
Then $OP^\s(F)$ has a $(1-\s)$-pyramidal solution whenever 
\[h> 16\max(1,h_0) + 20\max(3,h_1) + 29,\] where
 \begin{align*}
   & h_0 = 2|I|\max\{\ell_i + 3\mid i\in I\}  - 1,\\
   & h_1 = 
   7^{s-|I|-1}\max\{2\ell_i+1\mid i\not\in I\;\text{and}\; \ell_i\neq 2\}. 
 \end{align*}
\end{thm}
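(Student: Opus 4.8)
The plan is to reduce $OP^\s(F)$ to the existence of a single graceful-type labeling and then pass to a factorization through a doubling construction in the spirit of \cite{Buratti Traetta 12, Traetta 13}. By Theorem \ref{reverse}, $F$ carries the single-flip involution $\varphi$, which reflects the $h$-cycle, rotates each $2\ell_i$-cycle by a half-turn, and interchanges the two copies of each $\ell_j$-cycle. I would realize $\varphi$ as the reflection $x\mapsto -x$ of a cyclic group, so that its $f=1-\s$ fixed points become the $f$ fixed vertices of the desired $(1-\s)$-pyramidal action. The parity of $h$ then selects the case: when $h$ is odd the reflection fixes a single vertex and one obtains a $1$-rotational solution on $K_v$ ($\s=0$); when $h$ is even the reflection either fixes two vertices, which become $\infty_1,\infty_2$ and give a $2$-pyramidal solution on $K_v-I$ ($\s=-1$), or fixes two edges and gives a regular solution on $K_v+I$ ($\s=1$), with the two fixed edges furnishing the added $1$-factor. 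In each case the $\varphi$-orbits of the edges of $F$ are put in bijection with the group differences that a base factor must realize, so a solution reduces to producing one suitable base factor.

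I would encode that base factor as a graceful labeling of the associated \emph{$2$-regular graph with a vertex removed}
\[
  G^* \;=\; P \,\cup\, C_{2\ell_1}\cup\cdots\cup C_{2\ell_r}\cup C_{2\ell_{r+1}}\cup\cdots\cup C_{2\ell_s},
\]
in which the path-component $P$ records the reflected $h$-cycle and has length growing linearly in $h$, while each cycle $C_{2\ell_i}$ records a component of $F$---kept intact by the doubling when $i\le r$ and split into the $\varphi$-swapped pair of $\ell_i$-cycles when $i>r$. The first step is then a \emph{Doubling Lemma}: a graceful labeling of $G^*$ yields a $(1-\s)$-pyramidal solution to $OP^\s(F)$, and an $\alpha$-labeling yields one under the weaker hypothesis on $h$. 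Granting the bijection above, verifying this lemma is a direct check that the developed factors partition $(K_v)^\s$ and are each isomorphic to $F$; the only delicacy is the bookkeeping of the one or two fixed edges or vertices of $\varphi$, which is precisely what separates the three values of $\s$.

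The substance of the proof is the labeling theorem for $G^*$, and here I would split the cyclic part by the parity of $\ell_i$, since $C_{2\ell_i}$ admits an $\alpha$-labeling exactly when $\ell_i$ is even (length $\equiv 0\pmod4$) and is not even gracefully labelable on its own when $\ell_i$ is odd (length $\equiv 2\pmod4$). The cycles with $\ell_i$ even---those indexed by $I$ together with the $4$-cycles ($\ell_i=2$)---can each be spliced into the labeling by an explicit $\alpha$-labeling, consuming only a linear block of labels and producing the mild term $h_0$; the cycles with $\ell_i$ odd cannot be so incorporated and must instead be absorbed recursively into the path, each absorption enlarging the required slack by a bounded factor (about $7$) and thereby generating the factor $7^{\,s-|I|-1}$ in $h_1$. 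The long path plays a double role, both as a block of consecutive vertex-labels and as the device that fills in every still-missing edge-difference, so its length must dominate the combined contribution of the cyclic part. Translating ``$\mathrm{length}(P)\gtrsim h_0+h_1$'' back through $\mathrm{length}(P)\approx h/2$ and collecting the constants incurred at each splicing and absorption step yields the stated bound $h>16\max(1,h_0)+20\max(3,h_1)+29$.

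The main obstacle is exactly this labeling construction: placing cycles of several distinct lengths and both parities on one shared label set while keeping every edge-difference distinct, and controlling the path length \emph{linearly} rather than merely for $h$ large enough. The $\equiv2\pmod4$ cycles are the crux, as they obstruct any global $\alpha$-labeling and force the recursive absorption responsible for the exponential term; maintaining a consistent ledger of used labels and realized differences across this recursion, and certifying that the path never runs short, is where the bulk of the estimates---and the explicit constants $16,20,29$---will originate. I would therefore isolate two lemmas, an $\alpha$-labeling statement yielding $h_0$ and a graceful-labeling statement yielding $h_1$, prove the cleaner $\alpha$-version first, and then bootstrap it to handle the remaining odd-length components.
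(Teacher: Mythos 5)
Your high-level architecture does match the paper's: reduce $OP^\s(F)$ to the existence of a $2$-starter over $\Z_{2n}$ whose cycle through $\infty$ has length $h$ (with Theorem \ref{2pyrsols} covering $\s=\pm1$), obtain that starter by doubling a graceful labeling of a zillion graph whose path has length $\approx h/2$, split the cycles by the parity of $\ell_i$, handle the even ones via $\alpha$-labelings (the linear term $h_0$) and the odd ones via a recursive absorption into the path with a bounded blow-up factor (the $7^{s-|I|-1}$ in $h_1$). But your central encoding is backwards, and this is a genuine gap. You place a cycle of length $2\ell_i$ in the labeled graph $G^*$ for every $i$ and assert that the doubling ``keeps it intact'' when $i\le r$ and ``splits it into the $\varphi$-swapped pair of $\ell_i$-cycles'' when $i>r$. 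The doubling forms $\Gamma\cup(\Gamma+n)$ plus a few connecting edges: it duplicates every cycle of $\Gamma$ and splits nothing, and a $2\ell_i$-cycle together with its translate contributes $4\ell_i$ vertices, so the order count already fails ($2(|P|+2\sum_i\ell_i)\neq h+2\sum_i\ell_i$ when $|P|\approx h/2$). The correct encoding labels a cycle of length $\ell_i$ for each $i$ with $\ell_i\geq 3$: for $i>r$ the cycle and its translate directly yield the pair $^{2}\ell_i$, while for $i\le r$ the pair must be \emph{merged} into a single $2\ell_i$-cycle by exchanging an edge of $C_i$ and of $C_i+n$ with a pair of edges of the cycle through $\infty$ realizing the shifted difference $d_i+n$ (Theorem \ref{doubling3}). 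This edge-exchange is the key idea your proposal lacks; it is also what imposes the quantitative condition $(\Delta C_i+n)\cap\Delta P\neq\emptyset$ on the labeling, which is exactly what the labeling theorems (Theorems \ref{thm:main1:1}, \ref{thm:main2odd:1}, \ref{thm:main2general:1}) are engineered to guarantee and where the constants $16$, $20$, $29$ actually arise. Without it, your ``Doubling Lemma'' cannot produce the single $2\ell_i$-cycles of $F$, so the ``direct check'' you defer to would fail.

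Two smaller points. The case $\ell_1=2$ (a $4$-cycle of $F$) cannot be treated as just another even cycle: the paper encodes it as a repeated edge (a $2$-cycle) in the labeling, with the modified difference multiset $\pm\{1,\;^23\}\cup\pm\II(4,\epsilon)$ and the extra requirement that the path realize the difference $\epsilon$ (Theorem \ref{doubling2}). And your parity discussion, while landing on the right dichotomy, cites the wrong obstruction: since the labeled cycles have length $\ell_i$ rather than $2\ell_i$, the difficulty with odd $\ell_i$ is that the corresponding cycle is non-bipartite and hence admits no $\alpha$-labeling, which is what forces the recursive graceful construction and the base-$7$ growth.
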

An improvement of this bound, when $F$ has additional properties, is given in 
Theorems \ref{mainOPeven} and \ref{mainOPodd}.
Furthermore, we prove the following analogous result concerning  $OP(2, F)$,
but in this case $F$ need not be a single-flip $2$-factor.
\begin{thm}\label{mainOP2}
Let $F=[h, \ell_1,\ldots, \ell_r, \ell'_1,\ldots, \ell'_s,]$ be any simple $2$-regular graph,
where the $\ell_i$s are even and the $\ell'_j$s are odd. 
Also, set $\ell=\max\{\ell_1,\ldots, \ell_r\}$ and  $\ell'=\max\{\ell'_1,\ldots, \ell'_s\}$.
Then $OP(2, F)$ has a $1$-rotational solution whenever
\[h> 
  \begin{cases}
    12r(\ell+3) + 7^{s}(2\ell'+1) -6  &\text{if $r,s > 0$}, \\    
    2(r+1)(\ell+1) +5 &\text{if $r> 0=s$},  \\    
    3\cdot7^{s-1}(2\ell'+1) &\text{if $s> 0=r$}. \\       
  \end{cases} 
\]   
\end{thm}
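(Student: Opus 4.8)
The plan is to reduce $OP(2,F)$ to a pure graph-labeling question via the doubling construction. Write $v=|V(F)|=h+\sum_i\ell_i+\sum_j\ell'_j$ and set $n=v-1$. Deleting one vertex of the $h$-cycle turns $F$ into a graph $F^*$ consisting of a path on $h-1$ vertices together with the cycles $\ell_1,\dots,\ell_r,\ell'_1,\dots,\ell'_s$; crucially $F^*$ has exactly $v-2=n-1$ edges. I would construct a $1$-rotational solution on the vertex set $\Z_n\cup\{\infty\}$, with $\Z_n$ acting by translation and fixing $\infty$, obtained as the orbit of a single base factor $F_0\simeq F$. The two edges at $\infty$ in $F_0$ are joined to the two ends of the path, so that $\infty$ closes the path back into an $h$-cycle and indeed $F_0\simeq F$; their translates cover the doubled cone at $\infty$ automatically, since each of these two edges sweeps all of $\Z_n$ and so each cone edge is hit exactly twice, matching the multiplicity in $2K_v$. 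Everything then comes down to choosing the $\Z_n$-labels of $F^*$ so that the internal edges tile the internal part of $2K_v$.

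The key point is that a graceful labeling of $F^*$ does exactly this. Such a labeling is a bijection $V(F^*)\to\{0,1,\dots,n-1\}=\Z_n$ (a bijection because $F^*$ has $n$ vertices and $n-1$ edges), whose edges realize the differences $1,2,\dots,n-1$, each once. Reading an edge of graceful difference $d$ inside $\Z_n$ yields the two signed differences $d$ and $n-d$, so pairing the edge of difference $d$ with the edge of difference $n-d$ (and treating $d=n/2$ as self-paired when $n$ is even) shows that every nonzero element of $\Z_n$ occurs exactly twice among the signed differences of $F^*$. Hence the $\Z_n$-orbit of $F_0$ covers each internal edge of $2K_v$ exactly twice, and together with the $\infty$-edges this is precisely a $1$-rotational $F$-factorization of $2K_v$, provided $F_0$ has trivial stabilizer so the orbit has full length $n=v-1$. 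I would emphasize that this doubling is what makes the single-flip hypothesis unnecessary here: the factor $2$ in $2K_v$ is supplied by the $\pm$ symmetry of differences in $\Z_n$, not by an automorphism of $F$.

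It remains to show that $F^*$ is graceful once $h$ is large enough, and the three branches of the bound correspond to which labeling result is available. When $r>0=s$ the graph $F^*$ is bipartite, so I would invoke the $\alpha$-labeling result for a path together with even cycles, which requires only a short path and yields the linear bound $2(r+1)(\ell+1)+5$. When odd cycles are present ($s>0$) an $\alpha$-labeling is impossible, and I would instead use the general graceful-labeling result for a path together with arbitrary cycles; there each odd cycle must be inserted into the labeling essentially one at a time, which is what produces the exponential factor $7^{s}$ (respectively $3\cdot 7^{s-1}$ when $r=0$) alongside the even-cycle contribution $12r(\ell+3)$. Translating the required path length $h-2$ back into a bound on $h$ then gives precisely the three displayed inequalities.

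The genuinely hard step is establishing gracefulness of a path with several odd cycles attached, with a controlled path length: even cycles are comparatively easy since they carry $\alpha$-labelings, but odd cycles force a recursive insertion whose cost compounds multiplicatively, and keeping that cost down to $7^{s}$ is the technical heart of the matter. A secondary check is that the base factor $F_0$ has trivial $\Z_n$-stabilizer; since the labeling is a bijection onto $\Z_n$ and $\infty$ is fixed, a nontrivial stabilizing translation would force an impossible periodicity of the labels, so the orbit has full length and delivers the required $v-1$ factors.
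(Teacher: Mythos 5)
Your proposal is correct and follows essentially the same route as the paper: the doubling through $\infty$ and the $\pm d \mapsto \{d, n-d\}$ argument is exactly the paper's Theorem \ref{1rotsols2}, and the three branches of the bound come from the same three labeling results (Corollary \ref{cor:main1} for the bipartite case, Theorem \ref{thm:main2odd:2} for odd cycles only, Theorem \ref{thm:main2general:1} for the mixed case). The only inessential difference is your stabilizer check, which the paper sidesteps by allowing repeated factors and counting edge multiplicities directly.
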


We would like to point out that recently probabilistic methods have been used 
to provide asymptotic results on longstanding open problems in design theory and graph decompositions. 
In particular, in \cite{GJKKO} the authors 
provide an asymptotic result on the solvability of $OP(F)$. 
However, the results in \cite{GJKKO} are not constructive 
and do not provide a lower bound on the order of $F$ that guarantees the solvability of $OP(F)$.
In strong contrast, 
our approach allows us to explicitly construct a solution to $OP^\s(F)$ whenever $F$ satisfies the assumptions of Theorem \ref{mainOP} which provides an explicit lower bound. Furthermore, our solutions have a well-behaved automorphism group.

Given a graph $\Gamma$ with $\epsilon$ edges, a \emph{graceful labeling} of $\Gamma$ is a labeling of its vertices with integers from 0 to $\epsilon$, so that the differences on the edges give every integer from 1 to $\epsilon$.  
A formal definition of graceful labelings
of a $2$-regular graph with a vertex removed, 
that is, the vertex disjoint union of cycles and a path, appears in Definition \ref{def:GL}.
The proof of Theorem \ref{mainOP} relies on a doubling construction defined in
\cite{Buratti Traetta 12} which applies to graceful labelings of these graphs.
This approach was used  in \cite{Traetta 13} by the third author to solve completely 
$OP(\lambda, F)$ when $F$ consists of exactly two components. 
Much of this article is devoted to constructing such graceful labelings which we prove to exist whenever the path is sufficiently long. 
More precisely, we prove the following.

\begin{thm}\label{mainGL} Let $\Gamma$ be the graph whose components are $s$ cycles of length $\ell_1, \ell_2,\ldots, \ell_s\geq 3$ and a path of length $m$,
and let $I=\{i\mid \ell_i\; \text{is even}, 1\leq i\leq s\}$.
Then $\Gamma$ has a graceful labeling whenever
\[m\geq 6\max(1,m_0) + 7\max(3,m_1) + 9,\] 
where 
  \begin{align*}
   & m_0 = 2|I|\max\{\ell_i + 3\mid i\in I\}  - 1,\\
   & m_1 = 
   7^{s-|I|-1}\max\{2\ell_i+1\mid i\not\in I\}. 
 \end{align*}
 \end{thm}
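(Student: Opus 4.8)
The plan is to construct the required graceful labeling explicitly, using the long path as a flexible backbone into which the $s$ cycles are inserted, one block at a time. Writing $\epsilon = m + \sum_{i=1}^{s}\ell_i$ for the number of edges of $\Gamma$, note that $\Gamma$ has exactly $\epsilon + 1$ vertices, so a graceful labeling is nothing but a bijection from $V(\Gamma)$ onto $\{0, 1, \ldots, \epsilon\}$ whose $\epsilon$ induced edge-differences are exactly $1, 2, \ldots, \epsilon$. Two constraints govern the construction: each component must be assigned a set of differences that it can actually realize---for a cycle this means a closed walk, which in particular forces the assigned differences to sum to an even number---and the difference sets of the components must tile $\{1, \ldots, \epsilon\}$ with no repetition. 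The structural feature driving the quantitative bound is that the even cycles are bipartite and so admit the stronger, highly composable $\alpha$-type labeling techniques, whereas the odd cycles are not bipartite; it is this dichotomy, rather than the parity constraint alone, that makes the odd cycles the expensive case.

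I would first dispose of the path. A path of length $m$ carries the classical zig-zag graceful labeling $0, m, 1, m-1, 2, m-2, \ldots$, which uses the labels $\{0, 1, \ldots, m\}$ once each and realizes precisely the block of differences $\{1, 2, \ldots, m\}$. The idea is then to reserve a large top block of differences for the path and to feed the cycles, one group at a time, the smaller remaining differences. Since Definition \ref{def:GL} and the doubling construction of \cite{Buratti Traetta 12} are phrased exactly for unions of cycles with a path, I would set up an inductive statement whose invariant is a graceful labeling of the already-processed cycles together with a shortened path, realizing a prescribed interval of differences, the induction being carried out over the cycles with the path steadily consumed.

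The inductive step splits according to parity. The even cycles (indices in $I$), being bipartite, can be packed together efficiently: exploiting their bipartiteness one concatenates them with the path in the manner of $\alpha$-labelings, each consuming only a bounded amount of path length proportional to its own length plus a constant, so that all $|I|$ of them are absorbed essentially linearly. This is the source of the term $6\max(1, m_0)$ with $m_0 = 2|I|\max\{\ell_i + 3 \mid i \in I\} - 1$. The odd cycles ($i \notin I$) cannot be handled this way: each breaks bipartiteness and must be inserted by a dedicated gadget that splits its difference set across the path, and processing a single odd cycle forces the leftover path to be a bounded multiple---about $7$---longer than it was. Iterating over the $s - |I|$ odd cycles therefore multiplies the base requirement by $7^{s - |I| - 1}$, matching the shape of $m_1 = 7^{s - |I| - 1}\max\{2\ell_i + 1 \mid i \notin I\}$ and the coefficient $7$. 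The additive constant $9$ together with the safeguards $\max(1, \cdot)$ and $\max(3, \cdot)$ absorb the boundary cases (no even or no odd cycles, the shortest admissible path) and the seams between consecutive blocks.

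The step I expect to be the main obstacle is the odd-cycle gadget together with the global collision-freeness of the differences. Once a pattern is fixed, checking that an inserted block realizes its prescribed differences by a genuine walk with all labels distinct and inside $\{0, \ldots, \epsilon\}$ is routine; the delicate part is to choose the difference intervals and the per-cycle sign patterns so that every odd cycle's assigned differences still sum to an even number despite $\ell_i$ being odd, while the blocks for the different components, together with the path's top block, tile $\{1, \ldots, \epsilon\}$ exactly. Keeping these two requirements simultaneously satisfiable is precisely what forces the geometric growth hidden in $m_1$, and organizing the induction so that the leftover path at each stage is at once long enough to host the next gadget and short enough to be closed up gracefully is where the substance of the proof will lie.
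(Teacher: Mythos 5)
Your overall architecture coincides with the paper's: pack the even cycles together by concatenating $\alpha$-labelings (Theorem \ref{thm:main1:1}, which produces the linear term $m_0$), insert the odd cycles one at a time by a step that multiplies the accumulated size by about $7$ (Lemma \ref{lem:odd cycles} feeding the induction in Theorem \ref{thm:main2odd:1}, which produces $7^{s-|I|-1}$), and glue the two halves along the path (Lemma \ref{lem:extension3}). So the skeleton and the provenance of each term in the bound are correctly identified.

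The genuine gap is that the one step carrying essentially all of the difficulty --- the odd-cycle insertion gadget --- is never constructed; you explicitly defer it as ``the main obstacle''. In the paper this is Lemma \ref{lem:odd cycles}: starting from a $\GL(x,y)$ of $[L\mid m]_\epsilon$, the new $(2t+1)$-cycle is placed as $(-1,\epsilon+1,-2,\epsilon+2,\ldots,-t,\epsilon+t,-\epsilon-2)$, straddling the existing label interval so that it realizes the differences $\pm\II(\epsilon+1,\epsilon+2t)$ together with one unwanted extra difference $\pm(2\epsilon+t+2)$; that extra difference is cancelled by deleting the corresponding edge from an auxiliary $\alpha$-labeled path, and the fragments are rejoined by a three-edge matching whose differences are exactly the three values that would otherwise be missed. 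The resulting growth $m'=6\epsilon+2(x-y)+6$ (whence $\epsilon_{i+1}<7\epsilon_i$ and the factor $7$) depends on this specific geometry, and nothing in your sketch produces a candidate gadget or verifies that the difference blocks tile $\pm\II(1,\epsilon)$. Two further ingredients you omit are also load-bearing: (i) the induction must carry quantitative control of the end-vertices of the path at every stage (e.g.\ $x\le\min\bigl(\frac{\epsilon-1}{2},\epsilon-\ell\bigr)$ and $y-x\ge t+4$ in Theorem \ref{thm:main2odd:1}), without which the path-extension lemma (which requires $\mu\ge 2x+1$ and $\mu\ne 4x+1$) cannot be invoked at the next step --- your stated invariant records only the realized difference interval, which is not enough; and (ii) a stock of explicit base-case labelings for small odd cycles and small pairs of odd cycles (Lemmas \ref{lem:base case:1} and \ref{lem:base case:2}, and the sporadic cases inside Theorem \ref{thm:main2odd:1}) that the general gadget cannot reach. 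As written, the proposal is a correct road map for the paper's argument, but the substance of the proof is missing.
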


The graphs considered in Theorem \ref{mainGL} belong to the class $\mathcal{C}$ of graphs whose size equals the order minus one
(which is the minimum size for a graph to possibly have a graceful labeling).
The gracefulness (namely, the existence of a graceful labeling) of the graphs in  $\mathcal{C}$ has been widely studied, 
mainly focusing on trees for which Ringel and Kotzig posed the well-known 
graceful tree conjecture which, despite the efforts of many, remains open (see \cite{Ga}). 

The graphs for which we are interested in building graceful labelings are exactly those in 
$\mathcal{C}$ of maximum degree 2.
Their structure can be described in terms of the size of their components, 
which are cycles and exactly one path.
Pictorially, these graphs resemble a $1$ followed by a number of $0$s,
which represent the path and the cycle components, respectively. 
For this reason, 
we refer to them as \emph{zillion} graphs 
and denote the class of all such graphs by $\mathcal{Z}$.
Despite having an easy-to-describe structure, their gracefulness is still uncertain. 
Graceful labelings of zillion graphs with two components were built in \cite{Traetta 13} settling a conjecture posed by Frucht and Salinas  \cite{FS85} in 1985. 
As far as we know,
Theorem \ref{mainGL} represents the first result on the gracefulness of graphs in $\mathcal{Z}$ with more than two components. This result led us to make the following conjecture.


\begin{conj}\label{conjectureweak} There exists a polynomial $f\in\Z[x,y]$ 
such that
any zillion graph with $s$ cycles of maximum length $r$ and a path of length 
$\geq f(r, s)$ is graceful.
\end{conj}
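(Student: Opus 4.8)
The conjecture strengthens Theorem~\ref{mainGL} by demanding a \emph{polynomial} bound, whereas the bound of Theorem~\ref{mainGL} grows like $7^{s-|I|}r$, exponentially in the number $s-|I|$ of odd cycles. Since the even cycles already contribute only the polynomial term $m_0=O(sr)$, the plan is not to reprove Theorem~\ref{mainGL} but to excise the exponential dependence coming from the odd cycles. I would first pinpoint where the factor $7$ per odd cycle arises: in the underlying construction the odd cycles are incorporated \emph{one at a time}, and each insertion inflates the portion of the labeling already built, thereby producing geometric growth. The goal is to replace this iterative insertion with a single simultaneous placement of all odd cycles.

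Concretely, a graceful labeling of a zillion graph with $\epsilon = m + \sum_i \ell_i$ edges partitions the differences $\{1, 2, \ldots, \epsilon\}$ among its components, one per edge. Thus it suffices to split a suitable initial segment of $\{1, \ldots, \epsilon\}$ into blocks $B_1, \ldots, B_s$ with $|B_i| = \ell_i$, each $B_i$ realizable as the difference set of a graceful embedding of $C_{\ell_i}$, and to leave the remaining contiguous range for the path, which is the flexible component and can realize a long block of differences by a zig-zag. For even cycles this packing is essentially what an $\alpha$-labeling provides, which is why they are cheap; the entire difficulty is concentrated in the odd cycles.

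The main obstacle is the closure constraint forced by odd parity. Traversing $C_\ell$ with labels $a_0, \ldots, a_{\ell-1}$ gives edge differences $d_1, \ldots, d_\ell$ satisfying $\sum_j \varepsilon_j d_j = 0$ for suitable signs $\varepsilon_j \in \{\pm 1\}$, so each block $B_i$ must admit a balanced signing; in particular $\sum_{d \in B_i} d$ must be even. Odd cycles are not bipartite and hence carry no $\alpha$-labeling, so the clean composition machinery that tames the even cycles is unavailable, and the balanced-signing condition must be met \emph{simultaneously} for all odd cycles while the $B_i$ still tile a contiguous range. I expect that producing such a simultaneous balanced packing, with only polynomially many ``defect'' differences diverted into the path, is the crux. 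A promising tool is a Skolem- or Langford-type sequence, which packs differences into signed pairs at prescribed gaps: encoding each odd cycle as a short Skolem-type subproblem and concatenating them, at a bounded additive cost per cycle rather than a multiplicative one, would give a path of length $O(sr)$ and settle the conjecture.
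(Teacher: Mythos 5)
The statement you are addressing is a \emph{conjecture}: the paper does not prove it. What the paper actually establishes is Theorem \ref{mainGL}, whose bound is exponential in the number of odd cycles (the factor $7^{s-|I|-1}$ coming from Theorem \ref{thm:main2odd:1}, where Lemma \ref{lem:odd cycles} inserts the odd cycles one at a time and each insertion multiplies the current size $\epsilon$ by roughly $7$), together with Corollary \ref{cor:main1}, which settles Conjecture \ref{conjectureweak} with a quadratic $f$ only in the special case where every cycle is even. Your diagnosis of where the exponential growth originates is accurate, and your identification of the parity/closure obstruction for odd cycles (no $\alpha$-labeling, hence no clean composition) matches the reason the paper's even-cycle machinery of Section 4 does not extend. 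But what you have written is a programme, not a proof: the step ``produce a simultaneous balanced packing of all odd cycles with only polynomially many defect differences diverted into the path'' is precisely the content of the conjecture, and you leave it at ``I expect'' and ``a promising tool.'' Nothing in the proposal constructs the Skolem-type encoding, verifies it yields valid cycle labelings, or controls the path.

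There are also two substantive issues with the plan as stated. First, the balanced-signing condition on a block $B_i$ (that $\sum_{d\in B_i} d$ admits a $\pm1$ signing summing to $0$) is necessary but very far from sufficient: a graceful labeling additionally requires all vertex labels across \emph{all} components to be distinct and confined to $\II(0,\epsilon)$, and this global injectivity constraint is the real difficulty; Kotzig's obstruction quoted in the paper ($|E(\Gamma)|>\omega(\omega+2)$ for $\omega$ odd cycles) already shows that per-cycle difference bookkeeping cannot be the whole story. Second, insisting that the differences of each component form contiguous blocks tiling an initial segment is strictly stronger than what a graceful labeling requires, and even the paper's own constructions (e.g.\ the interleaved difference sets in Lemma \ref{lem:odd cycles} and Lemma \ref{lem:base case:2}) do not have this block structure; imposing it may make the packing problem harder rather than easier. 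Until the simultaneous placement is actually carried out with vertex-label disjointness verified, the conjecture remains open, as the paper states.
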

In fact, we actually believe that $f$ is quadratic. 
Theorem \ref{mainGL} proves this conjecture 
with a quadratic $f$
when the graph has only even cycles.
In this case we will give a slightly better bound, $f(r, s)$, in Corollary \ref{cor:main1}.

We actually believe that the following stronger version holds, 
where the function $f$ does not depend on the cycle lengths.
\begin{conj}\label{conjecturestrong} There exists a quadratic polynomial $f\in\Z[x]$ such that
any zillion graph with $s$ cycles and a path of length $\geq f(s)$ 
is graceful. Furthermore, $f$ is linear for bipartite zillion graphs.
\end{conj}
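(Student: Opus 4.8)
The plan is to reduce the gracefulness of a zillion graph to a \emph{block-sequencing} problem in which the cycle lengths enter only through the contents of self-contained blocks and never through the path-length budget. First I would separate the cycles by parity, writing the graph as the disjoint union of even cycles $C_{2\ell_1},\ldots$, odd cycles, and a path $P$ of length $m$, and dispose of the bipartite case first. When every cycle is even the whole graph is bipartite, and I would exploit the well-known compositional behaviour of $\alpha$-labelings: each even cycle carries an $\alpha$-labeling on a contiguous interval of labels whose width equals its own size, and $\alpha$-labelings of bipartite pieces can be concatenated with only a bounded amount of overhead per piece. Using the path of length $m$ as a flexible backbone whose own $\alpha$-labeling fills the residual differences, the number of labels that must be reserved for interfacing is constant per cycle, so an overhead linear in $s$ suffices. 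This is exactly the linear part of Conjecture~\ref{conjecturestrong}, and it is the part I expect to be routine once the composition lemma is set up correctly, since it removes the $\max\{\ell_i+3\}$ dependence still present in Theorem~\ref{mainGL}.

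For the general case the key structural idea is to label each cycle \emph{internally}, on a contiguous block of consecutive integers, so that the within-block edge differences are precisely the small differences up to the block width, independent of where the block is placed on the label line. With this normalization the only differences left to realize are the large ones, and these must be produced by the path edges together with the edges joining endpoints of distinct blocks. Because the internal structure of a block does not interact with its position, the path length required is governed by \emph{how many} blocks must be arranged, not by their sizes; this is the mechanism that is meant to eliminate the dependence on the $\ell_i$ entirely. The placement of the $s$ blocks and the path then becomes a sequencing problem of Skolem--Langford type: one must position the blocks along the integers so that the resulting inter-block jumps, read together with the path differences, cover every remaining large value exactly once. Such sequencing problems are solvable once there is enough linear room to separate the $s$ symbols, and the amount of room needed grows quadratically in the number of blocks, which is precisely the source of the quadratic $f(s)$ predicted by the conjecture.

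The decisive step, and the one I expect to be the main obstacle, is handling the odd cycles simultaneously rather than one at a time. An odd cycle admits no $\alpha$-labeling, so its internal block labeling necessarily carries a parity defect: its two vertex classes cannot be cleanly separated by a threshold, and inserting it perturbs the difference bookkeeping of everything already placed. In the construction underlying Theorem~\ref{mainGL} these defects are absorbed one odd cycle at a time, and each insertion multiplies the required path length by a constant factor, producing the exponential term $7^{s-|I|-1}$. To reach a quadratic bound one must instead correct all the parity defects at once: I would look for a single Skolem-type assignment that places every odd block together with a prescribed set of defect-correcting path edges, so that the net effect on the difference multiset is a uniform shift which the path can still absorb with only $O(s^2)$ slack. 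Designing this simultaneous correction, and proving that the combined system of difference constraints for arbitrarily many odd blocks remains solvable with room quadratic in $s$ and independent of the cycle lengths, is the crux on which the whole conjecture turns.
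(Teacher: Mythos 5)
The statement you are addressing is Conjecture \ref{conjecturestrong}: the paper offers no proof of it and explicitly presents it as open, so there is no argument of the authors to compare yours against. What the paper actually proves (Theorem \ref{mainGL}, Corollary \ref{cor:main1}) falls short of the conjecture in exactly the two places where your proposal is a plan rather than an argument, and in both places the gap is genuine. First, your bipartite case: you assert that each even cycle can be attached with a ``constant per cycle'' overhead, giving a bound linear in $s$ and independent of the $\ell_i$. But the paper's own bipartite construction (Theorem \ref{thm:main1:1}) allots each cycle a block of width roughly $2\max(L)+6$ and interleaves a segment of the path inside every block, which is why even Corollary \ref{cor:main1} requires $m\geq 2(|L|+1)(\max(L)+1)+3$ rather than something depending on $|L|$ alone. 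Your proposed normalization --- label each cycle on a contiguous block so that its internal edges realize a consecutive run of differences --- is impossible for a cycle of length $n\equiv 2\pmod 4$: summing the signed differences around the cycle gives $0$, so the sum of the absolute differences must be even, while $\sum_{j=d}^{d+n-1}j=nd+n(n-1)/2$ is odd for every $d$ when $n\equiv 2\pmod 4$ (this is Rosa's parity obstruction, the same one behind item (1) of the known results in Section 2.1.3). So the ``self-contained block whose position does not matter'' primitive does not exist for half of the even cycle lengths, and you have not supplied a substitute that avoids reintroducing the $\max(L)$ dependence.

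Second, and more decisively, the general case. You correctly diagnose that the exponential factor $7^{s-|I|-1}$ in Theorem \ref{mainGL} comes from inserting odd cycles one at a time (Lemma \ref{lem:odd cycles} roughly multiplies $\epsilon$ by $7$ at each step), and you correctly identify that a proof of the conjecture would need a single simultaneous placement of all odd blocks with a global, Skolem-type correction of their parity defects costing only $O(s^2)$. But you then state that designing this simultaneous correction ``is the crux on which the whole conjecture turns'' and leave it entirely unconstructed: no assignment is defined, no system of difference constraints is written down, and no solvability argument is given. A proof cannot delegate its central step to a named but unsolved combinatorial problem. As it stands your text is a reasonable research programme --- and its diagnosis of where Theorem \ref{mainGL} loses ground is accurate --- but it does not prove Conjecture \ref{conjecturestrong}, nor even recover the weaker bounds the paper does establish.
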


In \cite[Theorem 9]{Ko84} Kotzig gave a lower bound on the number of vertices for a 2-regular graph plus an isolated vertex to be graceful in terms of the number of odd cycles. Specifically, he showed that $|E(\Gamma)| \geq \omega(\omega+2)$, where $\omega$ is the number of odd cycles. 
By adapting Kotzig's proof, one can
show that if a zillion graph $\Gamma$ contains $\omega\leq s$ odd length cycles, 
then for $\Gamma$ to be graceful we must have $|E(\Gamma)|> \omega(\omega+2)$.
In light of this, we propose the following conjecture on the size of $\Gamma$.

\begin{conj}\label{conjecturestrongest} 
There exist a quadratic polynomial $f\in\Z[x]$ and a linear polynomial $g\in\Z[x]$ 
such that
any zillion graph $\Gamma$ with $\omega$ odd cycles and $\mu$ even cycles 
is graceful whenever $|E(\Gamma)|\geq f(w) + g(\mu)$.
\end{conj}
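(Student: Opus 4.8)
The plan is to strengthen the construction behind Theorem~\ref{mainGL} so that the contribution of the odd cycles to the required size drops from exponential to quadratic, while the contribution of the even cycles stays linear and the dependence on the individual cycle lengths is removed altogether. Since Kotzig's inequality $|E(\Gamma)|>\omega(\omega+2)$ shows that a quadratic dependence on $\omega$ is unavoidable, the polynomial $f$ is forced to be at best quadratic, and the entire objective is to meet this necessary condition. I would therefore aim to build an explicit graceful labeling in which the long path acts as a universal realizer of the top interval of differences, while each cycle is assigned a reserved block of consecutive differences whose size scales with its length but whose interaction with the other blocks does not.

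First I would dispose of the even cycles. Being bipartite, they are amenable to $\alpha$-labeling techniques (with a bounded correction for the lengths $\equiv 2\pmod 4$, which admit no genuine $\alpha$-labeling), and such labelings compose additively along a path: inserting one even cycle costs only a bounded amount of extra path length, independent of that cycle's length. This yields the linear term $g(\mu)$ exactly as predicted by the bipartite clause of Conjecture~\ref{conjecturestrong}, and is essentially the $m_0$-part of the argument for Theorem~\ref{mainGL} with the length dependence stripped out by supporting each even cycle on a consecutive block of integers and placing these blocks side by side rather than nesting them.

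The heart of the matter, and the step I expect to be the main obstacle, is the simultaneous treatment of the odd cycles. The exponential factor $7^{s-|I|-1}$ in Theorem~\ref{mainGL} arises because the odd cycles are inserted one at a time, each insertion stretching a previously reserved segment by a constant factor; to get a quadratic bound one must instead pack all $\omega$ odd cycles into a single labeling at once, so that the reserved lengths add rather than multiply. Since the $i$-th odd cycle can be arranged to consume only $O(i)$ extra room, the total becomes $O(\omega^2)$, in agreement with the Kotzig threshold. Concretely, I would encode the odd cycles' difference demands as a Skolem/Langford-type partition of an interval of integers and then graft the resulting partial labelings onto the path. The difficulty is precisely the consistency of this global partition: when several odd cycles are labeled simultaneously their required differences compete for the same small integers, and guaranteeing a conflict-free assignment for arbitrary odd lengths --- rather than sidestepping the conflicts through costly recursive insertion --- is the genuinely hard point. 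Controlling these interactions uniformly in the cycle lengths is exactly what currently keeps the statement at the level of a conjecture.

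Finally, I would assemble the three pieces --- the $\alpha$-labeled even cycles, the Skolem-packed odd cycles, and the path realizing the top block of differences --- into one graceful labeling and check that the total reserved length is bounded by $f(\omega)+g(\mu)$ with $f$ quadratic and $g$ linear, both free of the $\ell_i$. Recasting this bound on the path length in terms of $|E(\Gamma)|=m+\sum_i\ell_i$ then gives the stated form, with long cycles contributing enough to $|E(\Gamma)|$ to make the inequality automatic in the flexible regime. The decisive gap to close remains the odd-cycle packing described in the third paragraph.
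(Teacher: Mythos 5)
This statement is Conjecture~\ref{conjecturestrongest}: the paper offers no proof of it, and explicitly presents it as open. Your text is therefore not competing with a proof in the paper but attempting to supply one, and it does not: it is a research plan whose central step is left unexecuted, as you yourself concede at the end of your third paragraph.

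Two gaps are genuine. First, the odd-cycle packing. The assertion that ``the $i$-th odd cycle can be arranged to consume only $O(i)$ extra room'' is exactly the content that would need to be proved, and nothing in the paper or in your sketch supports it; the paper's actual mechanism (Lemma~\ref{lem:odd cycles} iterated in Theorem~\ref{thm:main2odd:1}) multiplies the reserved length by a factor of about $7$ at each insertion, which is why the bound $7^{s-|I|-1}(2\max(L)+1)$ is exponential. A Skolem/Langford-style simultaneous assignment of difference blocks to all odd cycles at once is a plausible idea, but you give no construction, no verification that the partial labelings of distinct odd cycles can coexist on disjoint vertex sets with disjoint difference sets, and no argument that the resulting path fragments can be stitched into a single $m$-path realizing the remaining differences. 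Second, the even-cycle part overreaches: you claim each even cycle costs only a \emph{bounded} amount of extra path length, independent of its length, but the paper's Theorem~\ref{thm:main1:1} gives a cost of roughly $2(\max(L)+3)$ per even cycle, i.e.\ linear in the longest even cycle length. Removing that dependence is itself part of Conjectures~\ref{conjecturestrong} and~\ref{conjecturestrongest} and is not established anywhere. (Your appeal to Kotzig's inequality correctly explains why $f$ must be at least quadratic, but a lower bound on $f$ is of course no help in proving the existence statement.) In short: the proposal correctly diagnoses where the difficulty lies, but both the quadratic odd-cycle bound and the length-free even-cycle bound remain unproved, so the conjecture stands.
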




This paper is structured as follows. In Section 2 we recall the basic concepts of a graceful labeling and an $\alpha$-labeling which are both slightly generalized, 
and some related results. We also recall the concept of a starter which we use to construct pyramidal solutions to $OP^\s(\lambda, F)$ and explain in detail how we use a doubling construction defined in \cite{Buratti Traetta 12}. 
In Section 3 we provide some extension lemmas for graceful and $\alpha$-labelings which will be used in 
Sections 4 and 5 where we build $\alpha$-labelings and graceful labelings, respectively, for graphs in $\mathcal{C}_2$ whose path-component is
sufficiently long (see Theorem \ref{mainGL}). 
New results on the solvability of  $OP^\s(\lambda, F)$ are then given in Section 6 (see Theorem \ref{mainOP}). The paper ends with some concluding remarks.

\section{Preliminaries}
Given two integers $a$ and $b$, we denote by $\II(a, b)$ the interval of all integers $x$ such that $a \leq x \leq b$. Of course, if $b<a$, then $\II(a, b)$ will be the empty set. Finally, for $a \in \Z$ and 
$X\subseteq \Z$, we set $aX=\{ax\mid x \in X\}$.

Let $\D$ be an integral domain and let $\Gamma$ be a simple graph with vertices in $\D\ \cup\ \{\infty\}$ where 
$\infty\not\in \D$. 
As usual, the {\em list of differences of $\Gamma$} is the multiset $\Delta \Gamma$ of all differences 
$x-y$ with $(x,y)$ an ordered pair of adjacent vertices of $\Gamma$ and $x\neq\infty\neq y$. 
Also, given $a,b\in \D$, with $a\neq 0$,
we can define the graph $a\Gamma + b$ obtained from $\Gamma$ by replacing each vertex $x\neq\infty$ with $ax+b$. This new graph is isomorphic to $\Gamma$ and satisfies the following properties:
\begin{align*}
  V(a\Gamma+b) = aV(\Gamma)+b \;\;\text{and} \;\; \Delta(a\Gamma+b)& =a \Delta \Gamma;
\end{align*}
We point out that the integral domains considered in this paper will be either the ring of integers $\Z$ or the ring $\Z_n$ of integers modulo $n$.

\subsection{Labelings of zillion graphs}
In this section we recall the basic concept of labeling and some well known results on graceful labelings and 
$\alpha$-labelings of a zillion graph, which we recall is any $2$-regular graph with a vertex removed. Since the components of such a graph are cycles and a 
path, its structure is determined by the lengths of its components which form a multiset.
Therefore, to avoid confusion, we introduce here the notation on multisets which we use throughout the paper.

We use the left exponential notation and denote by 
$L=\{^{a_1}\ell_1, \;^{a_2}\ell_2,\; \ldots, \;^{a_t}\ell_t\}$ the multiset containing 
$a_i$ copies of the integer $\ell_i$, for $1\leq i\leq t$; also, we denote by $|L| = \sum_{i=1}^t a_i$ the cardinality of $L$, and by $\max(L)$ and $\min(L)$ the maximum and the minimum element of $L$, respectively, unless $L$ is empty in which case we set $\max(L) = 0 = \min(L)$.
Furthermore, for every $b\in\mathbb{N}$ and $q, r\in \mathbb{Q}$, we let
\begin{itemize}
  \item[$-$] $^b L = \{^{a_1b}\ell_1, \;^{a_2b}\ell_2,\; \ldots, \;^{a_tb}\ell_t\}$;
  \item[$-$] $qL + r = \{^{a_1}(q\ell_1+r), \;^{a_2}(q\ell_2+r),\; \ldots, \;^{a_t}(q\ell_t+r)\}$;
\end{itemize}
When $L = \;\{^a \ell\}$, we simply write $^a \ell$ in place of $L$, 
and if $a=1$, we write $\ell$ in place of $\;^a\ell$.

Given a non-negative integer $m$ and a list 
$L_1,  L_2, \ldots, L_t$, where each $L_i$ is a  multiset of integers $\geq 2$,
we denote by 
\begin{itemize}
  \item[$-$] $[L_1, L_2, \ldots, L_t]$ the graph containing
an $\ell$-cycle for every $\ell~\in~\bigcup_{i=1}^t L_i$;
  \item[$-$] $[L_1, L_2, \ldots, L_t\mid m]$  the vertex-disjoint union of 
  the graph $[L_1,$ $L_2, \ldots, L_t]$ and a path with $m\geq 0$ edges (when $m=0$, this path is an isolated vertex).
\end{itemize}
Clearly, $[L_1, L_2, \ldots, L_t\mid m]$ can be obtained from the $2$-regular graph 
$[L_1, L_2, \ldots,$ $L_t,  m+2]$ by removing a vertex from the $(m+2)$-cycle.

We write $[L_1, L_2, \ldots, L_t]_\epsilon$ (resp.\ $[L_1, L_2, \ldots, L_t\mid m]_\epsilon$) whenever we need to
specify the size $\epsilon$ of the related graph; clearly, $\epsilon$ is the sum of all cycle and path lengths, hence
$\epsilon$ (resp. $\epsilon$+1) is the order of $[L_1, L_2, \ldots, L_t]_\epsilon$ (resp.\ $[L_1, L_2, \ldots, L_t\mid m]_\epsilon$).
Note that $[\ell]$ denotes 
a cycle of length $\ell$ (an \emph{$\ell$-cycle}), while by $[\;-\mid m]$ we
denote a path with $m$-edges (an \emph{$m$-path}).\\

In this paper, we only deal with labelings of 
$[L \mid m]$ over $\Z$, where $L$ is a multiset of integers $\geq2$.
A \emph{labeling} of $[L \mid m]$ is simply a graph $\Gamma$ 
isomorphic to  $[L \mid m]$ with vertices in $\Z$.

As shown in \cite[Theorem 3.2]{Buratti Traetta 12} (with a slightly different terminology)
labelings over $\Z$ can be used to construct solutions to $OP(2,F)$. More precisely, we have the following.

\begin{thm}\label{1rotsols2}
If there is a labeling $\Gamma$ of $[\ell_1, \ell_2, \ldots, \ell_t \mid \ell^*]_{n-1}$ such that\
\begin{equation}\label{1rotsols2:cond}
  \text{$V(\Gamma) = \{0,1, \ldots, n-1\}$,\; and\; $\Delta \Gamma = \;^2\{1, 2, \ldots, n-1\}\pmod{n}$,}
\end{equation}
then there exists a $1$-rotational solution to $OP(2, [\ell^*+2, \ell_1, \ell_2, \ldots, \ell_t])$.
\end{thm}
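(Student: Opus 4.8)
The plan is to realize the desired factorization as a single $\Z_n$-orbit of a starter $2$-factor, using the classical difference method. First I would set up the $1$-rotational framework: take the vertex set to be $\Z_n\cup\{\infty\}$ with $\infty\notin\Z_n$, and let $\Z_n$ act by translation $x\mapsto x+i$ on its own elements while fixing $\infty$. Since $F=[\ell^*+2,\ell_1,\ldots,\ell_t]$ has order $(\ell^*+2)+\sum_i\ell_i=(n-1)+2=n+1$, the relevant graph is $(2K_{n+1})^0=2K_{n+1}$, so I must decompose $2K_{n+1}$ into $n$ copies of $F$, and the proposed vertex set $\Z_n\cup\{\infty\}$ has the correct cardinality $n+1$.

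Next I would build the starter. Writing the path-component of $\Gamma$ as $P=v_0,v_1,\ldots,v_{\ell^*}$, I form $F_0$ from $\Gamma$ by adjoining $\infty$ and joining it to the two ends $v_0$ and $v_{\ell^*}$; this turns $P$ into the cycle $(\infty,v_0,\ldots,v_{\ell^*})$ of length $\ell^*+2$ while leaving the $t$ cycles of $\Gamma$ untouched, so $F_0\simeq F$ and $F_0$ is a genuine $2$-factor on $\Z_n\cup\{\infty\}$. I then set $\mathcal{F}=\{F_0+i\mid i\in\Z_n\}$, where each $F_0+i$ is isomorphic to $F$ because translation fixes $\infty$ and preserves all vertex degrees.

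The heart of the argument is checking that $\mathcal{F}$ partitions the edges of $2K_{n+1}$, which I would split into the $\infty$-edges and the $\Z_n$-edges. The two edges of $F_0$ incident with $\infty$ are $\{\infty,v_0\}$ and $\{\infty,v_{\ell^*}\}$; as $i$ runs over $\Z_n$, each of the two orbits $\{\{\infty,v_0+i\}\}$ and $\{\{\infty,v_{\ell^*}+i\}\}$ covers every edge $\{\infty,x\}$ exactly once, so each $\infty$-edge is covered with the required multiplicity $2$. The edges of $F_0$ not meeting $\infty$ are exactly the edges of $\Gamma$, whose difference list is $\Delta\Gamma={}^2\{1,\ldots,n-1\}\pmod n$ by hypothesis; the standard fact that the $\Z_n$-translates of an edge of difference $d$ sweep out every edge of that difference exactly once then shows that, for each $d\in\{1,\ldots,n-1\}$ with $d\neq n/2$, the two edges of $\Gamma$ realizing the difference $\pm d$ cover each edge $\{x,x+d\}$ of $2K_n$ exactly twice, matching the multiplicity in $2K_{n+1}$.

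The step I expect to need the most care is the value $d=n/2$ when $n$ is even: here a single edge of difference $n/2$ contributes the value $n/2$ \emph{twice} to $\Delta\Gamma$, so the multiplicity $2$ forces exactly one such edge, whose $\Z_n$-orbit already covers each of the $n/2$ edges $\{x,x+n/2\}$ exactly twice; I would treat this case separately to confirm the count still matches. Once the edge partition is verified, $\mathcal{F}$ is by construction invariant under the translation action of $\Z_n$, which fixes $\infty$ and is sharply transitive on the remaining $n$ vertices, so $\mathcal{F}$ is a $1$-rotational solution to $OP(2,F)$, completing the proof. A possible coincidence $F_0+i=F_0$ for $i\neq 0$ is harmless here, since for $\lambda>1$ repeated factors are permitted and the difference count already guarantees the correct edge multiplicities.
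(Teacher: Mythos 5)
Your proposal is correct and follows essentially the same route as the paper: adjoin $\infty$ to the two ends of the path to form a starter $2$-factor isomorphic to $[\ell^*+2,\ell_1,\ldots,\ell_t]$, then take the list of all $n$ translates under $\Z_n$, with the difference condition $\Delta\Gamma={}^2\{1,\ldots,n-1\}\pmod n$ guaranteeing that every edge of $2K_{n+1}$ is covered with the right multiplicity. The paper's proof is just a terser version of yours, leaving the orbit-counting (including the $d=n/2$ case you rightly flag) implicit.
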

\begin{proof} 
Let $\Gamma$ be a labeling of $[\ell_1, \ell_2, \ldots, \ell_t \mid \ell^*]_{n-1}$ satisfying \eqref{1rotsols2:cond}. By joining the ends of the path in $\Gamma$ with $\infty$, we obtain
a $2$-regular graph $\Gamma^*$ isomorphic to $[\ell^*+2, \ell_1, \ell_2, \ldots, \ell_t]$.
If we consider the vertices of $\Gamma^*$ modulo $n$, 
then $V(\Gamma^*) = \Z_{n} \ \cup\ \{\infty\}$, and by \eqref{1rotsols2:cond}
it follows that $\Delta \Gamma^* = \;^2(\Z_{n}\setminus\{0\}$). This guarantees that  
the list $\{\Gamma + i\mid  i\in \Z_n\}$ of all translates of 
$\Gamma$ is a solution to $OP(2, [\ell^*+2, \ell_1, \ell_2, \ldots, \ell_t])$, which is 
$1$-rotational by construction.
\end{proof}

Two important classes of labelings satisfying condition \eqref{1rotsols2:cond} are represented by
graceful labelings and $\alpha$-labelings, both introduced by A. Rosa in \cite{Ro66}
(see \cite{Ga} for a dynamic survey on the topic). 
In order to develop a method which allows us to construct graceful or $\alpha$-labelings of $[L_0, L_1\mid \mu]$
starting from suitable labelings of $[L_0\mid m_0]$ and $[L_1\mid m_1]$, in the following two sections
we slightly generalize the two concepts of graceful and $\alpha$-labelings. 
%

\subsubsection{Graceful labelings}\label{subsec:beta}
Using our terminology, we generalize the concept of a graceful labeling of $[L\mid m]$ to include the
case where $[L\mid m]$ contains exactly one $2$-cycle.

\begin{defn}\label{def:GL}
Let $J$ be an interval of $\Z$ of length $\epsilon+1$ and let $x,y\in J$.
Also, we assume that $[L\mid m]_\epsilon$ contains at most one $2$-cycle.

A \emph{graceful labeling} $(\GL)$ of $[L \mid m]_\epsilon$ with \emph{parameter set} $(J, x, y)$
-- briefly, a $\GL(J, x, y)$ of $[L \mid m]_\epsilon$ -- 
is a labeling $\Gamma$ of $[L \mid m]_\epsilon$
which satisfies the following three conditions:
\begin{enumerate}
  \item $V(\Gamma)= J$;
  \item $x$ and $y$ are the end-vertices of the $m$-path in $\Gamma$;
  \item 
  $  
  \Delta \Gamma = 
  \begin{cases} 
    \pm \II(1, \epsilon) & \text{if $[L \mid m]$ has no $2$-cycle},\\
    \pm \{1, \;^23\}\ \cup\ \pm\II(4, \epsilon) & \text{if $[L \mid m]$ has exactly one $2$-cycle}.  
  \end{cases}
  $
\end{enumerate}
\end{defn}


If the vertex set $J=\II(0,\epsilon)$, we will often drop the parameter $J$ and simply speak of a graceful labeling or a $\GL(x,y)$ of $[L \mid m]$. Note that when $J=\II(0,\epsilon)$ and $[L\mid m]$ has no 
$2$-cycle, we obtain the classic definition of a graceful labeling (see \cite{Ga}). 

\begin{rem}\label{rem:GL1}
  If $[L\mid m ]$ contains exactly one $2$-cycle, 
  the choice of missing differences $\pm2$ and having differences $\pm3$ appear twice 
  in a graceful labeling $\Gamma$ of $[L\mid m ]$ is somewhat arbitrary, 
  but this choice will prove convenient later on.
  Further,
  the $2$-cycle $C$ in $\Gamma$ must have the form $C = (a, a+3)$ for some $a\in\Z$. Hence 
  the differences $\pm 3$ (with multiplicity $2$) appear only in $C$.
\end{rem}

\begin{rem}\label{rem:GL2} Given a $\GL(J, x, y)$ of $[L \mid m]$, 
the translate $\Gamma + z$ of $\Gamma$ by $z\in\Z$
is clearly a $\GL(J+z, x+z, y+z)$ of $[L \mid m]$. Therefore, if 
$J = \II(u, u + \epsilon)$, then the graph $\Gamma - u$ 
is a $\GL(x - u, y - u)$ of $[L \mid m]$.
\end{rem}

\subsubsection{$\alpha$-labelings}\label{subsec:alpha}
The graph $[L \mid m]$ is bipartite if and only if all its  cycles have even length. 
In this case, the original concept of an $\alpha$-labeling of $[L \mid m]$ can be generalized as follows.
\begin{defn}\label{def:AL} 
Let $J_i = \II(w_i, z_i)$, for $i=1, 2$, such that 
\[ z_1 < w_2\;\;\; \text{and}\;\;\; |J_1| - |J_2|\in\{-1,0,1\}.\] 
Also, let $x,y\in J_1\cup J_2$ and set $\epsilon = |J_1| + |J_2| - 1$.

An \emph{$\alpha$-labeling} $(\AL)$ of a bipartite graph $[L \mid m]_\epsilon$ 
with \emph{parameter set} $(J_1, J_2, x, y)$ -- briefly, an $\AL(J_1, J_2, x, y)$ of $[L \mid m]_\epsilon$ -- 
is a labeling $\Gamma$ of  $[L \mid m]$ 
which satisfies the following three conditions:
\begin{enumerate}
  \item the partite sets of $\Gamma$ are $J_1$ and $J_2$;
  \item $x$ and $y$ are the end-vertices of the $m$-path in $\Gamma$;
  \item $\Delta \Gamma = \pm \II\big(w_2 - z_1, z_2 - w_1\big)$.
\end{enumerate}
\end{defn}
Note that by condition \ref{def:AL}.(3), the $\alpha$-labeling $\Gamma$ has no repeated differences, hence 
$[L\mid m]$ has no repeated edges, that is, $[L\mid m]$ has no $2$-cycle.

When $w_2=z_1+1$, namely, when the two intervals $J_1$ and $J_2$ are consecutive, then the $\alpha$-labeling $\Gamma$ is also a graceful labeling.
Furthermore, if $w_1=0$ and $w_2=z_1+1$, that is, $J_1\ \cup\ J_2 = \II(0, \epsilon)$, 
we obtain the classic definition of an $\alpha$-labeling (see \cite{Ga}).  
Whenever we do not specify the intervals $J_1$ and $J_2$ and simply speak of an 
$\alpha$-labeling or an $\AL(x,y)$ of $[L \mid m]_\epsilon$, 
it will be understood that
\[
  J_1 = \II\left(0, \left\lfloor \frac{\epsilon - 1}{2} \right\rfloor \right)\;\;\; \text{and}\;\;\; 
  J_2 = \II\left(\left\lfloor \frac{\epsilon + 1}{2} \right\rfloor, \epsilon\right).
\]  

Let $\Gamma$ be as in Definition \ref{def:AL}. Given two integers  $a_1$ and $a_2$ such that 
\[ \text{either}\;\; a_1 < a_2 + w_2 - z_1\;\; \text{or}\;\; a_1>a_2 + z_2 - w_1.\]
we define $\Gamma + (a_1,a_2)$ to be the graph obtained from $\Gamma$ by replacing each vertex 
$u\in J_i$ with $u+a_i$, for $i=1,2$. Clearly, $\Gamma + (a_1,a_2)$ is isomorphic to $\Gamma$, and
it is not difficult to check that:
\begin{enumerate}
  \item[$a.$] $\Gamma + (a_1,a_2)$ is a bipartite graph whose parts are $J_i + a_i$, for $i=1,2$;
  \item[$b.$] $\Delta \big(\Gamma + (a_1,a_2)\big) = 
    \pm \II(w_2 - z_1 + a_2 - a_1, z_2 - w_1 + a_2-a_1)$;
\end{enumerate}
Hence, $\Gamma+(a_1, a_2)$ is an $\AL(J_1 + a_1, J_2 + a_2, x', y')$ of $[L \mid m]$, for suitable integers 
$x'$ and $y'$. When $a_1=a_2$, then $\Gamma+(a_1, a_2) = \Gamma+a_1$.

\begin{rem}\label{rem: AL} If $\Gamma$ is an $\alpha$-labeling of $[L\mid m]_\epsilon$ with parameter set 
$(J_1, J_2, x, y)$ (Definition \ref{def:AL}), then 
$\Gamma' = \Gamma + (-w_1, z_1-w_1-w_2+1)$ is an 
$\AL(J'_1, J'_2, x', y')$ of $[L\mid m]_{\epsilon}$ for suitable integers $x'$ and $y'$, where
\[
J'_1 = J_1 - w_1 = \II(0, z_1-w_1),\; J'_2 = J_2 + (z_1-w_1-w_2+1) = \II(z_1-w_1 + 1, \epsilon).
\] 
\end{rem}

\subsubsection{Known results on graceful and $\alpha$-labelings of $[L \mid m]$}

There are many results on the existence of graceful labelings of 
$[L \mid m]$, where $L$ is a list of integers greater than $2$ (see \cite{Ga}). 
Just to mention a few, we recall the following.
\begin{enumerate}
\item If $[\ell_1, \ell_2, \ldots, \ell_t\mid 0]$ has a graceful labeling, then 
      $\sum_{i=1}^t \ell_i \equiv 0$ or $3 \pmod{4}$~\cite{Ro66}.
\item The above necessary condition is also sufficient when $t=1$ \cite{Ro66} or $t=2$~\cite{AbKo96}.
\item If $m\geq 1$, then $[\ell\mid m]$ has a graceful labeling if and only if $\ell + m \geq 6$ \cite{Traetta 13}.
\end{enumerate}

The following lemma generalizes a result by Abrham \cite{Ab93} concerning the
existence of an $\alpha$-labeling of a path with a given end-vertex.

\begin{lem}\label{paths} Let $J_i = \II(w_i, w_i+\gamma_i)$ be a nonempty interval, for $i=1,2$, and assume that 
$w_1+\gamma_1<w_2$ and $|\gamma_1 -\gamma_2|\leq 1$.
Then there is an $\AL(J_1, J_2, x, y)$ of $[\;-\mid \gamma_1 + \gamma_2 - 1]$ in each of the following cases:
\begin{enumerate}
\item $(x, y)=(w_1+i, w_1+\gamma_1-i)$ 
if $\gamma_1 = \gamma_2+1$, and $i\in\II(0,\gamma_1)\setminus\{\gamma_1/2\}$;
\item $(x, y)=(w_1+i, w_2+i)$  if $\gamma_1= \gamma_2$, and $i\in\II(0,\gamma_1)$;
\item $(x, y)=(w_2+i, w_2+\gamma_2-i)$ if $\gamma_1 = \gamma_2-1$, and $i\in\II(0,\gamma_2)\setminus\{\gamma_2/2\}$.
\end{enumerate}
\end{lem}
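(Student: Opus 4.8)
The plan is to regard an $\AL(J_1,J_2,x,y)$ of the path as a Hamiltonian path on the vertex set $J_1\cup J_2$ that alternates between the two parts, and to reduce condition \ref{def:AL}.(3) to a statement purely about the \emph{indices} of the vertices. Write the elements of $J_1$ as $p_j=w_1+j$ with $0\le j\le\gamma_1$ and those of $J_2$ as $q_k=w_2+k$ with $0\le k\le\gamma_2$. Every edge joins some $p_j$ to some $q_k$, and its difference is $q_k-p_j=(w_2-w_1)+(k-j)$. Hence condition \ref{def:AL}.(3), namely $\Delta\Gamma=\pm\II(w_2-z_1,z_2-w_1)$, is equivalent to requiring that the \emph{index-differences} $k-j$, taken over all edges, realize each value of $\II(-\gamma_1,\gamma_2)$ exactly once. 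This eliminates $w_1,w_2$ and the gap $w_2-z_1$ from the problem: it suffices to build, on the abstract bipartite vertex set $\{p_0,\dots,p_{\gamma_1}\}\cup\{q_0,\dots,q_{\gamma_2}\}$, an alternating Hamiltonian path whose index-differences exhaust $\II(-\gamma_1,\gamma_2)$ and whose ends are the prescribed vertices; the labeling on the actual intervals is then recovered via $p_j\mapsto w_1+j$ and $q_k\mapsto w_2+k$.

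Before constructing the path I would exploit two symmetries of this index formulation. First, reversing a path interchanges its two ends, so only the unordered pair $\{x,y\}$ matters; in cases 1 and 3 this lets me assume $i$ is at most half the relevant $\gamma$. Second, the index-complement $p_j\mapsto p_{\gamma_1-j}$, $q_k\mapsto q_{\gamma_2-k}$ negates every index-difference and so preserves $\II(-\gamma_1,\gamma_2)$ up to sign; in case 2 it sends the parameter $i$ to $\gamma-i$, again allowing $i\le\gamma/2$. Most usefully, interchanging the roles of the two parts (reading $J_2$ as the ``bottom'' part) carries the index problem with parameters $(\gamma_1,\gamma_2)=(\gamma,\gamma+1)$ and both ends among the $q$'s into the index problem with parameters $(\gamma+1,\gamma)$ and both ends among the $p$'s, with the same $i$. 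Thus case 3 follows formally from case 1, and only cases 1 and 2 require an explicit construction.

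For the construction I would start from the classical \emph{monotone zigzag} $p_0,q_{\gamma_2},p_1,q_{\gamma_2-1},p_2,q_{\gamma_2-2},\dots$, in which consecutive edges have index-differences $\gamma_2,\gamma_2-1,\dots,-\gamma_1$, so each value of $\II(-\gamma_1,\gamma_2)$ occurs exactly once; a direct check shows it visits every vertex and places its ends exactly at the prescribed vertices when $i=0$, which is essentially Abrham's construction. For a general admissible $i$ I would modify this zigzag so that it begins at the prescribed vertex, by permuting the order in which the index-differences are used: following the recursion ``from $p_j$ with difference $d$ proceed to $q_{j+d}$, and from $q_k$ with difference $d$ proceed to $p_{k-d}$'', a difference sequence determines a walk, and I would choose the sequence so that the walk starts and ends at the two prescribed vertices while still using each difference once. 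I would then verify the three conditions of Definition \ref{def:AL}: that the running indices stay inside $\{0,\dots,\gamma_1\}$ and $\{0,\dots,\gamma_2\}$ and are pairwise distinct (so the walk is an alternating Hamiltonian path), that its ends are $x$ and $y$, and that the difference sequence is a permutation of $\II(-\gamma_1,\gamma_2)$ (a telescoping check).

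The main obstacle is this last construction step: producing, uniformly in $i$, a single difference sequence whose partial sums keep the two index-streams inside their ranges, pairwise distinct and exhaustive, while seating the prescribed vertices at the two ends. This is where the parity distinction between the three cases enters, and where the excluded value $i=\gamma_1/2$ (resp.\ $\gamma_2/2$) must be discarded, since it would force the two prescribed ends to coincide. Once the correct family of difference sequences is pinned down, conditions \ref{def:AL}.(1)--(3) reduce to routine telescoping identities, and the symmetry reductions of the second paragraph keep the number of explicit constructions to a minimum.
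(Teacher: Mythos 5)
Your reduction to index-differences is sound and is in substance the same normalization the paper performs: the paper proves the general case from the case $w_1=0$, $w_2=\gamma_1+1$ by applying the shift $\Gamma_0+(w_1,\,w_2-\gamma_1-1)$, which is exactly your observation that only the differences $k-j$ of indices matter and must exhaust $\II(-\gamma_1,\gamma_2)$. Your symmetry reductions (reversal, index-complementation, and swapping the two parts to derive case 3 from case 1) are also correct, and your explanation of the excluded value $i=\gamma_1/2$ (the two prescribed ends would coincide) is right.

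The problem is that after these reductions nothing of the lemma has actually been proved. The entire content of the statement is the existence, for \emph{every} admissible $i$, of an alternating Hamiltonian path on the two index sets whose difference sequence is a permutation of $\II(-\gamma_1,\gamma_2)$ and whose ends sit at the prescribed positions; you produce this only for $i=0$ (the monotone zigzag) and then say you ``would modify this zigzag by permuting the order in which the index-differences are used,'' explicitly flagging that step as ``the main obstacle.'' That obstacle is the theorem. The required constructions are genuinely delicate --- they involve parity case distinctions and splicing two zigzag segments so that the partial sums stay in range, remain distinct, and exhaust the difference set --- and this is precisely the content of Abrham's result in \cite{Ab93}, which the paper's proof invokes for the normalized case rather than reproving. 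To make your argument complete you must either cite Abrham's theorem for general $i$ (not merely remark that the $i=0$ zigzag ``is essentially Abrham's construction'') or write down and verify the explicit difference sequences for each residue of $i$; as it stands the proposal is a correct reduction followed by an acknowledged gap exactly where the work lies.
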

\begin{proof} When $w_1=0$ and $w_2 = \gamma_1+1$, the existence of an $\alpha-$labeling, say $\Gamma_0$,
of $[-\mid m]$ satisfying the assertion is proven in \cite{Ab93}. In the general case, 
the graph $\Gamma_0 + (w_1, w_2 - \gamma_1 - 1)$ provides the desired $\alpha$-labeling.
\end{proof}

\subsection{Basics on $2$-starters of $\Z_{2n}$}
The concept of a  \emph{$2$-starter} (and more generally of a \emph{$k$-starter}) over an arbitrary group $G$ was formally introduced in \cite{Buratti Rinaldi 08} to characterize the solutions to $OP$ that are  $1$-rotational over $G$. 
They show, among other things, that a necessary condition for the existence of a $1$-rotational solution to $OP(F)$ over a group $G$ of even order
is that all involutions of $G$ (i.e., the elements of order $2$) are pairwise conjugate. Indeed, it is shown that many groups with this property give rise to $1$--rotational solutions enabling the authors to conjecture that such a necessary condition is also sufficient. A wider class of groups supporting the conjecture can be found in \cite{Tr10}.

In this paper we only deal with $2$-starters over $\Z_{2n}$ whose definition is the following.

\begin{defn}\label{2-starter}
  A \emph{$2$-starter} (over $\Z_{2n}$) is a $2$-regular graph $\Sigma$
  with vertex-set $V(\Sigma) = \Z_{2n}\ \cup\ \{\infty\}$ such that
  \begin{enumerate}
    \item\label{2-starter: item 1} $\Delta \Sigma \supset \Z_{2n}\setminus\{0\}$, and
    \item\label{2-starter: item 2} $\Sigma + n = \Sigma$.
  \end{enumerate}
If $\Sigma \simeq [\ell_\infty, \ell_1, \ldots, \ell_t]$ and $\ell_\infty$ is the length of the cycle through $\infty$, 
we speak of an $\left[\underline{\ell_\infty}, \ell_1, \ldots, \ell_t\right]$-starter.
\end{defn}

\begin{rem}\label{starter structure}
By condition \ref{2-starter}.(\ref{2-starter: item 2}), the translation by $n$ fixing $\infty$, say $\tau$, is clearly an involutory automorphism of $\Sigma$.
The cycle structure of $\Sigma$, induced by the action of $\tau$, is described in \cite{Buratti Rinaldi 08}. 
In particular,  $\Sigma$ contains exactly one cycle on which $\tau$ acts as a reflection, 
the cycle through $\infty$, whose length $\ell_\infty$ is necessarily odd, 
since $\tau$ fixes no vertex other than $\infty$.
Therefore $\Sigma$ is a single-flip $2$-regular graph.
\end{rem}
The following theorem, special
case of a more general result proved in \cite[Theorem 2.3]{Buratti Rinaldi 08}, shows how a $2$-starter can generate a solution to $OP$.

\begin{thm}\label{1rotsols}
If $\Sigma$ is a $2$-starter of $\Z_{2n}$, 
then $\mathcal{S} = \{\Sigma + i\mid 0\leq i\leq n-1\}$ is a $1$-rotational solution to OP$(\Sigma)$.
\end{thm}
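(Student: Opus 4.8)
The plan is to show that the $n$ translates in $\mathcal{S}$ have edge sets partitioning $E(K_{2n+1})$ (so that $\mathcal{S}$ is a $\Sigma$-factorization, i.e.\ a solution to $OP(\Sigma)$, noting that $|V(\Sigma)|=2n+1$ is odd, so $\lambda(|V(\Sigma)|-1)$ is even and we are in the case $OP^0 = OP$), and that the translation group inherited from $\Z_{2n}$ witnesses $1$-rotationality. First note that each $\Sigma+i$ is isomorphic to $\Sigma$ and is a $2$-regular spanning subgraph of $K_{2n+1}$ with exactly $2n+1$ edges, because the map $x\mapsto x+i$ (fixing $\infty$) is an automorphism of $K_{2n+1}$. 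Since $K_{2n+1}$ has $n(2n+1)$ edges and the $n$ factors of $\mathcal{S}$ together contribute $n(2n+1)$ edge-incidences, it suffices to prove that every edge of $K_{2n+1}$ lies in \emph{at least} one factor: the two equal counts then force every edge to lie in exactly one factor, which simultaneously yields the partition and the distinctness of the $n$ translates.

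I would then dispatch the two kinds of edges separately. For the edges through $\infty$: since $\infty$ has degree $2$ in $\Sigma$ and the involution $x\mapsto x+n$ fixes $\infty$ by condition \ref{2-starter}.(\ref{2-starter: item 2}), the neighbourhood $N_\Sigma(\infty)$ is a $2$-subset of $\Z_{2n}$ invariant under this fixed-point-free involution, hence $N_\Sigma(\infty)=\{p,p+n\}$ for some $p$; in $\Sigma+i$ it becomes $\{p+i,p+i+n\}$. Thus $\{\infty,c\}\in\Sigma+i$ exactly when $i\in\{c-p,\,c-p-n\}$, two residues differing by $n$, precisely one of which lies in $\{0,\dots,n-1\}$, so $\{\infty,c\}$ is covered. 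For a finite edge $\{a,b\}$ put $d=a-b\in\Z_{2n}\setminus\{0\}$; by condition \ref{2-starter}.(\ref{2-starter: item 1}) we have $d\in\Delta\Sigma$, so there is an ordered adjacent pair $(u,v)$ in $\Sigma$ with $u-v=d$. Setting $i=a-u$ gives $\{a-i,b-i\}=\{u,v\}\in E(\Sigma)$, i.e.\ $\{a,b\}\in\Sigma+i$; replacing $i$ by the representative of $\{i,i-n\}$ lying in $\{0,\dots,n-1\}$ (legitimate since $\Sigma+n=\Sigma$) exhibits a factor of $\mathcal{S}$ through $\{a,b\}$.

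By the counting reduction, the factors partition $E(K_{2n+1})$, so $\mathcal{S}$ solves $OP(\Sigma)$; and since translation by any $g\in\Z_{2n}$ sends $\Sigma+i$ to $\Sigma+(i+g)$, the group $\Z_{2n}$ permutes $\mathcal{S}$, fixes $\infty$, and acts sharply transitively on $\Z_{2n}$, making the solution $1$-rotational. I expect the only delicate point to be the handling of the edges incident with $\infty$ — in particular, deducing $N_\Sigma(\infty)=\{p,p+n\}$ from condition \ref{2-starter}.(\ref{2-starter: item 2}) and verifying that exactly one of the two candidate indices falls in the range $\{0,\dots,n-1\}$ — since the finite edges are settled at once by condition \ref{2-starter}.(\ref{2-starter: item 1}) as soon as the global edge count is available, and the self-inverse difference $d=n$ requires no special treatment in this ``at least once'' argument.
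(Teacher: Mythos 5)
Your proof is correct. The paper itself gives no argument for this theorem (it is stated as a special case of \cite[Theorem 2.3]{Buratti Rinaldi 08}), but your covering-plus-counting argument is the standard one and all the steps check out: the deduction $N_\Sigma(\infty)=\{p,p+n\}$ from the fixed-point-freeness of $x\mapsto x+n$ on $\Z_{2n}$, the use of $\Delta\Sigma\supset\Z_{2n}\setminus\{0\}$ for the finite edges, the reduction modulo $\Sigma+n=\Sigma$ to an index in $\{0,\dots,n-1\}$, and the count $n(2n+1)=|E(K_{2n+1})|$ that upgrades ``at least once'' to ``exactly once.''
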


We point out that a solution $\mathcal{S}$ to OP constructed as in Theorem \ref{1rotsols}
is \emph{$1$-rotational} since the group of translations of $\Z_{2n}$, fixing $\infty$, 
is an automorphism group of $\mathcal{S}$ which fixes one vertex and acts sharply transitively on the
remaining.

A $2$-starter $\Sigma$ of $\Z_{2n}$ can be easily modified to generate 2-factorizations of $K_{2n+2} - I$ and $K_{2n} + I$. More precisely, we have the following.

\begin{thm}[]\label{2pyrsols}
If $\Sigma$ is an $\left[\underline{\ell_{\infty}}, \ell_1, \ldots, \ell_t\right]$-starter of $\Z_{2n}$, then 
\begin{enumerate}
  \item\label{2pyrsols:1} there is a $2$-pyramidal solution to $OP^-([\ell_{\infty}+1, \ell_1, \ldots, \ell_t])$,
  \item there is a regular solution to $OP^+([\ell_{\infty}-1, \ell_1, \ldots, \ell_t])$.
\end{enumerate}
\end{thm}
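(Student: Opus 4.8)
The plan is to obtain both factorizations by editing the $2$-starter $\Sigma$ only on the cycle through $\infty$ and then developing the modified graph under the translation group $\Z_{2n}$, exactly as in the development behind Theorem \ref{1rotsols}. Throughout, write $\tau$ for the translation by $n$, and recall from Remark \ref{starter structure} that $\tau$ is an involutory automorphism fixing $\infty$ and acting as a reflection on the cycle $C_\infty=(\infty, c_1,\ldots, c_{\ell_{\infty}-1})$ through $\infty$, whose length $\ell_{\infty}=2p+1$ is odd. A reflection of an odd cycle fixes exactly one vertex ($\infty$) and exactly one edge, so the unique $\tau$-fixed edge of $C_\infty$ is its middle edge $e=\{c_p, c_{p+1}\}$, and since $\tau(c_p)=c_{p+1}$ this edge has difference $n$ in $\Z_{2n}$. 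Because $\Sigma$ is a $2$-factor of $K_{2n+1}$ with two edges at $\infty$, the multiset $\Delta\Sigma$ has $2(2n-1)$ entries; as it is $\pm$-symmetric, $\tau$-paired, and (by item \ref{2-starter}.(\ref{2-starter: item 1})) contains every nonzero element, every nonzero difference in fact occurs exactly twice. In particular $n$ occurs with multiplicity $2$, so $e$ is the \emph{only} edge of $\Sigma$ of difference $n$.

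For part \ref{2pyrsols:1}, I would form $\Sigma^*$ on $\Z_{2n}\cup\{\infty_1,\infty_2\}$ (renaming $\infty=\infty_1$) by subdividing the fixed edge $e$: delete $\{c_p,c_{p+1}\}$ and add $\{c_p,\infty_2\}$ and $\{\infty_2,c_{p+1}\}$, leaving the other cycles untouched. This turns $C_\infty$ into an $(\ell_{\infty}+1)$-cycle, so $\Sigma^*\simeq[\ell_{\infty}+1,\ell_1,\ldots,\ell_t]$. The key point is that, extending $\tau$ to fix $\infty_1$ and $\infty_2$, the graph $\Sigma^*$ stays $\tau$-invariant: the reflection of $C_\infty$ about its midpoint now fixes the subdivision vertex $\infty_2$, carrying the enlarged cycle to itself. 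Hence $\{\Sigma^*+i\mid 0\le i\le n-1\}$ is a family of $n$ distinct $2$-factors, invariant under $\Z_{2n}$, which fixes $\infty_1,\infty_2$ and acts with kernel $\{0,n\}$ on the factors: the required $2$-pyramidal structure. Taking $I$ to be the unique $\Z_{2n}$-invariant $1$-factor, namely $\{\infty_1,\infty_2\}$ together with the difference-$n$ matching of $\Z_{2n}$, I would check coverage by differences: subdividing $e$ removed the only difference-$n$ edge, so each within-$\Z_{2n}$ difference-class other than $n$ is covered once; $\infty_2$ is joined in $\Sigma^*+i$ to $c_p+i$ and $c_p+n+i$, hence meets every vertex of $\Z_{2n}$ exactly once across $i=0,\ldots,n-1$, and likewise $\infty_1$ via $c_1+i,\ c_1+n+i$; the edge $\{\infty_1,\infty_2\}$ never appears. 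These are precisely the edges of $K_{2n+2}-I$, each exactly once.

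For part \ref{2pyrsols}.(2), I would instead delete $\infty$ and short-circuit $C_\infty$, removing its two edges at $\infty$ and adding $\{c_1,c_{\ell_{\infty}-1}\}=\{c_1,c_1+n\}$; this yields a cycle of length $\ell_{\infty}-1$ and a graph $\Sigma^+$ on $\Z_{2n}$ with $\Sigma^+\simeq[\ell_{\infty}-1,\ell_1,\ldots,\ell_t]$. The new edge has difference $n$ and is $\tau$-fixed, so $\Sigma^+$ is again $\tau$-invariant, and $\{\Sigma^+ +i\mid 0\le i\le n-1\}$ is a $\Z_{2n}$-invariant family of $n$ distinct $2$-factors on which $\Z_{2n}$ acts sharply transitively on all vertices: a regular solution. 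Here the added edge creates a \emph{second} difference-$n$ edge, so every nonzero difference—including $n$—lies on two edges of $\Sigma^+$; developing over the $n$ translates then covers each non-$n$ class once and each of the $n$ difference-$n$ pairs twice, which is exactly $K_{2n}+I$ with $I$ the difference-$n$ matching.

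The step I expect to be delicate is the bookkeeping at the self-paired difference $n=-n$: one must confirm that $\Sigma$ carries a single difference-$n$ edge, that it is exactly the $\tau$-fixed middle edge of $C_\infty$, and that the two edits interact correctly with the canonical invariant $1$-factor $I$—whose difference-$n$ part is removed in \ref{2pyrsols:1} but doubled in \ref{2pyrsols}.(2). Establishing $\tau$-invariance of the modified graphs, so that one gets $n$ factors rather than $2n$ together with a genuine $f$-pyramidal action, is the conceptual crux; once that is in place the remaining verifications are routine difference counts.
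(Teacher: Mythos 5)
Your proposal is correct and takes essentially the same approach as the paper: both arguments modify the $2$-starter only on the cycle through $\infty$, subdividing the unique $\tau$-fixed (difference-$n$) antipodal edge with a second fixed point for the packing case, and deleting $\infty$ while joining its two neighbours (which differ by $n$) for the covering case, then developing under $\Z_{2n}$. Your extra bookkeeping at the self-paired difference $n$ is sound and matches the paper's identification of the removed and added $1$-factors.
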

\begin{proof}
  Let $\mathcal{S} = \{\Sigma+i\mid 0\leq i\leq n-1\}$ be the $1$-rotational solution to
  $OP(\left[{\ell_{\infty}}, \ell_1, \ldots, \ell_t\right])$ built in Theorem \ref{1rotsols}.
  By remark \ref{starter structure},
  the cycle of $\Sigma$ passing through $\infty$, say $C_\infty$,
  has odd length $\ell_{\infty} =2h+1$ and 
  $C_{\infty} +n = C_{\infty}$, therefore
  $C_{\infty} = (x_1, \ldots, x_h, \infty, x_h+n, \ldots, x_1 + n)$. 
  From $\Sigma$ we construct two $2$-regular graphs $\Sigma^-$ and $\Sigma^+$ 
  by only modifying the cycle $C_\infty$:
  \begin{enumerate}
    \item we obtain $\Sigma^-$ by replacing the edge
    $\{x_1, x_1+n\}$ with the $2$-path $P=x_1, \infty', x_1+n$,  
    \item we obtain $\Sigma^+$  by removing $\infty$ and joining $x_h$ and
    $x_h+n$.
  \end{enumerate}
  We claim that $\mathcal{S}^* = \{\Sigma^*+i\mid 0\leq i\leq n-1\}$ with $*\in\{+, -\}$ are
  the desired solutions to OP.
  
  Note that $\Sigma^-\simeq [\ell_{\infty}+1, \ell_1, \ldots, \ell_t]$ has order $2n+2$, and
  contains $\{\infty', x_1\}$, $\{\infty', x_1+n\}$ and all the edges of 
  $\Sigma$ except for $\{x_1, x_1+n\}$. Therefore $\mathcal{S}^-$ is a 
  $\Sigma^-$-factorization of $K_{2n+2} - I$ where 
  $V(K_{2n+2}) =\Z_{2n}\ \cup\ \{\infty, \infty'\}$
  and $I =\{\{x_1 +i, x_1+n+i\}\mid 0\leq i\leq n-1\}\ \cup \ \{\infty, \infty'\}$, which is clearly
  a $1$-factor.
  
  Similarly $\Sigma^+\simeq [\ell_{\infty}-1, \ell_1, \ldots, \ell_t]$ has order $2n$, and
  contains $\{x_h, x_h+n\}$ and all the edges of 
  $\Sigma$ except for $\{\infty, x_h\}$, $\{\infty', x_h+n\}$. 
  Therefore $\mathcal{S}^+$ is a 
  $\Sigma^+$-factorization of $K_{2n} + J$ where 
  $V(K_{2n}) =\Z_{2n}$
  and $J =\{\{x_h +i, x_h+n+i\}\mid 0\leq i\leq n-1\}$, which is clearly a $1$-factor.  
  
  Considering that $\Sigma^* + n = \Sigma^*$ for $*\in\{+, -\}$, the group of translations of
  $\Z_{2n}$ is 
  a 2-pyramidal automorphism group of $\mathcal{S}^-$ and 
  a regular automorphism group of $\mathcal{S}^+$, and this completes the proof.
\end{proof}
We point out that Theorem \ref{2pyrsols}.(\ref{2pyrsols:1}) was proven in \cite[Lemma 2.4]{HuKoRo79}.

A powerful method to construct $2$-starters, and hence pyramidal solutions to $OP$, 
consists in applying a 
doubling construction, described in \cite[Theorem 3.4]{Buratti Traetta 12}, to a suitable labeling of
$[L \mid m]$. 
The following result, proven in \cite[Proposition 6.4]{Buratti Traetta 12}, 
shows how this method works when applied to classic graceful labelings. We provide a brief proof which illustrates the construction with respect to our notation.

\begin{thm}
\label{doubling1} Let $L$ be a list of integers greater than 2. 
  If there exists a graceful labeling of $[L \mid m]$, then 
  there is a $\left[\underline{2m+3}, \;^2 L\right]$-starter. 
\end{thm}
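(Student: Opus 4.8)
The plan is to realize the desired starter on the vertex set $\Z_{2n}\cup\{\infty\}$, where $2n=2\epsilon+2$ and $\epsilon$ is the size of $[L\mid m]$; a quick count of vertices shows that the order $2n+1$ of a $[\underline{2m+3},\,{}^2L]$-starter equals $(2m+3)+2\sum_{\ell\in L}\ell$, forcing $n=\epsilon+1$. The whole construction hinges on one observation about how a classical graceful labeling behaves modulo $2n$: if $\Gamma$ is a $\GL$ of $[L\mid m]_\epsilon$ on the interval $\II(0,\epsilon)$ (and $[L\mid m]$ has no $2$-cycle, since the entries of $L$ exceed $2$), then by Definition \ref{def:GL} its signed difference multiset is $\Delta\Gamma=\pm\II(1,\epsilon)$, and reducing modulo $2n=2\epsilon+2$ this is exactly $\Z_{2n}\setminus\{0,n\}$. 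In other words, a single copy of $\Gamma$, read inside $\Z_{2n}$, already realizes every nonzero difference except $n=\epsilon+1$, so the only difference I still have to manufacture is $\pm n$, which I will obtain for free from the edge closing the cycle through $\infty$.

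First I would take $\Gamma$ (with vertices in $\II(0,\epsilon)$) together with its translate $\Gamma+n$ (with vertices in $\II(\epsilon+1,2\epsilon+1)$). These two intervals tile $\Z_{2n}$, so $\Gamma$ and $\Gamma+n$ are vertex-disjoint and jointly span $\Z_{2n}$. Each cycle $C$ of $\Gamma$ of length $\ell$ is kept unchanged, and its translate $C+n$ is a second cycle of length $\ell$; performing this for all cycles produces precisely the cycles ${}^2L$ of the target, and the family of finite cycles is manifestly invariant under the translation $\tau\colon x\mapsto x+n$, which swaps $C$ and $C+n$.

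Next I would splice the two path-components into a single cycle through $\infty$. Writing $P=u_0,u_1,\ldots,u_m$ for the $m$-path of $\Gamma$ with end-vertices $x=u_0$ and $y=u_m$, I remove $P$ and $P+n$ as separate components and instead form the cycle $(u_0,u_1,\ldots,u_m,\infty,u_m+n,\ldots,u_0+n)$; that is, I join $y$ to $\infty$, join $\infty$ to $y+n$, and close up with the edge $\{u_0,u_0+n\}$. This cycle has $2(m+1)+1=2m+3$ vertices, passes through $\infty$, and matches the canonical shape $(x_1,\ldots,x_h,\infty,x_h+n,\ldots,x_1+n)$ of Remark \ref{starter structure} with $h=m+1$ and $x_i=u_{i-1}$; in particular it is fixed setwise by $\tau$. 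Crucially, its closing edge $\{u_0,u_0+n\}$ carries difference $n$, the single residue missing above, and it is the only edge bearing that difference.

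Finally I would assemble $\Sigma$ as the union of all these finite cycles and the $\infty$-cycle and check the two defining properties of a $2$-starter. Condition $\Sigma+n=\Sigma$ is immediate from the $\tau$-invariance noted above. For the difference condition, the finite cycles together with the retained path-edges of both $P$ and $P+n$ contribute each element of $\Z_{2n}\setminus\{0,n\}$ exactly twice, while the closing edge supplies $\pm n$; hence $\Delta\Sigma={}^2(\Z_{2n}\setminus\{0\})\supseteq\Z_{2n}\setminus\{0\}$. Since the $\infty$-cycle has length $2m+3$ and the remaining cycles realize ${}^2L$, the graph $\Sigma$ is a $[\underline{2m+3},\,{}^2L]$-starter. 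The only points needing care — and the closest thing to an obstacle — are the bookkeeping that $\II(0,\epsilon)$ and $\II(\epsilon+1,2\epsilon+1)$ genuinely tile $\Z_{2n}$, that $n\notin\II(1,\epsilon)$ so the closing edge is new and $\Sigma$ stays simple, and the check that the degenerate case $m=0$ still yields a valid triangle through $\infty$.
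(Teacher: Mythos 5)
Your construction is the same as the paper's: take $\Gamma\cup(\Gamma+n)$ with $n=\epsilon+1$, splice the two path copies into a $(2m+3)$-cycle through $\infty$ using two edges at $\infty$ and one ``closing'' edge of difference $n$, and observe that $\pm\II(1,\epsilon)$ reduces mod $2n$ to $\Z_{2n}\setminus\{0,n\}$; the only (immaterial) difference is that you attach $\infty$ to the end-vertex $y$ and close at $x$, whereas the paper attaches $\infty$ to $x$ and closes at $y$. The argument is correct, and in fact spells out the difference count that the paper leaves as ``one can easily check.''
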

\begin{proof}
  Let $\Gamma$ be a $\GL(x,y)$ of $[L \mid m]_{n-1}$ and set 
  $\Sigma = \Gamma\ \cup\ (\Gamma+n)\ \cup\ \mathcal{E}$, where 
  $\mathcal{E} =\{\{\infty, x\}, \{\infty, x+n\}, \{y, y+n\}\}$.
  Clearly, $\Gamma$ and $\Gamma+n$ are vertex-disjoint. 
  Also, the edges of $\mathcal{E}$ join the $m$-path in $\Gamma$ 
  (whose end vertices are $x$ and $y$) and its translate by $n$, belonging to $\Gamma+n$, 
  into a $(2m+3)$-cycle.
  Therefore, $\Sigma$ is isomorphic to $[2m+3, \;^2L]$. 
  
  Considering modulo $2n$ the vertices of $\Sigma$ different from $\infty$, 
  one can easily check that  $\Sigma$ is a 
  $\left[\underline{2m+3}, \;^2 L\right]$-starter of $\Z_{2n}$.
\end{proof}

The doubling construction defined in \cite{Buratti Traetta 12} has a wide application.
In the following we illustrate how the application of this construction can be generalized to 
graceful labelings of $[L\mid m]$ containing exactly one $2$-cycle.

\begin{thm}
\label{doubling2} Let $L$ be a list of integers greater than 2.
If there is a graceful labeling of $[L, 2 \mid m]_\epsilon$ whose path contains the difference $\epsilon$,
then there is a $\left[\underline{2m+7}, 4, \;^2 L\right]$-starter. 
\end{thm}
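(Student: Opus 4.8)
The plan is to mimic the doubling construction in the proof of Theorem~\ref{doubling1}, adapting it to account for the single $2$-cycle and the change in the number of vertices. Let $\Gamma$ be the given graceful labeling of $[L, 2\mid m]_\epsilon$, so $V(\Gamma)=\II(0,\epsilon)$, and set $n=\epsilon+3$. This choice is forced by an order count: the target $\left[\underline{2m+7}, 4, {}^2 L\right]$ has order $(2m+7)+4+2\|L\|$ (writing $\|L\|$ for the sum of the entries of $L$), which must equal $2n+1$, and since $\epsilon=\|L\|+m+2$ this gives $n=\epsilon+3$. I would then look for $\Sigma$ of the form $\Gamma\cup(\Gamma+n)\cup\mathcal{E}$ on $\Z_{2n}\cup\{\infty\}$. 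The $L$-cycles of $\Gamma$ and of $\Gamma+n$ immediately provide the ${}^2L$ part, so the work concentrates on the $2$-cycle, the path, and the four vertices $\II(n-2,n-1)\cup\II(2n-2,2n-1)$ that are missed by $\II(0,\epsilon)\cup\II(n,n+\epsilon)$ (these are the ``extra'' vertices, absent in Theorem~\ref{doubling1} because there $V(\Gamma)$ tiles $\Z_{2n}$ exactly).

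Next I would carry out the difference bookkeeping, which drives the whole construction. Since $\Delta\Gamma=\pm\{1,{}^23\}\cup\pm\II(4,\epsilon)$, the differences realized by $\Gamma\cup(\Gamma+n)$ cover, modulo $2n$, every nonzero residue \emph{except} the seven residues
\[ \{\,2,\ n-2,\ n-1,\ n,\ n+1,\ n+2,\ 2n-2\,\}, \]
that is, the classes $\pm 2$, $\pm(n-2)$, $\pm(n-1)$, and $n$. Moreover, by Remark~\ref{rem:GL1} the difference $\pm3$ occurs in $\Gamma$ \emph{only} on the $2$-cycle $C=(a,a+3)$, so any rearrangement that destroys $C$ must also restore $\pm3$. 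Thus $\mathcal{E}$ together with the reshaped $2$-cycle edges must supply precisely the classes $\pm2,\pm(n-2),\pm(n-1),n$ and one further copy of $\pm3$, while leaving a single $(2m+7)$-cycle through $\infty$.

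The construction of $\mathcal{E}$ is the delicate step. The two $2$-cycles $C$ and $C+n$ should be merged into a single $+n$-invariant $4$-cycle on $\{a,a+3,a+n,a+n+3\}$: taken as $(a,a+3,a+n+3,a+n)$ it uses the two retained difference-$3$ edges together with the two new edges $\{a,a+n\}$ and $\{a+3,a+n+3\}$, restoring $\pm3$ and contributing the missing class $n$. The remaining missing classes $\pm2,\pm(n-1),\pm(n-2)$ must then be produced by the edges that splice the two copies of the path and the four extra vertices into the $\infty$-cycle. Here is exactly where the hypothesis that the path carries the difference $\epsilon$ enters: since $\epsilon$ is the maximum difference on $V(\Gamma)=\II(0,\epsilon)$, it is realized uniquely by the edge $\{0,\epsilon\}$, and this edge lying on the path makes the vertices $0$ and $\epsilon=n-3$ available as routing anchors. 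Their values are adjacent to those of the extra vertices $n-2,n-1$ and $2n-2,2n-1$, which is what lets short connecting edges through the extra vertices realize the small and near-$n$ differences $2,\ n-1,\ n-2$ (for instance edges such as $\{\epsilon,n-1\}$, $\{\epsilon,2n-2\}$, $\{\epsilon,2n-1\}$ and their $0$-based and $+n$-translated analogues). By keeping the splicing $+n$-symmetric and anchoring the unconstrained path-endpoints $x,y$ at $\infty$ and at the reflection-fixed edge of the odd $\infty$-cycle, one obtains a single cycle of the correct length $2m+7$.

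Finally I would verify the three defining properties: $\Sigma\simeq\left[2m+7,4,{}^2L\right]$ with $\infty$ on the $(2m+7)$-cycle (by the vertex/length count above), $\Sigma+n=\Sigma$ (every piece was built $+n$-invariantly, with $\infty$ fixed), and $\Delta\Sigma\supseteq\Z_{2n}\setminus\{0\}$ (by the difference audit), so that $\Sigma$ is a $\left[\underline{2m+7}, 4, {}^2 L\right]$-starter by Definition~\ref{2-starter}. I expect the main obstacle to be exactly the realization in the third paragraph: exhibiting an explicit, $+n$-symmetric assignment of the four extra vertices to connecting edges that yields \emph{simultaneously} all three residual classes $\pm2,\pm(n-1),\pm(n-2)$ and still closes up into one cycle. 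The tightness of the residue count (seven residues, few free edges) leaves essentially no slack, and it is precisely the anchoring provided by the difference-$\epsilon$ path edge, together with the rigid shape $(a,a+3)$ of the $2$-cycle, that must be exploited to make the assignment go through.
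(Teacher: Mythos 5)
Your skeleton matches the paper's: $n=\epsilon+3$, the same audit of the seven missing residues, the same idea of merging the two $2$-cycles into a $+n$-invariant $4$-cycle, and the correct observation that the hypothesis $\epsilon\in\Delta P$ is what lets you break the unique difference-$\epsilon$ edge $\{0,n-3\}$ of the path to route through the four extra vertices. However, there is a concrete error that makes your bookkeeping fail to close. You orient the $4$-cycle as $(a,a+3,a+n+3,a+n)$, whose difference list is ${}^2\bigl(\pm\{3,n\}\bigr)$. The paper instead takes $(u,u+3,u+n,u+n+3)$, with difference list ${}^2\bigl(\pm\{3,\,n-3\}\bigr)$. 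This is not a cosmetic choice: your routing destroys the edge $\{0,n-3\}$, which is the \emph{only} edge of $\Gamma$ realizing $\pm(n-3)=\pm\epsilon$, so $\pm(n-3)$ must be restored somewhere. The only vertices whose labels are under your control are $0$, $n-3$, the four extra vertices $n-2,n-1,2n-2,2n-1$, their $+n$-translates, and $\infty$; a short check shows that the only pair among these at difference $\pm(n-3)$ is $\{0,n-3\}$ itself (or its translate), i.e.\ precisely the edge you removed. Hence with your $4$-cycle the class $\pm(n-3)$ is irrecoverable, while the class $n$ it supplies is already obtained for free from the forced fixed edge $\{w_m,w_m+n\}$ (a $+n$-symmetric set of an odd number of new edges must contain a fixed edge, and every fixed edge has difference $n$). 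Switching to the paper's orientation of the $4$-cycle repairs this.

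The second gap is that the splicing you explicitly defer as ``the main obstacle'' is the actual content of the proof, and it is not hard once set up correctly: writing $P=w_0,\ldots,w_i,w_{i+1},\ldots,w_m$ with $w_i=0$ and $w_{i+1}=n-3$, the paper replaces the segment $w_i,w_{i+1}$ by the detour $Q=w_0,\ldots,w_i,\,n-2,\,-1,\,w_{i+1}+n,\ldots,w_m+n$, whose three new edges have differences $\pm(n-2)$, $\pm(n-1)$ and $\pm 2$ and absorb two of the four extra vertices ($n-2$ and $-1\equiv 2n-1$); the translate $Q+n$ absorbs the other two, and $\mathcal{E}=\{\{\infty,w_0\},\{\infty,w_0+n\},\{w_m,w_m+n\}\}$ closes $Q\cup(Q+n)$ into a single $(2m+7)$-cycle while contributing the residue $n$. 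Your candidate edges $\{\epsilon,n-1\}$, $\{\epsilon,2n-2\}$, $\{\epsilon,2n-1\}$ do realize the right classes but cannot all be used (the vertex $\epsilon$ has only one free valency after the break), so an explicit $+n$-symmetric assignment still had to be exhibited; without it, and with the wrong $4$-cycle, the proof is incomplete.
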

\begin{proof} Let $n=\epsilon+3$, and let $\Gamma = \Gamma_0\ \cup\ (u, u+3) \ \cup\ P$ be 
a graceful labeling of $[L, 2 \mid m]_{n-3}$,
where $P$ denotes the $m$-path of $\Gamma$. In the following, we consider the vertices of $\Gamma$ 
modulo $2n$.

We are going to construct a $4$-cycle $C$ and a $(2m+7)$-cycle $C_{\infty}$ so that
the union of $C_{\infty}$, $C$, $\Gamma_0$, and $\Gamma_0+n$ is the desired 
$[2m+7, 4, \;^2 L]$-starter of $\Z_{2n}$.

First, let $C = (u, u + 3, u + n, u + 3 + n)$. Clearly,
\begin{equation}\label{C4}
  C + n = C \;\;\text{and}\;\; \Delta C \supseteq \pm\{3, n-3\}
\end{equation}

We proceed by constructing the $(2m+7)$-cycle $C_{\infty}$.
Since by assumption $\epsilon\in \Delta P$, the edge $\{0, n-3\}$ belongs to $P$, that is,
\[
  \text{$P = w_0, \ldots, w_{i}, w_{i+1}, \ldots, w_m$ with  $w_{i}=0$ and $w_{i+1} = n-3 = \epsilon$},
\]
for some $i\in\{0,\ldots, m-1\}$. Let 
$\mathcal{E} =\{\{\infty, w_0\}, \{\infty, w_0+n\}, \{w_m, w_m+n\}\}$, and let $Q$ denote 
the following $(m+2)$-path ,
\[
  Q = w_0, \ldots, w_{i}, n-2, -1,  w_{i+1}+n, w_{i+2}+n, \ldots, w_m+n.
\]
\begin{center}
\begin{tikzpicture}[x=1cm,y=1cm,scale=0.75]
\draw (2.5,2) circle (3pt)[fill] node[above=2pt] {$w_0$};
\draw (2,2) -- (2.5,2);
\draw (2.5,0) circle (3pt)[fill] node[below=2pt] {$w_m$};
\draw (2,0) -- (2.5,0);
\draw (11.5,2) circle (3pt)[fill] node[above=2pt] {$w_0$};
\draw (11,2) -- (11.5,2);
\draw (11.5,0) [color=black!50!white]  circle (3pt)[fill] node[below=2pt] {$w_m$};
\draw [color=black!50!white] (11,0) -- (11.5,0);
\draw (7,0) [color=black!50!white] circle (3pt)[fill] node[below =2pt] {$\;\epsilon$};
\draw [color=black!50!white] (7,0) -- (9,0);
\draw (9,0) [color=black!50!white] circle (3pt)[fill] node[below =2pt] {$w_{i+2}$};
\draw[color=black!50!white,decorate,decoration={coil,aspect=0}] (11,0) -- (9,0);
\draw (0.5,-1.5) [color=black!50!white] circle (3pt)[fill] node[below =2pt] {$w_{i+2}+n$};
\draw[decorate,decoration={coil,aspect=0}] (2,2) -- (0,2);
\draw[decorate,decoration={coil,aspect=0}] (2,0) -- (0,0);
\draw (-2,2) circle (3pt)[fill] node[above=2pt] {$0$};
\draw (-2,0) circle (3pt)[fill] node[below=2pt] {$\epsilon$};
\draw (-2,0) -- (-2,2);
\draw (0,2) circle (3pt)[fill] node[above=2pt] {$w_{i-1}$};
\draw (0,0) circle (3pt)[fill] node[below=2pt] {$w_{i+2}$};
\draw (-2,0) -- (0,0);
\draw (-2,2) -- (0,2);
\draw[color=black!50!white,decorate,decoration={coil,aspect=0}] (2.5,-1.5) -- (0.5,-1.5);
\draw (-1.5,-1.5) [color=black!50!white] circle (3pt)[fill] node[below=2pt] {$2\epsilon+3$};
\draw (3,-1.5) [color=black!50!white] circle (3pt)[fill] node[below=2pt] {$\;\;w_{m}+n$};
\draw [color=black!50!white]  (2.5,-1.5) -- (3,-1.5);
\draw [color=black!50!white] (-1.5,-1.5) -- (0.5,-1.5);
\node (P) at (-2.5,1) {$P$};
\draw[->] (-1.7, -0.3) -- (-1.2,-1.2) node[midway, right]{\tiny $+n$};
\draw[->] (3,1) -- (4,1);
\draw[decorate,decoration={coil,aspect=0}] (11,2) -- (9,2);
\draw[red] (7.5,-1.5)  -- (5,2) -- (5,0) -- (7,2);
\draw (5,2) circle (3pt)[fill] node[above=2pt] {$-1$};
\draw (5,0) circle (3pt)[fill] node[below=2pt] {$\epsilon+1$};
\draw (7,2) circle (3pt)[fill] node[above=2pt] {$0$};
\draw (9,2) circle (3pt)[fill] node[above=2pt] {$w_{i-1}$};
\draw[decorate,decoration={coil,aspect=0}] (11.5,-1.5) -- (9.5,-1.5);
\draw (7.5,-1.5) circle (3pt)[fill] node[below=2pt] {$2\epsilon+3$};
\draw (9.5,-1.5) circle (3pt)[fill] node[below=2pt] {$w_{i+2}+n$};
\draw (12,-1.5)  circle (3pt)[fill] node[below=2pt] {$\;\;w_{m}+n$};
\draw (11.5,-1.5) -- (12,-1.5);
\draw (9,2) -- (7,2);
\draw (7.5,-1.5) -- (9.5,-1.5);
\node (Q) at (4.5,1) {$Q$};
\end{tikzpicture}
\end{center}
Considering that $V(\Gamma)=\{0, 1, \ldots, n-3\}$, the paths $Q$ and $Q+n$ are clearly vertex disjoint, hence 
$C_{\infty} = \mathcal{E}\ \cup\  Q \ \cup\ (Q+n)$
is a $(2m+7)$-cycle. Also,
\begin{equation}\label{Cinf1}
     C_{\infty} + n = C_{\infty},
\end{equation}
\begin{equation}\label{Cinf2}
  \begin{aligned}
  V(C_{\infty}) &=  V(Q) \ \cup\ V(Q+n) \ \cup\ \{\infty\} \\
                &=  V(P) \ \cup\ V(P+n) \ \cup\ \{-1, n-2, n-1, -2, \infty\}, \\       
  \end{aligned}
\end{equation}
\begin{equation}\label{Cinf3}
  \begin{aligned}
  \Delta(C_{\infty}) &\supseteq \Delta\mathcal{E}\ \cup\   \Delta Q  = \{\pm n\}\ \cup\   \Delta Q\\
                     &= \pm\{n, n-1, n-2, 2\} \ \cup\ (\Delta P\setminus\{\pm(n-3)\}) .
  \end{aligned}                      
\end{equation}
\begin{center}
\begin{tikzpicture}[x=1cm,y=1cm,scale=0.75]
\draw[decorate,decoration={coil,aspect=0}] (0,2) -- (3,2);
\draw[decorate,decoration={coil,aspect=0}] (0,0) -- (3,0);
\draw[decorate,decoration={coil,aspect=0}] (7,2) -- (10,2);
\draw[decorate,decoration={coil,aspect=0}] (7,0) -- (10,0);
\draw (3,2) -- (4,3) -- (5.5,3) -- (5,3);
\draw [dashed]  (5.5,3) -- (7,2);
\draw (0,2) -- (-1,1);
\draw (0,0) -- (-1,1);
\draw (3,0) -- (4,-1) --  (5.5,-1);
\draw [dashed]  (5.5,-1) -- (7,0);
\draw[red] (5.5, 3) -- (7,0);
\draw[red,] (5.5, -1) -- (7,2);
\draw[->] (10.5,1.5) -- (10.5,.5) node[midway, right]{$n = \epsilon+3$};
\draw (10,0) -- (10,2);
\node(C) at (5,-2.5) {$C_\infty$};
\draw (3,2) circle (3pt)[fill] node[above=2pt] {$0$};
\draw (3,0) circle (3pt)[fill] node[left=2ex, below=2pt] {$\epsilon+3$};
\draw (4,3) circle (3pt)[fill] node[above=2pt] {$\epsilon+1$};
\draw (4,-1) circle (3pt)[fill] node[left=1ex, below=4pt] {\small$2\epsilon+4$};
\draw (5.5,3) circle (3pt)[fill] node[above=2pt] {$-1$};
\draw (5.5,-1) circle (3pt)[fill] node[right=1ex, below=2pt] {$\epsilon+2$};
\draw (7,2) circle (3pt)[fill] node[above=2pt] {$\epsilon$};
\draw (7,0) circle (3pt)[fill] node[right=2ex, below=2pt] {$2\epsilon+3$};
\draw (10,2) circle (3pt)[fill] node[above=2pt] {$w_m$};
\draw (10,0) circle (3pt)[fill] node[below=2pt] {$w_m+n$};
\draw (0,2) circle (3pt)[fill] node[above=2pt] {$w_0$};
\draw (0,0) circle (3pt)[fill] node[below=2pt] {$w_0+n$};
\draw (-1,1) circle (3pt)[fill] node[left=2pt] {$\infty$};
\end{tikzpicture}
\end{center}
We are going to show that the union $\Sigma$ of $C_{\infty}$, $C$, $\Gamma_0$, and $\Gamma_0+n$ is the desired 
$\left[\underline{2m+7}, 4, \;^2 L\right]$-starter of $\Z_{2n}$. By taking into account \eqref{Cinf2}, it is not difficult to check
that the vertex sets of $C_{\infty}$, $C$, $\Gamma_0$, and $\Gamma_0+n$ between them partition 
$\Z_{2n} \ \cup\ \{\infty\}$. Hence $\Sigma\simeq [2m+7, 4, \;^2 L]$ and 
$V(\Sigma) = \Z_{2n} \ \cup\ \{\infty\}$. By \eqref{C4} and \eqref{Cinf1}, it follows that $\Sigma + n = \Sigma$.
Finally, by taking into account \eqref{C4} and \eqref{Cinf3}, and considering that
$\Delta \Gamma_0 = (\{\pm1\} \cup \pm\{4,\ldots, n-3\})\setminus\Delta P$ (see Definition \ref{def:GL}),
it follows that $\Delta\Sigma \supseteq\pm\{1,2,\ldots,n\}\supseteq\Z_{2n}\setminus\{0\}$.
Therefore $\Sigma$ is a 
 $\left[\underline{2m+7}, 4, \;^2 L\right]$-starter  of $\Z_{2n}$.
\end{proof}

The following result follows from \cite[Theorem 4.3]{Buratti Traetta 12}.
For clarity and completeness we include the proof.

\begin{thm}\label{doubling3} Let $K=\{k_1,\ldots, k_t\}$ and 
$K'$ be lists of integers greater than 2, 
  let $\Gamma$ be a graceful labeling of $[K, K', \;^a2  \mid m]_{\epsilon}$, with $a\in\{0,1\}$,
  and denote by $P$ the $m$-path of $\Gamma$, and by $C_i$ a $k_i$-cycle of $\Gamma$, 
  for every $i=1,\ldots, t$. 
  
  Letting $n=\epsilon+2a+1$, if the following two conditions are satisfied,
  \begin{enumerate}  
    \item $(\Delta C_{i} + n)\ \cap\ \Delta P\neq \emptyset$ whenever $1\leq i\leq t$, and
    \item $n-3 = \epsilon \in \Delta P$ when $a=1$,
  \end{enumerate}
  then there is a $\left[\underline{2m+ 4a + 3}, \;^a 4,\; 2K, \;^2K'\right]$-starter.
\end{thm}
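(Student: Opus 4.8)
The plan is to generalize the doubling constructions of Theorems~\ref{doubling1} and~\ref{doubling2} by treating the cycles in $K$ and $K'$ differently. Starting from the graceful labeling $\Gamma$ of $[K, K', \;^a2 \mid m]_\epsilon$, I would set $\Sigma = \Sigma_0 \cup (\Sigma_0 + n)$ modified along certain cycles, where $\Sigma_0$ is built from $\Gamma$. The odd-length cycles of $K'$ together with the translate $\Gamma+n$ should simply double: each $k$-cycle $C'$ of $K'$ in $\Gamma$ and its translate $C'+n$ remain as two disjoint $k$-cycles, contributing the list $^2 K'$. The even-length cycles $C_i$ of $K$, however, must be \emph{merged} with their translates $C_i + n$ into single cycles of length $2k_i$, producing the list $2K$; this is exactly where condition~(1) enters, since a nonempty intersection $(\Delta C_i + n) \cap \Delta P$ provides an edge of $C_i$ whose difference, shifted by $n$, also occurs on the path, allowing me to surgically splice $C_i$ and $C_i + n$ together. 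Finally, the $2$-cycle (present iff $a=1$) and the path are handled exactly as in Theorems~\ref{doubling2} and~\ref{doubling1}: the path $P$ and $P+n$ join through $\infty$ into the cycle of length $2m+4a+3$, with the extra $+4$ and the $4$-cycle $^a4$ appearing precisely when $a=1$, which is why condition~(2) requiring $\epsilon \in \Delta P$ is imposed only in that case.

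Concretely, I would first fix $n = \epsilon + 2a + 1$ and consider all vertices modulo $2n$, so that $\Gamma$ and $\Gamma + n$ are vertex-disjoint and their union occupies $V(\Gamma) \cup (V(\Gamma)+n)$. I would then verify the three defining properties of a $2$-starter (Definition~\ref{2-starter}) for the resulting graph $\Sigma$: that $V(\Sigma) = \Z_{2n} \cup \{\infty\}$, that $\Sigma + n = \Sigma$, and that $\Delta\Sigma \supseteq \Z_{2n}\setminus\{0\}$. The translation-invariance $\Sigma + n = \Sigma$ holds because each structural piece---the doubled odd cycles $C' \cup (C'+n)$, the merged even cycles $C_i \cup (C_i+n)$, the $4$-cycle, and the $(2m+4a+3)$-cycle through $\infty$---is built symmetrically under the shift by $n$, as in~\eqref{C4} and~\eqref{Cinf1}. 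The difference condition is the crux: I would track which differences are consumed by each surgery and confirm that, combined with $\Delta\Gamma = (\{\pm1\}\cup\pm\{4,\ldots,\epsilon\})\setminus\Delta P$ (adjusting for the $2$-cycle via Remark~\ref{rem:GL1} when $a=1$), the total list $\Delta\Sigma$ covers all of $\pm\{1,\ldots,n\}$, hence all of $\Z_{2n}\setminus\{0\}$ modulo $2n$.

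The main obstacle I anticipate is the merging surgery for the even cycles and its bookkeeping of differences. Unlike the path-splicing in Theorem~\ref{doubling2}, where the path has two free ends to attach to $\infty$, a cycle $C_i$ is closed, so to fuse $C_i$ with $C_i + n$ into a $2k_i$-cycle I must delete an edge $e$ of $C_i$ and the corresponding edge $e+n$ of $C_i+n$, then reconnect using two edges whose differences are congruent to $\pm n$ modulo $2n$. Condition~(1) guarantees such an edge $e$ exists with $\Delta e + n$ matching a path difference, but I must check carefully that this operation neither destroys a needed difference nor duplicates one, and that performing it simultaneously for all $i=1,\ldots,t$ is consistent---the reconnecting $\pm n$ edges from distinct cycles must not collide, and the differences removed from the $C_i$ must be accounted for against the differences that $\Gamma_0$ contributes. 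Managing the interaction between these multiple simultaneous surgeries, the $4$-cycle extraction, and the path-to-$\infty$ construction, while keeping the difference ledger exactly balanced, is the delicate part; the remaining verifications are routine adaptations of the arguments already given in Theorems~\ref{doubling1} and~\ref{doubling2}.
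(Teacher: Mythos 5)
Your overall plan---first form the doubled starter as in Theorems~\ref{doubling1} and~\ref{doubling2}, then merge each $C_i$ with $C_i+n$ into a $2k_i$-cycle---is the same strategy the paper uses, but the merging surgery you describe does not work, and the idea that makes it work is missing. You propose to delete an edge $e=\{a_i,b_i\}$ of $C_i$ and $e+n$ of $C_i+n$ and ``reconnect using two edges whose differences are congruent to $\pm n$ modulo $2n$'', i.e.\ $\{a_i,a_i+n\}$ and $\{b_i,b_i+n\}$. This does produce a $2k_i$-cycle, but it destroys the differences $\pm d_i$ (where $d_i=b_i-a_i$): since $\Gamma$ is graceful, each difference occurs exactly once in $\Gamma$, so modulo $2n$ the elements $d_i$ and $2n-d_i$ occur in $\Delta\Sigma$ only on the two edges you just deleted. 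The resulting graph is therefore not a $2$-starter, and no amount of bookkeeping repairs this, because the lost differences are simply gone. You flag the danger (``neither destroys a needed difference nor duplicates one'') but never supply the mechanism that resolves it.

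The resolution---and the actual role of hypothesis~(1)---is a \emph{four-edge swap that simultaneously modifies the cycle $C_\infty$ through $\infty$}. Condition~(1) gives $d_i\in\Delta C_i$ with $d_i+n\in\Delta P\subseteq\Delta C_\infty$ (the caveat for $a=1$ being handled by Remark~\ref{rem:GL1}), hence an edge $\{u_i,w_i\}$ of $C_\infty$ with $w_i-u_i=d_i+n$, and its mate $\{u_i+n,w_i+n\}$ since $C_\infty+n=C_\infty$. One then replaces the four edges $\{a_i,b_i\}$, $\{a_i+n,b_i+n\}$, $\{u_i,w_i\}$, $\{u_i+n,w_i+n\}$ by the crossed edges $\{a_i,b_i+n\}$, $\{a_i+n,b_i\}$, $\{u_i,w_i+n\}$, $\{u_i+n,w_i\}$. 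Because $w_i-u_i=d_i+n$, the old and new four-edge sets have \emph{identical} difference lists ($\pm\,^2\{d_i,\,d_i+n\}$ modulo $2n$), so $\Delta\Sigma$ is untouched; the swap fuses $C_i$ and $C_i+n$ into one $2k_i$-cycle while turning $C_\infty$ into another cycle of the same length, and the swaps for distinct $i$ are disjoint since the $d_i$ are pairwise distinct by gracefulness. Without this compensating exchange on $C_\infty$, condition~(1) plays no role in your argument and the difference ledger cannot balance.
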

\begin{proof} 
By Theorem \ref{doubling1} when $a=0$ and by Theorem \ref{doubling2} when $a=1$, 
there exists a $\left[\underline{2m+ 4a + 3}, \;^a 4,\; \;^2K, \;^2K'\right]$-starter $\Sigma$. 
  Recalling the relative proofs which illustrate how the doubling construction works in each case, and letting
  $C_{\infty}$ be the cycle of $\Sigma$ passing through $\infty$, we have that
  \begin{enumerate}
  \renewcommand{\theenumi}{\emph{\roman{enumi}}}
    \item\label{doubling3:i} 
    $\Delta C_{\infty}\supseteq
           \begin{cases}
             \Delta P & \text{if $a=0$},\\
             \Delta P \setminus\{\pm (n-3)\} & \text{if $a=1$},\\             
           \end{cases}
          $   
    \item\label{doubling3:2} both $C_i$ and $C_{i} + n$ are cycles of $\Sigma$.
  \end{enumerate}
  By assumption $(\Delta C_{i} + n) \ \cap\ \Delta P \neq \emptyset$ whenever $1\leq i\leq t$.
  Note that $\pm (n-3) = (n\pm 3) \not\in (\Delta C_{i} + n)$ when $a=1$, otherwise  
  $\pm 3\in \Delta C_{i}$ contradicting Remark \ref{rem:GL1}. 
  Therefore, by condition (\ref{doubling3:i}) we have that
  \[
    (\Delta C_{i} + n) \ \cap\ \Delta C_{\infty} \neq \emptyset
  \]
  whenever $1\leq i\leq t$. This means that
  there exist a difference $d_i\in \Delta C_i$, 
  an edge $\{a_i, b_i\}$ of $C_i$, 
  and an edge $\{u_i, w_i\}$ of $C_{\infty}$ such that 
  \begin{equation}\label{doubling2:cond1}
    \text{$d_i = b_i-a_i$ and $d_i + n = w_i - u_i$,}
  \end{equation}  
  for every $i=1,2,\ldots, t$. Considering that $C_{\infty} + n = C_{\infty}$ (Remark \ref{starter structure}),
  we have that the edge $\{u_i + n, w_i+n\}$ belongs to $C_{\infty}$, as well.  
  We will use condition \eqref{doubling2:cond1} to turn $\Sigma$ into the desired
  $2$-starter of $\Z_{2n}$. 
  
  Let $\Sigma'$ be the graph obtained from $\Sigma$ by replacing the set of edges
  $\mathcal{E}\subset E(\Sigma)$ with $\mathcal{E}'$, where 
  \begin{align*}  
    \mathcal{E} & = 
    \{\{a_i, b_i\}, \{a_i + n, b_i+n\}, \{u_i, w_i\}, \{u_i + n, w_i+n\}\mid  
    1\leq i\leq t\}, \\
   \mathcal{E}' & = 
    \{\{a_i, b_i+n\}, \{a_i + n, b_i\}, \{u_i, w_i+n\}, \{u_i + n, w_i\}\mid  
    1\leq i\leq t\}    
  \end{align*}
\begin{center}
\begin{tikzpicture}[x=1.3cm,y=1cm,scale=.65]
\draw[->] (-.5,1.5) -- (-.5,.5) node[midway, left]{\small$+n$};
\draw [dashed](0,2) .. controls (1,3.5) and (3,3.5) .. (4,2) node [midway,above] {\tiny$d_i$};
\draw[dashed](0,0) .. controls (1,-1.5) and (3,-1.5) .. (4,0) node [midway,below] {\tiny$d_i$};
\draw [red] (0,2) -- (4,0) node[black, near start,left] {\tiny$d_i+n\hspace{1.5mm}$};
\draw [red] (0,0) -- (4,2) node[black, near start,left=2pt] {\tiny$d_i+n\hspace{1mm}$};
\draw [dashed] (8,2) -- (9.5,2) node [midway, above] {\tiny$d_i+n$};
\draw [dashed] (8,0) -- (9.5,0) node [midway, below] {\tiny$d_i+n$};
\draw (6,0) -- (6,2);
\draw [red] (8,2) -- (9.5,0) node [black,near start, left] {\tiny$d_i$} ;
\draw [red] (8,0) -- (9.5,2) node [black,near start, left] {\tiny$d_i$};

\foreach \y in {2} {
\draw[decorate,decoration={coil,aspect=0}] (4,\y) -- (0,\y);
\draw (0,\y) circle (3pt)[fill] node[above left=2pt] {$a_i$};
\draw (4,\y) circle (3pt)[fill] node[above right=2pt] {$b_i$};
\draw[decorate,decoration={coil,aspect=0}] (6,\y) -- (8,\y);
\draw[decorate,decoration={coil,aspect=0}] (9.5,\y) -- (11.5,\y);
\draw (6,\y) circle (3pt)[fill] node[above=2pt] {};
\draw (8,\y) circle (3pt)[fill] node[above=4pt] {$u_i$};
\draw (9.5,\y) circle (3pt)[fill] node[above=4pt] {$w_i$};
\draw (11.5,\y) circle (3pt)[fill] node[above=2pt] {};
}
\foreach \y in {0} {
\draw[decorate,decoration={coil,aspect=0}] (4,\y) -- (0,\y);
\draw (0,\y) circle (3pt)[fill] node[below=6pt] {$a_i + n \hspace{.5cm}$};
\draw (4,\y) circle (3pt)[fill] node[below=6pt] {$\hspace{.5cm} b_i + n$};
\draw[decorate,decoration={coil,aspect=0}] (6,\y) -- (8,\y);
\draw[decorate,decoration={coil,aspect=0}] (9.5,\y) -- (11.5,\y);
\draw (6,\y) circle (3pt)[fill] node[above=2pt] {};
\draw (8,\y) circle (3pt)[fill] node[below =6pt] {$u_i + n\hspace{.5cm}$};
\draw (9.5,\y) circle (3pt)[fill] node[below =6pt] {$\hspace{.5cm} w_i + n$};
\draw (11.5,\y) circle (3pt)[fill] node[above=2pt] {};
}
\draw (11.5,2) -- (12,1) -- (11.5,0);
\draw (12,1) circle (3pt)[fill] node[right=2pt] {$\infty$};
\node (Ci) at (2,2.5) {$C_i$};
\node (Ci+n) at (2,-.5) {$C_i+n$};
\node (Cinf) at (7,1) {$C_{\infty}$};
\draw [dashed] (4.5, 1.2) -- (5.5,1.2) node [black, midway, above=2pt] {$\mathcal{E}$} ;
\draw [red] (4.5, .8) -- (5.5, .8) node [red, midway, below] {$\mathcal{E}'$} ;
\end{tikzpicture}
\end{center}
  We are going to show that $\Sigma'$ is a 
  $\left[\underline{2m+ 4a + 3}, \;^a 4,\; 2K, \;^2K'\right]$-starter.
  
  Clearly, $V(\Sigma') = V(\Sigma) = \Z_{2n} \cup \{\infty\}$. 
  Considering \eqref{doubling2:cond1}, we have that
  \[
   \Delta \mathcal{E}' = \pm\{n+d_i, n-d_i, 2n+d_i, d_i\} 
   = \pm^2\{n+d_i, d_i\} = \Delta \mathcal{E}.
  \]
  Therefore, recalling that $\Sigma$ is a $2$-starter, it follows that 
  $\Delta \Sigma' = \Delta \Sigma \supset \Z_{2n}\setminus\{0\}$. 

  Further, since $\Sigma + n = \Sigma$ (Definition \ref{2-starter}) 
  and $\mathcal{E} + n = \mathcal{E}$, 
  it follows that $(\Sigma\setminus\mathcal{E}) + n = \Sigma\setminus\mathcal{E}$.
  Since $\Sigma' = (\Sigma\setminus\mathcal{E})\ \cup\ \mathcal{E}'$ and
  $\mathcal{E}' + n = \mathcal{E}'$, we have that $\Sigma' + n = \Sigma'$.
  We have therefore proven that $\Sigma'$ is a $2$-starter of $\Z_{2n}$. 
  
  It is left
  to show that $\Sigma'$ is isomorphic to $[2m+ 4a + 3, \;^a 4,\; 2K, \;^2K']$.  
  Replacing $\mathcal{E}$ with $\mathcal{E}'$ only affects the cycles 
  $C_i$ and $C_i+n$ for $1\leq i\leq t$, 
  and the cycle $C_{\infty}$ of length $2m+ 4a + 3$.
  It is not difficult to see that this substitution turns $C_{\infty}$ into a different cycle
  of the same length, while turns each pair $\{C_i, C_i + n\}$ of $k_i$-cycles into
  a single $(2k_i)$-cycles. Therefore, we can conclude that 
  $\Sigma' \simeq [2m+ 4a + 3, \;^a 4,\; 2K, \;^2K']$, 
  and this completes the proof.
\end{proof}

%
%
%

\section{Labeling extension lemmas}
In this section we show how to extend a labeling of $[L \mid m]$ by either extending the length of the path, or increasing the number of cycles.

The following results show, in two slightly different ways, that if the graph $[L\mid m]$ has a graceful labeling for a particular value of $m$, then the same holds for every sufficiently large $m$. 

\newcommand{\DeltaP}{\Delta_p}

From now on, given a labeling $\Gamma$ of $[L\mid m]$, 
we will denote by $\DeltaP\Gamma$ the list of differences produced by the path in $\Gamma$.

\begin{lem}\label{lem:extension1} 
Let $\Gamma$ be 
a $\GL(x,y)$ $(\text{resp. $\AL(x,y)$})$ of $[L\mid m]_\epsilon$, with $x < y $. Then there exists a 
$\GL(x_{\mu}, y_{\mu})$ $(\text{resp. $\AL(x_{\mu}, y_{\mu})$})$ of $[L\mid m + \mu]$, say $\Gamma_{\mu}$, with
\[  x_{\mu}=
  \begin{cases}
    x& \text{if $\mu$ is even},\\
    \epsilon +\mu -x & \text{if $\mu$ is odd},
  \end{cases}   
  \;\; \text{and}\;\;
  y_{\mu} = y +
  \left\lfloor\frac{\mu}{2}\right\rfloor,
\]
for every $\mu \geq 2x+1$, except possibly when $\mu = 4x+1$. 

Furthermore, $\DeltaP\Gamma_\mu \supseteq \II(\epsilon+1, \epsilon + \mu)$.
\end{lem}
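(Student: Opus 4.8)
The plan is to build $\Gamma_\mu$ from $\Gamma$ in two moves: first \emph{translate} so that $V(\Gamma)=\II(0,\epsilon)$ (legitimate by Remark \ref{rem:GL2}, and by Remark \ref{rem: AL} in the $\AL$ case), and then \emph{lift} the whole labeling up by $t:=\lfloor \mu/2\rfloor$, replacing $\Gamma$ by $\Gamma+t$. Since a translation changes neither the multiset of differences nor the cycle components, $\Gamma+t$ is still a valid labeling of $[L\mid m]$ on the vertex set $\II(t,\,t+\epsilon)$, with path-endpoints $x+t$ and $y+t$; in particular the endpoint $y+t=y_\mu$ already has the required form. The lift frees exactly $t$ \emph{small} labels $\II(0,t-1)$ and $\mu-t$ \emph{large} labels $\II(\epsilon+t+1,\epsilon+\mu)$, i.e. $\mu$ new labels in total, which is precisely what is needed to turn the $m$-path into an $(m+\mu)$-path on the vertex set $\II(0,\epsilon+\mu)$.

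Next I would attach to the endpoint $p:=x+t$ a \emph{zigzag path} $Z$ on these $\mu$ new labels, whose far end is the new endpoint $x_\mu$ and whose $\mu$ edges carry exactly the differences $\II(\epsilon+1,\epsilon+\mu)$. The key device is the single \emph{junction edge} $\{p,\,p+\epsilon+1\}=\{x+t,\;x+t+\epsilon+1\}$: its difference is $\epsilon+1$, and one checks that $x+t+\epsilon+1$ is a genuine large free label precisely when $\mu\ge 2x+1$ (this is where the hypothesis enters, and why one extends from the \emph{smaller} endpoint $x$ rather than $y$). Removing this edge, what remains of $Z$ is an ordinary path on the split label set $\II(0,t-1)\cup\II(\epsilon+t+1,\epsilon+\mu)$ with prescribed end-labels, and I would produce it by invoking Lemma \ref{paths} with $J_1=\II(0,t-1)$ and $J_2=\II(\epsilon+t+1,\epsilon+\mu)$. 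A direct reading of that lemma shows this sub-path realizes exactly $\pm\II(\epsilon+2,\epsilon+\mu)$, so together with the junction edge the extension contributes precisely $\pm\II(\epsilon+1,\epsilon+\mu)$, and $\DeltaP\Gamma_\mu\supseteq\II(\epsilon+1,\epsilon+\mu)$ is immediate since all new edges lie on the path.

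The bookkeeping then closes the argument. The old differences $\pm\II(1,\epsilon)$ (or $\pm\{1,{}^23\}\cup\pm\II(4,\epsilon)$ in the one-$2$-cycle case) are untouched by the lift, and disjointly adjoining $\pm\II(\epsilon+1,\epsilon+\mu)$ yields exactly the difference set required by Definition \ref{def:GL} for size $\epsilon+\mu$. The parity of $\mu$ dictates which block contains the free endpoint $x_\mu$: since $Z$ alternates between the small and large blocks, for even $\mu$ the far end lands in the small block at label $x$, whereas for odd $\mu$ it lands in the large block at $\epsilon+\mu-x$, matching the stated formula; the balance $\bigl||J_1|-|J_2|\bigr|=|t-(\mu-t)|\le 1$ is what upgrades the construction to an $\AL(x_\mu,y_\mu)$ in the bipartite case, the new small and large labels extending the two partite intervals downward and upward respectively.

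The main obstacle is the faithful realization of $Z$ as a path with the two \emph{exactly prescribed} end-labels and the exact set of leftover differences (and, in the $\AL$ case, with the endpoint colours compatible with the bipartition). This is exactly the content of Lemma \ref{paths}, but that lemma forbids the symmetric middle position $i=\gamma/2$ in its unequal-block cases. Tracing this through — for odd $\mu$ one applies case (3) with $\gamma_2=t$ and endpoint index $i=x$ — the forbidden value $i=\gamma_2/2$ becomes $x=t/2$, i.e. $t=2x$, i.e. $\mu=4x+1$; for even $\mu$ one is in the equal-block case (2), which carries no exclusion, explaining why the sole exceptional value is the odd $\mu=4x+1$. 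The remaining verifications (that the lifted graph and the zigzag are vertex-disjoint, that $V(\Gamma_\mu)=\II(0,\epsilon+\mu)$, and that no difference is repeated outside a possible $2$-cycle) are routine interval arithmetic.
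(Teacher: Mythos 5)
Your proposal is correct and is essentially the paper's own proof: both attach at the smaller endpoint $x$ a junction edge of difference $\epsilon+1$ followed by an Abrham-type path from Lemma \ref{paths} occupying $\lfloor\mu/2\rfloor$ new labels below and $\lceil\mu/2\rceil$ new labels above the original vertex interval, with the exceptional value $\mu=4x+1$ arising exactly as you trace it, from the excluded middle index $i=\gamma_2/2$ in that lemma. The only difference is cosmetic: you translate $\Gamma$ up by $\lfloor\mu/2\rfloor$ before attaching, whereas the paper builds the extension on $\II\left(-\lfloor\mu/2\rfloor,\ \epsilon+\lceil\mu/2\rceil\right)$ and translates at the end.
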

\begin{proof} 
Let $\Gamma$ be a $\GL(x,y)$ of $[L\mid m]_\epsilon$ with $x<y$.
Also,
let $\mu$ be an integer such that
\begin{equation}\label{mu}
  \mu \geq 2x+1\;\;\text{and}\;\; \mu\neq 4x+1. 
\end{equation}  
We start by applying Lemma \ref{paths} with 
$(w_1, \gamma_1) = \left(-\left\lfloor \frac{\mu}{2} \right\rfloor, 
                   \left\lfloor \frac{\mu}{2} \right\rfloor-1\right)$,
$(w_2, \gamma_2) = \left(\epsilon +1, \left\lceil\frac{\mu}{2}\right\rceil-1  \right)$, and 
$i=x$.
Note that
$\gamma_1 < \gamma_2$ if and only if $\mu$ is odd, in which case 
$i\neq \frac{\gamma_2}{2}$ since $\mu\neq 4x+1$ by \eqref{mu}.

Therefore, by Lemma \ref{paths}, the graph $[- \mid \mu-1]$
has an $\AL(J_1, J_2, x', y')$, say $P$, where $J_i = \II(w_i, w_i+\gamma_i)$, for $i=1,2$,
\[ 
  x' =
  \begin{cases}
    w_1 + i = -\frac{\mu}{2} + x & \text{if $\mu$ is even,}\\
    w_2 + \gamma_2 - i = \epsilon +\lceil\frac{\mu}{2}\rceil - x  & \text{if $\mu$ is odd,}
  \end{cases}\;\;\text{and}\;\;
  y' = w_2+i = \epsilon +1 + x.
\]
By joining the $m$-path in $\Gamma$ and the $(\mu-1)$-path $P$ with the edge $\{x, y'\}$, we obtain 
the graph $\Gamma'_{\mu}$ which is clearly isomorphic to $[L \mid m+\mu]$; also,
\begin{align*}
  V(\Gamma'_\mu)    &= V(\Gamma)\ \cup\ V(P) = \II(0, \epsilon) \ \cup \ (J_1\ \cup \ J_2) = 
  \II\left(-\left\lfloor \frac{\mu}{2}\right\rfloor, 
           \epsilon + \left\lceil \frac{\mu}{2}\right\rceil
     \right), \\
 \Delta \Gamma'_\mu &= \Delta \Gamma\ \cup\ \Delta\{x,y'\}\ \cup\ \Delta P = 
                       \Delta \Gamma \ \cup\ \{\pm(\epsilon +1)\} \ \cup\ \pm\II(\epsilon + 2, \epsilon + \mu),
\end{align*}
and the ends of the path in $\Gamma'_\mu$ are $x'$ and $y$.
In other words, 
$\Gamma'_\mu$ is a graceful labeling of $[L\mid m+\mu]$ with parameter set 
$\left(
\II\left(-\left\lfloor \frac{\mu}{2}\right\rfloor, 
\epsilon +\left\lceil \frac{\mu}{2}\right\rceil\right), x', y\right)$. 
Therefore, $\Gamma_\mu = \Gamma'_\mu + \lfloor\frac{\mu}{2}\rfloor$ is the desired graceful labeling of 
$[L \mid m+\mu]$; indeed, the path in $\Gamma_\mu$ has the same list of differences as the one in 
$\Gamma'_\mu$ which contains $\II(\epsilon+1, \epsilon + \mu)$, 
and its ends $x'+\lfloor\frac{\mu}{2}\rfloor$ and $y+\lfloor\frac{\mu}{2}\rfloor$ satisfy the assertion.

Finally, one can check that $\Gamma_\mu$ is an $\alpha$-labeling,
when $\Gamma$ is. Indeed,
if $\Gamma$ is an $\AL\big(\II(0, \delta_1), \II(\delta_1 + 1, \epsilon\big), x, y)$ of $[L\mid m]_\epsilon$,
then $\Gamma'_\mu$ is an 
$\AL\left(J_1, J_2, x, y' \right)$ of $[L\mid m]$, with 
$J_1=\II\left(-\lfloor\frac{\mu}{2}\rfloor, \delta_1\right)$
and 
$J_2 = \II\left(\delta_1 + 1, \epsilon + \left\lceil \frac{\mu}{2}\right\rceil \right)$, hence
$\Gamma_\mu = \Gamma'_\mu + \lfloor\frac{\mu}{2}\rfloor$ is an $\alpha$-labeling.
\end{proof}
\begin{ex} As in the proof of Lemma \ref{lem:extension1}, 
we consider the graphs $\Gamma$ and $P$ pictured below, which respectively represent
a $\GL(x,y)$ of $[4,5\mid 1]_{10}$ with $(x,y)=(5,6)$, and an $\AL\big(\II(-5,-1), \II(11,16), 11, 16\big)$ 
of $[\;- \mid \mu-1]$, with $\mu=11$. By joining the vertices $5$ and $16$ we obtain the graceful labeling $\Gamma_{11}$
of $[4,5\mid 1 + 11]$. The desired labeling is then $\Gamma_{11} + 5$.
\begin{center}
\begin{tikzpicture}[x=1cm,y=1cm,scale=0.75]
\draw[fill=orange!10!white] (-.5,-1) -- (5.5,-1) -- (5.5,3) -- (-.5,3) -- cycle;
\foreach \x in {0, ..., 5} 
{
\coordinate (\x) at (\x,2);
}
\foreach[evaluate={10-\val} as \x] \val in {10, ..., 6} {
\coordinate (\val) at (\x+0.5,0);
}
\foreach \x in {0, ..., 4} {
\draw (\x) circle (3pt)[fill] node[above=2pt] {$\small\x$};
}
\foreach[evaluate={10-\val} as \x] \val in {10,..., 7} {
\draw (\val) circle (3pt)[fill] node[below=2pt] {\small\pgfmathprintnumber{\val}};
}
%
\foreach[evaluate={10-2*\x} as \val, evaluate={10-\x} as \y] \x in {0, 1} {
\draw (\x) -- (\y) node [midway, left] {\tiny\pgfmathprintnumber{\val}};
} 
\foreach[evaluate={9-2*\x} as \val, evaluate={9-\x} as \y] \x in {0, 2} {
\draw (\x) -- (\y) node [midway, left] {\tiny\pgfmathprintnumber{\val}};
}
\foreach[evaluate={11-2*\x} as \val, evaluate={11-\x} as \y] \x in {4, 5} {
\draw (\x) -- (\y) node [near end, right] {\tiny\pgfmathprintnumber{\val}};
}
\draw (2) -- (8) node [near end, left] {\tiny 6};
\draw (10) -- (3) node [midway, above] {\tiny 7};
\draw (4) -- (8) node [near start, left] {\tiny 4};
\draw (1) .. controls (2,3) .. (3) node [near start, above] {\tiny 2};
\draw (2,3.5) [right] node {$\Gamma$};
\draw[fill=orange!10!white] 
(-7,-1) -- (-1,-1) -- (-1,3) -- (-7,3) -- cycle;
\foreach \x in {-7, ..., -1} {
\coordinate (\x) at (\x-1,2);
}
\foreach[evaluate={9.5-\val} as \x] \val in {18, ..., 11} {
\coordinate (\val) at (\x,0);
}
\foreach \x in {-5, ..., -1} {
\draw (\x) circle (3pt)[fill] node[above=2pt] {$\small \x$};
}
\foreach[evaluate={\val-10} as \x] \val in {16,..., 11} {
\draw (\val) circle (3pt)[fill] node[below=2pt] {\small\pgfmathprintnumber{\val}};
}
\foreach[evaluate={11-\x} as \y, evaluate={2*\x-11} as \val] \x in {12,...,16} {
\draw (\x) -- (\y) node[near start, right=1pt, inner sep=0.5pt] {\tiny\pgfmathprintnumber{\val}};
}
\foreach[evaluate={10-\x} as \y, evaluate={2*\x-10} as \val] \x in {11,...,15}{
\draw (\x) -- (\y) node[near end, right=1pt, inner sep=0.5pt] {\tiny\pgfmathprintnumber{\val}};
}
\draw (16) .. controls (-4, -3) and (7.2,-3) .. (5) node[midway, above]{\tiny{11}};
\draw (5) circle (3pt)[fill=white] node[above=2pt] {\small 5};
\draw (6) circle (3pt)[fill=white] node[below=2pt] {\small 6};
\draw (16) circle (3pt)[fill=white];
\draw (11) circle (3pt)[fill=white];
\draw (-4,3.5) [right] node {$P$};
\end{tikzpicture}
\end{center}
\end{ex}

\begin{lem}\label{lem:extension2} Let $\Gamma$ be 
a $\GL(x,y)$ $(\text{resp. $\AL(x,y)$})$ of $[L\mid m]_\epsilon$, with $x < y $. Then there exists a 
$\GL(x_{\mu}, y_{\mu})$ $(\text{resp. $\AL(x_{\mu}, y_{\mu})$})$ of $[L\mid m + \mu]$, say $\Gamma_{\mu}$, with
\[x_{\mu} = x +
  \left\lceil\frac{\mu}{2}\right\rceil, 
  \;\; \text{and}\;\;
  y_{\mu}=
  \begin{cases}
    y + \mu & \text{if $\mu$ is even},\\
    \epsilon -y & \text{if $\mu$ is odd},
  \end{cases}   
\]
for every $\mu \geq 2(\epsilon - y)+1$, except possibly when $\mu = 4(\epsilon - y)+1$. 

Furthermore, $\DeltaP\Gamma_\mu \supseteq \II(\epsilon+1, \epsilon + \mu)$.
\end{lem}
\begin{proof} 
Let $\Gamma$ be a $\GL(x,y)$ (resp. $\AL(x,y)$) of $[L\mid m]_\epsilon$ with $x<y$.
It is not difficult to check that the graph $\Gamma' = -\Gamma + \epsilon$ is 
a $\GL(x',y')$ (resp. $\AL(x',y')$) of~$[L\mid m]$, with
\[
  x' = \epsilon -y<  \epsilon -x = y'.
\]
By applying  Lemma \ref{lem:extension1} to $\Gamma'$, we obtain the existence of  a 
$\GL(x'_{\mu}, y'_{\mu})$ (resp. $\AL(x'_{\mu}, y'_{\mu})$ of $[L\mid m + \mu]$, 
say $\Gamma'_{\mu}$, such that
\[  x'_{\mu}=
  \begin{cases}
    x'& \text{if $\mu$ is even},\\
    \epsilon +\mu -x' & \text{if $\mu$ is odd},
  \end{cases}   
  \;\;
  y'_{\mu} = y' +
  \left\lfloor\frac{\mu}{2}\right\rfloor,\;\;\text{and}
\]
\[
  \DeltaP \Gamma'_\mu \supseteq \II(\epsilon+1, \epsilon + \mu),
\]
whenever $\mu \geq 2x'+1 = 2(\epsilon -y) + 1$, 
except possibly when $\mu = 4x'+1 = 4(\epsilon -y)+1$.
As before, we have that $\Gamma_\mu = -\Gamma'_\mu + (\epsilon + \mu)$ is a 
$\GL(x_{\mu}, y_{\mu})$ (resp. $\AL(x_{\mu}, y_{\mu})$) of $[L\mid m + \mu]$,
with $x_{\mu} = (\epsilon + \mu) - y'_{\mu}$ and 
$y_{\mu} = (\epsilon + \mu) - x'_{\mu}$, which both satisfy the assertion.
Since $\DeltaP \Gamma_\mu = \DeltaP \Gamma'_\mu$, it follows that 
$\Gamma_\mu$ is the desired labeling of $[L\mid m + \mu]$.
\end{proof}

\begin{lem}\label{lem:extension2.5} 
Let $\Gamma$ be a $\AL(x,y)$ of $[L\mid m]_\epsilon$, with $x < y $ and $\epsilon$ odd. 
Then there exists an 
$\AL(x_{\mu}, y_{\mu})$ of $[L\mid m + \mu]$, say $\Gamma_{\mu}$, with
\[x_{\mu} = x +
    \frac{\mu}{2}, 
    \;\; \text{and}\;\;
  y_{\mu}= y + \mu 
\]
for every even $\mu > \epsilon - 2x$, 
\end{lem}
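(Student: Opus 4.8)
The plan is to follow the template of Lemmas \ref{lem:extension1} and \ref{lem:extension2}: attach to $\Gamma$ a path furnished by Lemma \ref{paths} and verify that the result has the required parts, end-vertices and difference set. First I would make a structural observation that pins down the configuration. Since $\Gamma$ is an $\alpha$-labeling, $[L\mid m]$ is bipartite, so all cycles are even, the sum of the cycle lengths is even, and therefore (as $\epsilon$ is odd) the path length $m$ is \emph{odd}. Consequently the two path-ends lie in opposite parts, and since $x<y$ while the default parts $J_1=\II(0,\tfrac{\epsilon-1}{2})$ and $J_2=\II(\tfrac{\epsilon+1}{2},\epsilon)$ are separated by value, we must have $x\in J_1$ and $y\in J_2$. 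This is the regime in which the bound $\mu>\epsilon-2x$ is meaningful, and it removes any need for a case analysis on where $x$ and $y$ sit.

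Next I would lift the upper part of $\Gamma$. Using the shift of Definition \ref{def:AL}, put $\Gamma^{*}=\Gamma+(0,\mu)$; this is admissible and yields an $\AL(J_1,\,J_2+\mu,\,x,\,y+\mu)$ of $[L\mid m]$ with $\Delta\Gamma^{*}=\pm\II(\mu+1,\epsilon+\mu)$. The integers of $\II(0,\epsilon+\mu)$ left free by $\Gamma^{*}$ form a single block $H=\II(\tfrac{\epsilon+1}{2},\tfrac{\epsilon+2\mu-1}{2})$ of $\mu$ vertices, which the midpoint splits into $H_1=\II(\tfrac{\epsilon+1}{2},\tfrac{\epsilon+\mu-1}{2})$ and $H_2=\II(\tfrac{\epsilon+\mu+1}{2},\tfrac{\epsilon+2\mu-1}{2})$, each of size $\mu/2$. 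Exactly as in Lemma \ref{lem:extension1}, I would feed $H_1,H_2$ into the case $\gamma_1=\gamma_2$ of Lemma \ref{paths} with index $i=x+\tfrac{\mu}{2}-\tfrac{\epsilon+1}{2}$, obtaining an $\alpha$-labeling $P$ of the $(\mu-1)$-path with parts $H_1,H_2$, end-vertices $x+\tfrac{\mu}{2}$ and $x+\mu$, and $\Delta P=\pm\II(1,\mu-1)$.

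The one genuinely load-bearing step is the admissibility of this index, and it is precisely here that the hypothesis is used: $i\ge 0$ is equivalent to $x+\tfrac{\mu}{2}\ge\tfrac{\epsilon+1}{2}$, i.e.\ to $\mu>\epsilon-2x$ (both $\mu$ and $\epsilon+1-2x$ being even), while $i\le\tfrac{\mu}{2}-1$ reduces to $x\le\tfrac{\epsilon-1}{2}$, which holds because $x\in J_1$. Since the two parts $H_1,H_2$ have equal size, I am in the case of Lemma \ref{paths} that carries no forbidden value of $i$, which is why the statement has no exceptional $\mu$.

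Finally I would glue the pieces by setting $\Gamma_\mu=\Gamma^{*}\cup P\cup\{\{x,\,x+\mu\}\}$, joining the end $x$ of $\Gamma^{*}$ to the end $x+\mu$ of $P$ with an edge of difference $\mu$. This fuses the two paths into a single $(m+\mu)$-path with ends $y+\mu$ and $x+\tfrac{\mu}{2}$. Three verifications then finish the proof. The vertex set is $J_1\cup H_1=\II(0,\tfrac{\epsilon+\mu-1}{2})$ together with $H_2\cup(J_2+\mu)=\II(\tfrac{\epsilon+\mu+1}{2},\epsilon+\mu)$, the default parts for size $\epsilon+\mu$, and every edge of $\Gamma^{*}$, of $P$, and the joining edge runs between them. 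The differences telescope,
\[
  \Delta\Gamma_\mu=\pm\II(\mu+1,\epsilon+\mu)\ \cup\ \pm\II(1,\mu-1)\ \cup\ \pm\{\mu\}=\pm\II(1,\epsilon+\mu),
\]
as demanded by Definition \ref{def:AL}.(3). And the ends $x+\tfrac{\mu}{2}$ and $y+\mu$ land in the lower and upper part respectively. Hence $\Gamma_\mu$ is the sought $\AL(x+\tfrac{\mu}{2},\,y+\mu)$. The main obstacle is thus not the gluing but the up-front choice of the lift $\Gamma+(0,\mu)$, which is what turns the free vertices into a single contiguous block and makes both the index condition and the difference telescoping come out cleanly.
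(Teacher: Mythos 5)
Your proof is correct, but it takes a genuinely different route from the paper's. The paper's proof is a reduction: for odd $n$ it introduces the permutation $f_n$ of $\II(0,n)$ that reverses each of the two half-intervals separately (an operation sending $\alpha$-labelings to $\alpha$-labelings), conjugates $\Gamma$ by $f_\epsilon$ to get an $\AL(\bar x,\bar y)$ with $\bar x=\frac{\epsilon-1}{2}-x$, applies the already-proved Lemma \ref{lem:extension1} (noting that $\mu>\epsilon-2x=2\bar x+1$ and that the exceptional value $4\bar x+1$ is odd, hence avoided), and conjugates back by $f_{\epsilon+\mu}$; the stated formulas for $x_\mu$ and $y_\mu$ then drop out of the two reflections. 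You instead build the labeling directly: translate the upper part by $(0,\mu)$, observe that the vacated vertices form a single central block of size $\mu$, fill that block with an $\alpha$-labeled $(\mu-1)$-path from Lemma \ref{paths} in the equal-parts case, and glue with an edge of difference $\mu$. Both arguments ultimately rest on Lemma \ref{paths} and both correctly locate the role of the hypothesis $\mu>\epsilon-2x$ (for you, admissibility of the index $i=x+\frac{\mu}{2}-\frac{\epsilon+1}{2}\geq 0$; for the paper, the threshold $\mu\geq 2\bar x+2$ in Lemma \ref{lem:extension1}). The paper's version is shorter given Lemma \ref{lem:extension1} and exposes a reusable symmetry of odd-size $\alpha$-labelings; yours is self-contained, makes the absence of an exceptional value of $\mu$ structurally transparent (equal part sizes put you in the unexcluded case of Lemma \ref{paths}), and your preliminary observation that $\epsilon$ odd forces $m$ odd and hence $x\in J_1$, $y\in J_2$ is exactly the same opening step the paper makes. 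All of your verifications (vertex set, bipartition, end-vertices, and the telescoping $\pm\II(\mu+1,\epsilon+\mu)\cup\pm\II(1,\mu-1)\cup\pm\{\mu\}=\pm\II(1,\epsilon+\mu)$) check out.
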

\begin{proof}
Let $\Gamma$ be an $\AL(x, y)$ of $[L\mid m]_{\epsilon}$, where 
$\epsilon$ is an odd integer and $x<y$. Since by assumption $\epsilon$ 
-- which represents the size of $\Gamma$ --
is odd, the two parts of $\Gamma$ as a bipartite graph are the intervals $\II(0, (\epsilon-1)/2)$
and $\II((\epsilon+1)/2, \epsilon)$. Considering also that the cycles of $\Gamma$ have all
even size, then the path in $\Gamma$ has odd length, therefore its end vertices lie in different parts, namely, $x\leq (\epsilon-1)/2 < y$.

For any odd integer $n$, we define the permutation $f_n$ of the interval 
$\II(0, n)$ of even size as follows:
\[
  f_n(i) =
  \begin{cases}
    \frac{n-1}{2} - i  & \text{if $0\leq i \leq \frac{n-1}{2}$},\\
    \frac{3n+1}{2} - i & \text{if $\frac{n+1}{2} \leq i \leq n$}.    
  \end{cases}
\]
One can check that 
$\overline{\Gamma} = f_{\epsilon}(\Gamma)$ 
is an 
$\AL(\ol{x}, \ol{y})$ of $[L \mid m]_{\epsilon}$ where 
\[
\ol{x} = \frac{\epsilon-1}{2} - x,\;\; 
\text{and}\;\; \ol{y} = \frac{3\epsilon+1}{2} - y
\] 
Now let $\mu$ be an even integer with $\mu> \epsilon - 2x$.
This is equivalent to saying that
$\mu > 2\ol{x}+1$. Note also that $\mu\neq 4\ol{x}+1$, since $\mu$ is even. Therefore, 
Lemma \ref{lem:extension1} guarantees that there is an
$\AL(x', y')$ of $[L \mid m + \mu]_{\epsilon + \mu}$, 
say $\Gamma'$, such that
\[
x' =
%
%
\ol{x},
\;\;\;\;
y' = \ol{y} + \frac{\mu}{2}.
\]
It follows that
$\Gamma'' = f_{\epsilon + \mu}(\Gamma')$ 
is the the desired 
$\AL(x'', y'')$ of $[L \mid m + \mu]$, where 
\[
x'' = \frac{\epsilon+\mu-1}{2} - x' = x + \frac{\mu}{2},\;\; \text{and}\;\; 
y'' = \frac{3(\epsilon+\mu)+1}{2} - y' = y + \mu.
\]  
\end{proof}

The following result allows us to construct graceful labelings of
$[L_0,\; L_1\mid m]$ for sufficiently large $m$, 
whenever there exists an $\alpha$-labeling of $[L_0 \mid m_0]$ and a graceful labeling 
of $[L_1 \mid m_1]$.

\begin{lem} \label{lem:extension3} 
Assume there are an $\AL(x_0, y_0)$ of $[L_0 \mid m_0]_{\epsilon_0}$
and a $\GL(x_1, y_1)$ of $[L_1 \mid m_1]_{\epsilon_1}$, where $\epsilon_0$ is odd, and $x_i < y_i$, for $i=0,1$.
Then there exists
a $\GL$ of $[L_0, L_1 \mid m_0 + m_1 + \mu]$, say $\Gamma_\mu$, for every 
$\mu\geq B$ where
\[
  B= 2\epsilon_0 +6\epsilon_1 + 4x_1 - 6y_1 +2x_0 +2y_0 + 16 
     \leq  5\epsilon_0 +6\epsilon_1 +9.
\]
In particular, the result holds for every 
$\mu\geq  5\epsilon_0 +6\epsilon_1 +9$.

Furthermore, 
\begin{align*}
     \DeltaP\Gamma_\mu  
      &\supseteq
       \II(2\epsilon_0 +5\epsilon_1 + 2y_0 + 2x_1 - 4y_1 + 11, \epsilon_0 + \epsilon_1 + \mu)\\
      &\supseteq 
       \II(4\epsilon_0 +5\epsilon_1 + 7, \epsilon_0 + \epsilon_1 + \mu).      
\end{align*}             
\end{lem}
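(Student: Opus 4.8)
The plan is to treat the two inputs asymmetrically, exploiting the extra rigidity of an $\alpha$-labeling, and to build the target graceful labeling in three stages: first prepare the $\alpha$-labeling $\Gamma_0$ and record its partite structure; then extend the graceful labeling $\Gamma_1$, stack the two difference sets into a single consecutive interval, and merge the two paths into one by a single bridging edge; and finally stretch the resulting path to arbitrary length $\mu$ via the extension lemmas, reading off $\DeltaP\Gamma_\mu$ and the threshold $B$ along the way.

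First I would analyze $\Gamma_0$. Since $\epsilon_0$ is odd, exactly as observed in the proof of Lemma \ref{lem:extension2.5}, the two parts of $\Gamma_0$ are the equal-sized intervals $\II\big(0,(\epsilon_0-1)/2\big)$ and $\II\big((\epsilon_0+1)/2,\epsilon_0\big)$, the $m_0$-path has odd length, and its end-vertices lie in different parts, so $x_0\le(\epsilon_0-1)/2<y_0\le\epsilon_0$. The key feature I will use is the operation $\Gamma_0+(a_1,a_2)$ from after Definition \ref{def:AL}: because every edge of an $\alpha$-labeling joins the two parts, translating one part by a constant shifts every difference by that same constant. This lets me relocate the entire difference multiset of $\Gamma_0$ into any prescribed consecutive range, in particular into the range lying immediately on top of the differences of $\Gamma_1$, while keeping its vertices in a controlled interval (Remark \ref{rem: AL}).

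Next I would extend $\Gamma_1$ by Lemma \ref{lem:extension1} or \ref{lem:extension2} so that its path end-vertices land at convenient extreme positions and its size $\epsilon_1'$ reaches the value for which the two vertex sets tile a single gap-free interval $\II(0,N')$ without collision; simultaneously I shift $\Gamma_0$ by the bipartite operation above so that its differences become $\II(\epsilon_1'+1,\,\epsilon_0+\epsilon_1')$, sitting directly above the differences $\II(1,\epsilon_1')$ of the extended $\Gamma_1$. Their vertex-disjoint union then realizes every difference except the single top value $\epsilon_0+\epsilon_1'+1$, so I join an end of the $\Gamma_1$-path to an end of the $\Gamma_0$-path by one edge chosen to realize exactly that missing difference; this produces a graceful labeling of $[L_0,L_1\mid m_0+m_1+c]$ of size $\epsilon_0+\epsilon_1+c$ with explicitly known path end-vertices $(x,y)$. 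A final application of Lemma \ref{lem:extension1} (resp.\ \ref{lem:extension2}) lengthens this path to $\mu$, giving a $\GL$ of $[L_0,L_1\mid m_0+m_1+\mu]$ for all $\mu\ge B$; the extension automatically places the differences $\II(\epsilon_0+\epsilon_1+c+1,\,\epsilon_0+\epsilon_1+\mu)$ on the path, which contains the asserted interval, and the weaker bound $\II(4\epsilon_0+5\epsilon_1+7,\cdot)$ then follows from $x_1<y_1$ and $y_0\le\epsilon_0$. The value $B$ is precisely the extension threshold of Lemma \ref{lem:extension1}/\ref{lem:extension2} written through $(x,y)$, and the estimate $B\le 5\epsilon_0+6\epsilon_1+9$ drops out by inserting $x_0\le(\epsilon_0-1)/2$, $y_0\le\epsilon_0$, $x_1\ge 0$ and $y_1>x_1$.

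The hard part, I expect, will be the middle stage: choosing the shift constants and the extension amount for $\Gamma_1$ so that three conditions hold at once — the union of all labels is a single gap-free interval, the difference multiset is exactly $\pm\II(1,\cdot)$ with no repetition and the unique bridge edge supplies precisely the one missing difference, and the merged path end-vertices land in positions for which the parity-dependent threshold of the final extension lemma evaluates to the stated $B$. This is pure bookkeeping, but it is delicate because Lemmas \ref{lem:extension1} and \ref{lem:extension2} each split into even/odd cases and each carry an exceptional forbidden value of $\mu$, so the end-vertices must be steered away from those exceptional cases at every step.
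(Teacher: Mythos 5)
Your overall architecture is the same as the paper's: use the two\mbox{-}part translation $\Gamma_0+(a_1,a_2)$ to slide the difference set of the $\alpha$-labeling so that it sits as a consecutive block on top of the differences of the (extended) graceful labeling, tile the vertex sets into one gap-free interval with $V(\Gamma_1')$ filling the middle, supply the single missing difference with one bridging edge between the two paths, and finish with Lemma \ref{lem:extension1}. The threshold $B$ and the interval of path differences are read off exactly as you describe.

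The gap is in the middle stage, and it is not mere bookkeeping: extending only $\Gamma_1$ leaves you one free integer parameter too few. Once you insist that the labels tile $\II(0,N)$, the shift of the upper part of $\Gamma_0$ is forced to equal the size of the middle block, so the only remaining freedom is the extension amount of $\Gamma_1$. The bridging edge must realize precisely the unique missing difference $\epsilon_1'+1$, and a direct computation shows that the discrepancy between the bridge difference $y_0''-x_1''$ and $\epsilon_1'+1$ is governed by the fixed constant $c=y_0-\frac{\epsilon_0+1}{2}-x_1$, which can be negative or smaller than the admissibility threshold $\epsilon_1-y_1+1$ of Lemma \ref{lem:extension2}; in those cases no choice of a single extension amount for $\Gamma_1$ makes the bridge edge work. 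The paper resolves this by \emph{also} extending the path of $\Gamma_0$, via Lemma \ref{lem:extension2.5} (which exists precisely for this purpose and is why $\epsilon_0$ is assumed odd), with linked amounts $\lambda_1=\lambda_0+c$ and a four-way case analysis on the position of $c$ relative to $a=\frac{\epsilon_0+1}{2}-x_0$ and $b=\epsilon_1-y_1+1$; an extra shift $\delta\in\{0,1\}$ is then built into $\lambda_0$ so that the final application of Lemma \ref{lem:extension1} can always dodge its forbidden value $\mu''=4x_0''+1$. You correctly flag the exceptional values of $\mu$ as a danger, but the need for a second, coupled extension of $\Gamma_0$'s path is the missing idea without which the construction fails for a positive proportion of inputs.
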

\begin{proof}
Let $\Gamma_0$ be an $\AL(x_0, y_0)$ of $[L_0\mid m_0]_{\epsilon_0}$, where 
$\epsilon_0$ is an odd integer, and
let $\Gamma_1$ be a $\GL(x_1, y_1)$ of $[L_1\mid m_1]_{\epsilon_1}$, 
with $x_i < y_i$ for $i=0,1$.

Set $a=\frac{\epsilon_0+1}{2}-x_0>0$, $b = \epsilon_1 - y_1 + 1>0$, 
$c = y_0 - \frac{\epsilon_0+1}{2} - x_1$.
Also set $\lambda_1(\delta) = \lambda_0(\delta) + c$ where
\begin{equation}\label{lambda0} 
\lambda_0(\delta) = \delta +
\begin{cases}
   -c & \text{if $c\leq -a$},\\
   b-c & \text{if $-a< c \leq b-a$},   \\
   a& \text{if $b-a\leq c  < b$},   \\
   0 & \text{if $c\geq b$}.   
\end{cases}
\;\;\;\text{and}\;\;\;
\delta\in\{0,1\}.
\end{equation}
The labelings we are going to build and
the related parameters will be depending on
$\lambda_0(\delta)$ and $\lambda_1(\delta)$.
However, for clarity reasons, in the following we will write 
$\lambda_0$ and $\lambda_1$ in place of 
$\lambda_0(\delta)$ and $\lambda_1(\delta)$, respectively. 
The dependence on $\delta$ will be understood.

By applying Lemma \ref{lem:extension2.5} to $\Gamma_0$ (with $\mu = 2\lambda_0$), 
and Lemma \ref{lem:extension2} to $\Gamma_1$ (with $\mu = 2\lambda_1$), 
we have that there exist an
$\AL(x'_0, y'_0)$ of $[L_0 \mid m_0 + 2\lambda_0]$
say $\Gamma'_0$, and a
$\GL(x'_1, y'_1)$ of $[L_1 \mid m_1 + 2\lambda_1]$
say $\Gamma'_1$, such that
\begin{equation}\label{endvertices}
x'_i = x_i +  \lambda_i,\;\; \text{and}\;\; 
y'_i = y_i + 2\lambda_i,\;\; \text{for $i=0,1$}.
\end{equation}
We now construct a graceful labeling of 
$[L_0, L_1 \mid m_0+m_1 + 2(\lambda_0+\lambda_1) + 1]_\epsilon$, a zillion graph of size 
$\epsilon = \epsilon_0+\epsilon_1 + 2(\lambda_0+\lambda_1)+1$, 
by modifying and then joining 
the labelings $\Gamma'_0$ and $\Gamma'_1$. More precisely,
let $\Gamma''_0   = \Gamma_0' + (0, \epsilon_1 + 2\lambda_1 +1)$ and 
$\Gamma''_{1} = \Gamma_1' + \left(\frac{\epsilon_0  +1}{2} +\lambda_0\right)$. Note that
\begin{align*}
  V(\Gamma''_0)     &= 
  \II\left(0, \frac{\epsilon_0 -1}{2}+ \lambda_0\right) \ \bigcup\  \\
  &\;\;\;\;\;
  \II\left(\frac{\epsilon_0 + 1}{2} + \lambda_0 + (\epsilon_1 + 2\lambda_1 +1), 
              \epsilon_0 +  2\lambda_0 + (\epsilon_1 +2\lambda_1 +1)\right),\\
  V(\Gamma''_1)     &= 
  \II
  \left(\frac{\epsilon_0 +1}{2} + \lambda_0, 
        \frac{\epsilon_0  + 1}{2} + \lambda_0 + \epsilon_1 + 2\lambda_1
  \right),\\               
  \Delta \Gamma''_0 &= \pm \II\big(\epsilon_1 + 2\lambda_1 +2, \epsilon_0 + \epsilon_1 + 2\lambda_0 +2\lambda_1 +1\big),\\
  \Delta \Gamma''_1 &=
  \begin{cases} 
    \pm \II(1, \epsilon_1 + 2\lambda_1) & \text{if $[L]$ has no $2$-cycle},\\
    \pm \{1, \;^23\}\ \cup\ \pm\II(4, \epsilon_1 + 2\lambda_1) & \text{if $[L]$ has exactly one $2$-cycle}.  
  \end{cases}
\end{align*}
Also, by \eqref{endvertices}, 
the ends $x''_i$ and $y''_i$ of the path in $\Gamma''_i$, for $i=0,1$, are:
\begin{align*}
    x''_0 &= x'_0 = x_0+ \lambda_0,\;\;\;\\
    y''_0 &= y'_0 + (\epsilon_1 +2\lambda_1 +1) = y_0 +\epsilon_1 + 2\lambda_0  +2\lambda_1 +1,\\
    x''_1 &= x'_1 +  \frac{\epsilon_0  +1}{2} +\lambda_0   = 
             x_1+ \frac{\epsilon_0  +1}{2} +\lambda_0 + \lambda_1, \\
    y''_1 &= y'_1 + \frac{\epsilon_0  +1}{2} +\lambda_0   =
             y_1 + \frac{\epsilon_0 +1}{2} + \lambda_0 + 2\lambda_1.
\end{align*}
Furthermore,
\[ y''_0 - x''_1 = \epsilon_1 + \mu_1 +1 + (c + \lambda_0 - \lambda_1) = 
\epsilon_1 + \mu_1 +1,
\]
since $\lambda_1 = \lambda_0 + c$.
Letting $\Gamma = 
\Gamma''_0\ \cup\ \Gamma''_1\ \cup\ \{x''_1,y''_0\}$
be the graph obtained by joining the paths $\Gamma''_0$ and $\Gamma''_1$ through
the edge $\{x''_1,y''_0\}$, we have that 
$\Gamma$ is a labeling of the graph 
$[L_0, L_1 \mid m_0+m_1 + 2(\lambda_0+\lambda_1) + 1]$ of size
$\epsilon = \epsilon_0 +\epsilon_1 + 2\lambda_0  +2\lambda_1 +1$.
Also, 
\[
\begin{aligned}
  & V(\Gamma) = \II(0, \epsilon),\\ 
  & \Delta \Gamma =   \Delta \Gamma''_1
  \ \cup \ \pm 
  \II(\epsilon_1 + 2\lambda_1+1, \epsilon).
\end{aligned}
\]
Therefore, $\Gamma$ is
a $\GL(x''_0, y''_1)$  of  $[L_0, L_1\mid m_0 + m_1 + 2\lambda_0  +2\lambda_1 +1]$.

By  \eqref{lambda0}, it is not difficult to check that 
\[
\begin{aligned}
  0 \leq \lambda_0 = \lambda_0(\delta) &\leq \delta + \frac{\epsilon_0+1}{2} + \epsilon_1 + x_1 - y_1 + 1\\
  &\leq \frac{\epsilon_0+1}{2} + \epsilon_1 + x_1 - y_1 + 2
\end{aligned}  
\]
Therefore, letting $B$ be the parameter defined in the statement, we have that
\begin{equation}\label{B}
\begin{aligned}
  B &= 2\epsilon_0 +6\epsilon_1 + 4x_1 - 6y_1 +2x_0 +2y_0 + 16 \\
  &\geq 6\lambda_0 +  2x_0 +  2y_0 -2x_1 -\epsilon_0 + 1 \\
  & = 6\lambda_0 +  2x_0 +  2c + 2 \\
  & = 4\lambda_0 + 2c + 2(x_0 + \lambda_0) +2 \\
  & =  2(\lambda_0 + \lambda_1) + 2x''_0 + 2.
\end{aligned}  
\end{equation}
Also, 
\[
\begin{aligned}
  \epsilon &=  \epsilon_0 +\epsilon_1 + 2\lambda_0(\delta)  +2\lambda_1(\delta) +1\\
        &=  \epsilon_0 +\epsilon_1 + 4\lambda_0(\delta)  +2c +1\\
        & \leq 2\epsilon_0 +5\epsilon_1 + 2y_0 + 2x_1 - 4y_1 + 10
\end{aligned}  
\]        
Now let $\mu'\geq B$ and set $\mu'' = \mu' - (2\lambda_0 + 2\lambda_1 + 1)$.
By \eqref{B}, we have that $\mu'' \geq 2x_0'' + 1$.
Note also that $4x_0'' + 1 = 4(x_0 + \lambda_0(\delta)) + 1$. Therefore, 
$\mu''$ is distinct from $4x_0'' + 1$ for at least one of the two values of $\delta$.
Recalling also that $x''_0\leq y''_1$, we can apply Lemma \ref{lem:extension1} 
(with $\mu = \mu''$) to $\Gamma$ and obtain
a graceful labeling 
$\Gamma_{\mu}$ of $[L_0, L_1\mid m_0 + m_1  +\mu]$
such that
\begin{align*}
\DeltaP \Gamma_{\mu} &\supseteq \II(\epsilon +1, \epsilon_0 + \epsilon_1 + \mu) \\
&\supseteq
\II(2\epsilon_0 +5\epsilon_1 + 2y_0 + 2x_1 - 4y_1 + 11, \epsilon_0 + \epsilon_1 + \mu).
\end{align*}
Considering that $x_1 - y_1 \leq -1$, $x_0\leq \frac{\epsilon_0-1}{2}$, $y_0\leq \epsilon_0$,
it follows that $B\leq 5\epsilon_0 +6\epsilon_1 +9$, and
this completes the proof.
\end{proof}

\section{Constructing $\alpha$-labelings of $[L\mid m]$}
\label{sec:alpha}
The trivial necessary condition for an $\alpha$-labeling of $[L\mid m]$ to exist is that $[L\mid m]$ is bipartite, namely, all its cycles have even length.
In this section we show that this condition is also sufficient when $m$ is large enough. 

We start by recalling a result proven in \cite{Traetta 13}.

\begin{thm}\label{evencycle+edge}
  For every $\lambda\geq 1$ and $i\in\{0,1\}$, there exists an $\alpha$-labeling of
  $[4\lambda - 2i\mid 2i+1]$ whose $(2i+1)$-path $P$ is as follows:
  \[
    P=\begin{cases}
        \lambda, 3\lambda+1 & \text{if $i=0$},\\
        \lambda-1, 3\lambda+1, \lambda, 3\lambda & \text{if $i=1$},        
      \end{cases}
  \]
  except when $(\lambda,i)\in\{(1,0), (1, 1), (2,1)\}$.
\end{thm}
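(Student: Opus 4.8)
The plan is to reduce the statement to an explicit labeling of the cycle alone, and then to build that labeling by a zigzag pattern with a controlled modification near its centre. First I would record the shape forced by the convention for an unspecified $\alpha$-labeling: here the size is $\epsilon = 4\lambda+1$ in both cases, which is odd, so the two parts are $J_1 = \II(0, 2\lambda)$ and $J_2 = \II(2\lambda+1, 4\lambda+1)$, each of cardinality $2\lambda+1$, and together they exhaust $\II(0, 4\lambda+1)$; thus the sought $\alpha$-labeling is in fact a graceful labeling with every label used exactly once. A direct check shows that the prescribed path lies correctly across the two parts and realizes exactly the central differences: the single edge for $i=0$ contributes $\pm(2\lambda+1)$, while the $3$-path for $i=1$ contributes $\pm\{2\lambda, 2\lambda+1, 2\lambda+2\}$, on the central vertices $\{\lambda, 3\lambda+1\}$ (resp.\ $\{\lambda-1, \lambda, 3\lambda, 3\lambda+1\}$). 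Hence it remains to gracefully label the cycle $[4\lambda-2i]$ on the complementary vertex set, realizing the complementary difference set $\pm\bigl(\II(1, 4\lambda+1)\setminus\{2\lambda+1\}\bigr)$ for $i=0$, and $\pm\bigl(\II(1,4\lambda+1)\setminus\II(2\lambda, 2\lambda+2)\bigr)$ for $i=1$.

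For the cycle I would start from the canonical alternating (zigzag) sequence $0, 4\lambda+1, 1, 4\lambda, 2, 4\lambda-1, \ldots, 2\lambda, 2\lambda+1$ on $\II(0, 4\lambda+1)$, whose consecutive differences run through $4\lambda+1, 4\lambda, \ldots, 1$ and which alternates between $J_1$ and $J_2$ by construction. Its centre is occupied precisely by the prescribed central vertices, so deleting them cuts the sequence into two arcs that already realize the required \emph{outer} runs of differences. The naive reconnection of these arcs into a single cycle, however, reuses exactly the forbidden central differences; the essential step is therefore to re-route the few innermost vertices as a short flexible path so that the two closing edges realize precisely the missing outer differences (namely $2\lambda$ and $2\lambda+2$ when $i=0$), while preserving the $J_1/J_2$ alternation. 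I would give this central block by an explicit formula, treating $i=0$ and $i=1$ separately, since for $i=1$ the prescribed path $\lambda-1, 3\lambda+1, \lambda, 3\lambda$ is not a subpath of the zigzag and forces a genuinely different central arrangement.

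It then remains to verify, for the resulting labeling $\Gamma$, the three defining conditions: that the vertex classes are $J_1$ and $J_2$ (immediate from the alternation), that the two prescribed vertices are the ends of the path $P$ (built in by the choice of central block), and that $\Delta\Gamma = \pm\II(1, 4\lambda+1)$ with no repetition. The last point is the main obstacle: one must check that the two outer difference-runs coming from the arcs, together with the closing differences and the differences carried by $P$, partition $\II(1, 4\lambda+1)$ exactly. I expect this to reduce to a short telescoping computation once the central block is pinned down. Finally I would dispose of the three exceptional pairs by direct non-existence arguments on the reduced cycle: for $(\lambda,i)=(2,1)$ the required difference $7$ cannot be produced on the remaining vertices $\{0,3,4,5,8,9\}$; for $(1,0)$ the four-cycle on $\{0,2,3,5\}$ is forced to repeat a difference while omitting $2$ and $4$; and $(1,1)$ is degenerate, since $[2\mid 3]$ contains a $2$-cycle and hence admits no $\alpha$-labeling.
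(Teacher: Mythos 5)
First, a point of comparison: the paper does not prove this statement at all --- Theorem~\ref{evencycle+edge} is imported verbatim from \cite{Traetta 13} --- so there is no in-paper argument to measure yours against; you are attempting to reprove the cited result. Your reduction is correct: with $\epsilon=4\lambda+1$ the default parts are $\II(0,2\lambda)$ and $\II(2\lambda+1,4\lambda+1)$, the prescribed path does lie across them and carries exactly the central differences ($\pm(2\lambda+1)$, resp.\ $\pm\{2\lambda,2\lambda+1,2\lambda+2\}$), so the task is indeed a bipartite labeling of the cycle on the complementary vertices realizing the complementary differences. Your analysis of the three exceptional pairs is also sound: for $(2,1)$ the difference $7$ is unrealizable on $\{0,3,4,5,8,9\}$, for $(1,0)$ the unique bipartite $4$-cycle on $\{0,2,3,5\}$ has differences $3,1,3,5$, and $[2\mid 3]$ has a repeated edge, hence no $\alpha$-labeling.

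The genuine gap is that the one step carrying all of the combinatorial content --- the ``central block given by an explicit formula'' --- is never produced. You correctly observe that the naive reconnection of the two zigzag arcs fails (for $i=0$ the two closing chords are forced to both have length $\lambda+1$ or both $2\lambda+1$, never the missing lengths $2\lambda$ and $2\lambda+2$), but the fix is not merely a choice of two closing edges: one must rearrange several innermost vertices, which changes the arcs' endpoints and hence which chord lengths are available, and one must then re-verify that the resulting multiset of differences is exactly $\pm\II(1,4\lambda+1)$ with the central values removed and that the result is a single cycle rather than a union of shorter ones. For instance, for $(\lambda,i)=(2,0)$ a working cycle is $(0,9,1,8,4,5,3,6)$: the inner zigzag arc $3,6,4,5$ must be re-ordered to $4,5,3,6$ so that its endpoints become $\{4,6\}$, only after which the chords of lengths $4$ and $6$ exist. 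Exhibiting one uniform formula that does this for every $\lambda$ outside the stated exceptions --- and the messier surgery for $i=1$, where deleting the four path vertices cuts the zigzag into three arcs plus two isolated vertices, since the prescribed $3$-path is not a substructure of the zigzag --- is precisely the theorem, and the proposal as written does not establish it.
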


In the following we generalize Theorem \ref{evencycle+edge}. 
More precisely, we show (Theorem \ref{evencycle+path}) that the graph $[\ell \mid m]$, 
with $\ell+m \equiv 1\pmod{4}$, has an 
$\alpha$-labeling with specific end vertices for the $m$-path, whenever
$m\geq  \ell + 5$. Lemmas \ref{0mod4cycle+path} and \ref{2mod4cycle+path} 
deal with the cases
$8\leq \ell\equiv 0 \pmod{4}$ and $10\leq \ell\equiv 2 \pmod{4}$, respectively,
by making use of  Theorem \ref{evencycle+edge}. The cases where $m\in\{4,6\}$, corresponding to the exceptions 
$(\lambda,i)\in\{(1,0), (2,1)\}$ in Theorem \ref{evencycle+edge}, are dealt with separately
in Lemmas \ref{4cycle+path} and  \ref{6cycle+path}.

\begin{lem}\label{0mod4cycle+path}
  There exists an $\AL(\lambda+\mu, 3(\lambda+\mu)+1)$  of $[4\lambda \mid 4\mu+1]$ 
  whenever $\lambda\geq 2$ and, either $\mu=0$ or $\mu\geq \lambda+1$.
\end{lem}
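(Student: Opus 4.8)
I would split the argument according to the two ranges of $\mu$, reducing the base case $\mu=0$ to the cited result and treating the long-path case $\mu\geq\lambda+1$ by a direct ``split-difference'' construction rather than by iterating the extension lemmas of Section~3. For $\mu=0$ the graph is exactly $[4\lambda\mid1]$, and the desired $\AL(\lambda,3\lambda+1)$ is precisely the $i=0$ instance of Theorem~\ref{evencycle+edge} applied with parameter $\lambda$; the only excluded value there is $\lambda=1$, which is ruled out by the hypothesis $\lambda\geq2$. A preliminary remark motivating the rest: the target end-vertices $\lambda+\mu$ and $3(\lambda+\mu)+1$ are \emph{balanced}, lying at equal distance $\lambda+\mu$ from the two ends of the interval $\II(0,\epsilon)$ with $\epsilon=4(\lambda+\mu)+1$, whereas a single application of Lemma~\ref{lem:extension1}, \ref{lem:extension2} or \ref{lem:extension2.5} to the base $\AL(\lambda,3\lambda+1)$ moves only one end relative to its boundary and so produces \emph{unbalanced} end-vertices; iterating them fails because the threshold of the ``second'' lemma is poisoned by the growth of $\epsilon$. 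This is why I would build the labeling directly.

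\textbf{The direct construction for $\mu\geq\lambda+1$.} Writing $L=\lambda+\mu$ and $\epsilon=4L+1$, the partite classes must be $\II(0,2L)$ and $\II(2L+1,\epsilon)$. The idea is to reserve a \emph{central} block of labels straddling the boundary $2L$ for the $4\lambda$-cycle, and the two \emph{outer} blocks for the $(4\mu+1)$-path, so that the cycle realizes the small differences and the path the large ones. Concretely, I would assign the central block $\II(2\mu+1,\,4\lambda+2\mu)$ (that is $4\lambda$ labels, $2\lambda$ on each side of $2L$) to a standard $\alpha$-labeling of $C_{4\lambda}$ realizing the difference band $\II(1,4\lambda)$, and the outer blocks $\II(0,2\mu)$ and $\II(4\lambda+2\mu+1,\epsilon)$ to an $\alpha$-labeled $(4\mu+1)$-path, produced by Lemma~\ref{paths}, realizing the complementary band $\II(4\lambda+1,\epsilon)$ and having its two free end-vertices placed at $\lambda+\mu$ in the low outer block and $3(\lambda+\mu)+1$ in the high outer block. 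Taking the disjoint union of these two pieces, one verifies that the partite classes glue to $\II(0,2L)$ and $\II(2L+1,\epsilon)$ and that the two difference bands glue to $\pm\II(1,\epsilon)$, so that Definition~\ref{def:AL} is satisfied. The role of the hypothesis is now transparent: $\mu\geq\lambda+1$ forces $\lambda+\mu\leq 2\mu$ and $3\lambda+3\mu+1\geq 4\lambda+2\mu+1$, i.e.\ \emph{both} prescribed end-vertices fall inside the outer (path) blocks, which is exactly where the flexibility of Lemma~\ref{paths} lets me prescribe them.

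\textbf{Main obstacle.} The delicate point is the exact book-keeping of differences. The naive block split above lets the central cycle realize differences only up to $4\lambda-1$, so the single value $4\lambda$ is missed; correcting this requires pushing one label across the central/outer boundary and rebalancing the two halves of the affected blocks. The crux of the proof is to carry out this off-by-one correction while simultaneously keeping each component a genuine $\alpha$-labeling whose partite classes are contained in, and jointly equal to, $\II(0,2L)$ and $\II(2L+1,\epsilon)$, and still landing the two free ends exactly on $\lambda+\mu$ and $3(\lambda+\mu)+1$. I expect to absorb this correction using the three admissible end-vertex families in Lemma~\ref{paths}, choosing the parity case and the parameter $i$ there so that the path's ends are nudged back to the required values after the label shift; verifying that every difference in $\II(1,\epsilon)$ is then hit exactly once is the routine-but-careful computation that finishes the argument.
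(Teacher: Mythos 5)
Your $\mu=0$ case is exactly the paper's (Theorem \ref{evencycle+edge} with $i=0$, the exception $\lambda=1$ excluded by hypothesis), but your construction for $\mu\geq\lambda+1$ has a fatal structural flaw. You place the entire $(4\mu+1)$-path on the two outer blocks $J_1=\II(0,2\mu)$ and $J_2=\II(4\lambda+2\mu+1,4\lambda+4\mu+1)$ and require it to realize the top band of differences with ends at $\lambda+\mu$ and $3(\lambda+\mu)+1$. This is impossible for $\lambda\geq 1$, and no off-by-one correction can repair it. Indeed, for any spanning path of $J_1\cup J_2$ alternating between the two parts, summing the edge differences (each internal vertex counted twice, each end once) gives $\sum_{e}\Delta e = 2\big(\sum_{v\in J_2}v-\sum_{v\in J_1}v\big)-(y-x)$ where $x\in J_1$, $y\in J_2$ are the ends; if the differences are to be exactly $\II(4\lambda+1,4\lambda+4\mu+1)$, this forces $y-x=2(2\mu+1)(4\lambda+2\mu+1)-(4\mu+1)(4\lambda+2\mu+1)=4\lambda+2\mu+1$. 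But the lemma demands end-vertices with gap $3(\lambda+\mu)+1-(\lambda+\mu)=2(\lambda+\mu)+1$, which falls short by $2\lambda$. Consistently, Lemma \ref{paths} offers you no escape: for an odd-length path both ends lie in different parts, so only its case (2) applies, and that family pins the end gap at $w_2-w_1=4\lambda+2\mu+1$. Perturbing one label across a block boundary changes these quantities by $O(1)$, not by $2\lambda$, so the "delicate book-keeping" you defer to cannot be carried out.

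The paper's layout is genuinely different and is what makes the end-vertex condition attainable. It shifts the base $\AL(\lambda,3\lambda+1)$ of $[4\lambda\mid 1]$ by $(\mu,3\mu)$, so the cycle occupies the \emph{middle} blocks $\II(\mu,2\lambda+\mu)$ and $\II(2\lambda+3\mu+1,4\lambda+3\mu+1)$ and realizes the middle difference band $\II(2\mu+1,4\lambda+2\mu+1)$; the path is then split into \emph{two} pieces from Lemma \ref{paths}, one ($P_1$) spanning the extreme outer blocks and realizing the top band, the other ($P_2$) spanning the two blocks adjacent to the centre and realizing the bottom band $\II(1,2\mu-1)$. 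After concatenating everything into one path with ends $x_1,y_2$, the key step is an edge exchange: since $y_2-x_1$ equals the difference of the surviving $1$-path edge $\{x',y'\}=\{\lambda+\mu,3(\lambda+\mu)+1\}$ of the shifted base labeling, that edge is deleted and $\{x_1,y_2\}$ is inserted, preserving the difference multiset while relocating the path's ends to precisely the prescribed vertices. Your proposal has no analogue of this relocation device, and without it the sum obstruction above shows the prescribed ends are unreachable.
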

\begin{proof} 
   By Theorem \ref{evencycle+edge}, there exists an $\AL(\lambda, 3\lambda+1)$ of
   $[4\lambda\mid 1]$, say $\Gamma$. Note that $\Gamma$ is the desired $\alpha$-labeling when $\mu=0$, hence 
   we can assume that $\mu\geq \lambda+1$. 
   
   By Lemma \ref{paths} there exists an $\AL(J_{i,1}, J_{i,2}, x_i, y_i)$ of $[\;-\mid 2\mu-1]$, say $P_i$,
   whose parameters are the following:     
   \[
     \begin{array}{|c||c|c|}
     \hline \rule{0pt}{1\normalbaselineskip}  
       i &  1   &  2  \\[0.5ex] \hline \hline \rule{0pt}{1\normalbaselineskip}
       J_{i,1}  &  \II(0,\mu-1)  &  \II(2\lambda+\mu+1, 2\lambda+2\mu)       
       \\[0.5ex] \hline \rule{0pt}{1\normalbaselineskip}
       J_{i,2}  & \II(4\lambda+ 3\mu+2, 4\lambda+ 4\mu +1) & \II(2\lambda +2\mu +1, 2\lambda +3\mu)
       \\[0.5ex] \hline \rule{0pt}{1\normalbaselineskip}
       x_i     & \lambda   & 3\lambda+\mu+1  
       \\[0.5ex] \hline \rule{0pt}{1\normalbaselineskip}
       y_i     &  5\lambda+3\mu+2    &  3\lambda +2\mu+1         
       \\ [0.5ex] \hline 
     \end{array}
   \]   
   Also, $\Gamma'= \Gamma + (\mu, 3\mu)$ is an $\AL(J'_1, J'_2, x',y')$ of $[4\lambda \mid 1]$ where
   \begin{align*}
     J'_1=\II(\mu, 2\lambda +\mu),\;\;\; J'_2=\II(2\lambda+ 3\mu +1, 4\lambda +3\mu +1),\\
    \text{and}\; (x',y') = (\lambda + \mu, 3\lambda +3\mu +1).
   \end{align*} 
   Considering that the vertex sets of $P_1$, $P_2$ and $\Gamma'$ partition 
   $\II(0, 4\lambda +4\mu +1)$, and letting 
   $\mathcal{E}=\big\{\{x', y_1\}, \{y', x_2\}\big\}$, 
   the graph $H = \mathcal{E}\ \cup\ P_1\ \cup\ P_2\ \cup\ \Gamma'$ (obtained 
   by extending the $1$-path in $\Gamma'$ using $P_1$, $P_2$ and the edges in 
   $\mathcal{E}$) is  an 
   $\AL(x_1, y_2)$ of $[4\lambda \mid 4\mu +1]$. Indeed, $H$ is a bipartite graph whose parts are the intervals
   $J_{1,1} \cup J_{2,1} \cup J'_1 = \II(0, 2\lambda +2\mu)$ and 
   $J_{1,2} \cup J_{2,2} \cup J'_2 = \II(2\lambda +2\mu +1, 4\lambda+4\mu+1)$.  Also, 
   \begin{align*}
     \Delta H &= \Delta \mathcal{E}\ \cup\ \Delta P_1\ \cup\ \Delta P_2\ \cup\ \Delta \Gamma' = \\
              &=\pm \big( \{2\mu, 4\lambda +2\mu+2\} \ \cup\ 
                          \II(4\lambda +2\mu+3, 4\lambda +4\mu+1) \ \cup\  \II(1, 2\mu-1) \\
              &\;\;\;\;\;\;\;\;\; \cup\  \II(2\mu+1, 4\lambda +2\mu +1)\big) =\pm \II(1, 4\lambda +4\mu +1).  
   \end{align*}
   Since $y_2 - x_1 = y' - x'$, we obtain the desired
   $\alpha$-labeling of $[4\lambda \mid 4\mu+1]$ from $H$ by 
   replacing the edge $\{x', y'\}$ 
   -- originally belonging to $\Gamma'$ and hence to the path in $H$ -- 
   with the edge $\{x_1, y_2\}$ joining the ends of the path in $H$.
\end{proof}

\begin{lem}\label{2mod4cycle+path}
   There exists an $\AL(\lambda+\mu, 3(\lambda+\mu)+1)$  of $[4\lambda -2 \mid 4\mu+3]$ 
  whenever $\lambda\geq 3$ and, either $\mu=0$ or $\mu\geq \lambda$.
\end{lem}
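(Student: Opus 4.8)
The plan is to mimic the proof of Lemma \ref{0mod4cycle+path} almost verbatim, adjusting only the constants that arise from the parity difference between a cycle of length $\equiv 0 \pmod 4$ and one of length $\equiv 2 \pmod 4$. The target graph $[4\lambda - 2 \mid 4\mu + 3]$ has a $(4\mu+3)$-path rather than a $(4\mu+1)$-path, so the path will be assembled from two $\alpha$-labeled paths of \emph{odd} length together with a small seed $\alpha$-labeling of the cycle. First I would invoke Theorem \ref{evencycle+edge} with $i=1$: for $\lambda \geq 3$ this produces an $\AL(\lambda-1, 3\lambda+1)$ (equivalently, after relabeling, an $\AL(\lambda, 3\lambda+1)$-type labeling) of $[4\lambda - 2 \mid 3]$, whose $3$-path $P = \lambda-1, 3\lambda+1, \lambda, 3\lambda$ is explicitly known. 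The restriction $\lambda \geq 3$ in the statement is precisely what avoids the exceptional triples $\{(1,1),(2,1)\}$ of Theorem \ref{evencycle+edge}, so the seed labeling exists. This handles the base case $\mu = 0$ directly.

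For $\mu \geq \lambda$, the core step is to extend the $3$-path of this seed into a $(4\mu+3)$-path while preserving the $\alpha$-labeling property. Following the template of Lemma \ref{0mod4cycle+path}, I would first translate the seed labeling via the $\AL$-shift operation $\Gamma + (a_1, a_2)$ (defined in Section \ref{subsec:alpha}) so that its two partite intervals sit in the correct position inside the final interval $\II(0, 4\lambda + 4\mu + 1)$, leaving room on both sides for the two extension paths. Then I would apply Lemma \ref{paths} twice to obtain two $\alpha$-labeled paths $P_1$ and $P_2$, each of length $2\mu$ (so odd-length when combined with the seed's $3$-path to total $4\mu+3$), with end-vertices chosen to match the seed path's ends and to tile the remaining vertices; the two consecutive partite intervals of each $P_i$ must be placed so that $P_1$, $P_2$, and the shifted seed together partition both halves of $\II(0, 4\lambda+4\mu+1)$. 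Joining everything with two connecting edges $\mathcal{E}$, computing $\Delta H$ as a disjoint union that telescopes to $\pm\II(1, 4\lambda+4\mu+1)$, and finally swapping the seed's path-edge for the edge joining the global path-ends (exactly the closing trick in Lemma \ref{0mod4cycle+path}) yields the claimed $\AL(\lambda+\mu, 3(\lambda+\mu)+1)$ of $[4\lambda-2\mid 4\mu+3]$.

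The main obstacle, and the only genuinely non-mechanical part, will be pinning down the exact interval endpoints and the end-vertices $x_i, y_i$ of $P_1, P_2$ so that three conditions hold simultaneously: (a) the vertex sets partition $\II(0, 4\lambda+4\mu+1)$ with no overlaps or gaps; (b) the difference multisets $\Delta P_1, \Delta P_2, \Delta \Gamma'$, together with the two edges in $\mathcal{E}$, cover $\pm\II(1, 4\lambda+4\mu+1)$ exactly once; and (c) the bottleneck difference $2\mu$ (which in the $4\lambda$ case was supplied by an edge of $\mathcal{E}$) is correctly accounted for given the shifted cycle-length $4\lambda - 2$. The parity shift means the constants $2\lambda$ appearing throughout Lemma \ref{0mod4cycle+path} become $2\lambda - 1$, and I expect the feasibility of Lemma \ref{paths} (its hypotheses $|\gamma_1 - \gamma_2| \leq 1$ and the admissible index $i$) to force the hypothesis $\mu \geq \lambda$ rather than $\mu \geq \lambda + 1$. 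I would verify these bookkeeping constraints by laying out the analogue of the parameter table in Lemma \ref{0mod4cycle+path} and checking the four difference-intervals concatenate without gaps; this is routine but must be done carefully because a single off-by-one in the endpoints breaks the exact-cover property that defines an $\alpha$-labeling.
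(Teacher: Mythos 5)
Your proposal follows the paper's proof essentially verbatim: seed $\alpha$-labeling of $[4\lambda-2\mid 3]$ from Theorem \ref{evencycle+edge} with $i=1$, translation by $(\mu,3\mu)$, two extension paths from Lemma \ref{paths}, two connecting edges, and the final edge swap to move the path-ends onto $\lambda+\mu$ and $3(\lambda+\mu)+1$; your prediction that the end-vertex constraint of Lemma \ref{paths} is what relaxes $\mu\geq\lambda+1$ to $\mu\geq\lambda$ is also exactly what happens (one needs $x_1=\lambda-1\in\II(0,\mu-1)$). Two small corrections: the extension paths must have length $2\mu-1$ (not $2\mu$), since $3+2(2\mu-1)+2=4\mu+3$, and the base case $\mu=0$ is not a relabeling but the same edge-swap trick, replacing the middle edge $\{3\lambda+1,\lambda\}$ of the seed's $3$-path by $\{\lambda-1,3\lambda\}$, which carries the same difference $2\lambda+1$.
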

\begin{proof} We proceed similarly to the proof of Lemmna \ref{0mod4cycle+path}.
 By Theorem \ref{evencycle+edge}, there exists an $\alpha$-labeling of
   $[4\lambda-2\mid 3]$, say $\Gamma$, whose $3$-path is $\lambda-1, 3\lambda+1, \lambda, 3\lambda$. 
   When $\mu=0$, the desired $\alpha$-labeling 
   of $[4\lambda-2\mid 3]$ is obtained  from $\Gamma$ by replacing
   the edge $\{3\lambda+1, \lambda\}$ with $\{\lambda-1, 3\lambda\}$. 
   
   We now assume that
   $\mu\geq \lambda$. 
   By Lemma \ref{paths} there exists an $\AL(J_{i,1}, J_{i,2}, x_i, y_i)$ of $[\;-\mid 2\mu-1]$, say $P_i$,
   whose parameters are the following:      
   \[
     \begin{array}{|c||c|c|}
     \hline \rule{0pt}{1\normalbaselineskip}  
       i &  1   &  2  \\[0.5ex] \hline \hline \rule{0pt}{1\normalbaselineskip}
       J_{i,1}  &  \II(0,\mu-1)  &  \II(2\lambda+\mu+1, 2\lambda+2\mu)       
       \\[0.5ex] \hline \rule{0pt}{1\normalbaselineskip}
       J_{i,2}  & \II(4\lambda+3\mu+2, 4\lambda+4\mu+1) & \II(2\lambda +2\mu +1, 2\lambda +3\mu)
       \\[0.5ex] \hline \rule{0pt}{1\normalbaselineskip}
       x_i     & \lambda -1   & 3\lambda+\mu
       \\[0.5ex] \hline \rule{0pt}{1\normalbaselineskip}
       y_i     &  5\lambda+3\mu+1    &  3\lambda +2\mu         
       \\ [0.5ex] \hline 
     \end{array}
   \]      
   Also, $\Gamma'= \Gamma + (\mu, 3\mu)$ is an $\AL(J'_1, J'_2, x',y')$ of $[4\lambda-2 \mid 3]$ where
   \begin{align*}
     J'_1=\II(\mu, 2\lambda +\mu),\;\;\; J'_2=\II(2\lambda+ 3\mu +1, 4\lambda +3\mu +1),\\
    \text{and}\; (x',y') = (\lambda +\mu -1, 3\lambda+3\mu).
   \end{align*} 
   Furthermore, the $3$-path of $\Gamma'$ is $Q'= x', y'', x'', y'$, where
   $y'' = 3(\lambda+\mu)+1$ and
   $x'' = \lambda +\mu$.
   Considering that the vertex sets of $P_1$, $P_2$ and $\Gamma'$ partition 
   $\II(0, 4\lambda +4\mu +1)$, and letting 
   $\mathcal{E}=\big\{\{x', y_1\}, \{y', x_2\}\big\}$, 
   the graph $H = \mathcal{E}\ \cup\ P_1\ \cup\ P_2\ \cup\ \Gamma'$  is an 
   $\AL(x_1, y_2)$ of $[4\lambda -2 \mid 4\mu +3]$. 

   Since $y_2 - x_1 = y'' - x''$, we obtain the desired
   $\alpha$-labeling of $[4\lambda -2 \mid 4\mu+3]$ from $H$ by 
   replacing the edge $\{x'', y''\}$ 
   -- belonging to the path in $H$ -- 
   with the edge $\{x_1, y_2\}$.
\end{proof}

\begin{lem}\label{4cycle+path}
   There exists an $\AL(\mu+1, 3\mu+4)$  of $[4 \mid 4\mu+1]$ 
   for every $\mu\geq 2$.
\end{lem}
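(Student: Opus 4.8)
The plan is to treat this as the exceptional case $\lambda=1$ of Lemma~\ref{0mod4cycle+path}. The uniform construction there cannot be reused verbatim: it is fed by an $\alpha$-labeling $\AL(\lambda,3\lambda+1)$ of $[4\lambda\mid 1]$, and for $\lambda=1$ the graph $[4\mid 1]$ admits no $\alpha$-labeling (its unique $4$-cycle is a $K_{2,2}$, whose four differences — which always split into two pairs of equal sum — together with the single path-difference cannot exhaust $\pm\II(1,5)$). Since the prescribed end-vertices $(\mu+1,3\mu+4)$ are exactly the values $(\lambda+\mu,3(\lambda+\mu)+1)$ of Lemma~\ref{0mod4cycle+path} at $\lambda=1$, and since no single application of the extension Lemmas~\ref{lem:extension1}--\ref{lem:extension2.5} reproduces this pair under an increment of $4$ in the path length, I would instead give a \emph{direct} explicit $\alpha$-labeling for every $\mu\ge 2$. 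I would work with the default intervals of Definition~\ref{def:AL} attached to $\epsilon=4\mu+5$, namely $J_1=\II(0,2\mu+2)$ and $J_2=\II(2\mu+3,4\mu+5)$.

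Concretely, I would first seat the $4$-cycle on the extreme vertices as $(0,\,4\mu+3,\,1,\,4\mu+5)$. This cycle is bipartite with respect to $(J_1,J_2)$, and its four edges produce exactly the differences $\pm\II(4\mu+2,4\mu+5)$, i.e.\ the four largest. It then remains to label a path on the leftover vertices $\II(2,4\mu+2)\cup\{4\mu+4\}$ — whose $J_1$-part is $\II(2,2\mu+2)$ and whose $J_2$-part is $\II(2\mu+3,4\mu+2)\cup\{4\mu+4\}$ — so as to realize the remaining differences $\pm\II(1,4\mu+1)$ with end-vertices $\mu+1$ and $3\mu+4$. I would describe this path as two interlaced monotone runs, one descending through $J_1$ and one ascending through $J_2$, threaded by the (forced) edge of difference $4\mu+1$, and then verify the three conditions of Definition~\ref{def:AL}: that the partite sets are $J_1,J_2$, that the prescribed vertices are the path ends, and that the difference list of the whole graph is $\pm\II(1,4\mu+5)$. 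For $\mu=2$ one checks directly that the $4$-cycle $(0,11,1,13)$ together with the path $3,12,6,7,5,8,4,9,2,10$ is such a labeling.

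The delicate point — and the reason a separate lemma is needed — is the bookkeeping for the ``rigid'' large differences of the path together with the prescribed end-vertices. On the leftover vertex set the difference $4\mu+1$ is realizable \emph{only} by the edge $\{3,4\mu+4\}$ (for $a\in\II(2,2\mu+2)$ the partner $a+4\mu+1$ lies in the $J_2$-part only when $a=3$), and $4\mu,4\mu-1,\dots$ have very few realizations as well; moreover the required end-vertex $3\mu+4$ lies in the \emph{interior} of the natural ascending run $\II(2\mu+3,4\mu+2)$, while the isolated high vertex $4\mu+4$ (present, in contrast to $4\mu+3$, which sits in the cycle) must be absorbed by one of these large differences. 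The main obstacle is thus to pin the zigzag so that it terminates exactly at $3\mu+4$, uses each of $1,\dots,4\mu+1$ once, and stays bipartite; I expect to achieve this by a local rerouting of the two runs near $3\mu+4$, treating any residual small values of $\mu$ by an explicit labeling.
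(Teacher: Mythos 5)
Your proposal correctly identifies that $[4\mid 1]$ is an exception to Theorem \ref{evencycle+edge} and that a direct construction is needed, and your explicit labeling for $\mu=2$ checks out (the cycle $(0,11,1,13)$ gives $\pm\II(10,13)$, the path gives $\pm\II(1,9)$ with ends $3$ and $10$). But the proof has a genuine gap: for general $\mu$ the construction is never actually produced. The entire content of the lemma is the existence of a path on $\II(2,4\mu+2)\cup\{4\mu+4\}$, bipartite with respect to $\bigl(\II(2,2\mu+2),\,\II(2\mu+3,4\mu+2)\cup\{4\mu+4\}\bigr)$, realizing each difference in $\II(1,4\mu+1)$ exactly once with ends $\mu+1$ and $3\mu+4$; you describe it only as ``two interlaced monotone runs \ldots threaded by the forced edge of difference $4\mu+1$'' and then defer the crux (``I expect to achieve this by a local rerouting of the two runs near $3\mu+4$, treating any residual small values of $\mu$ by an explicit labeling''). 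That deferred step is exactly what must be proved: both prescribed ends sit in the interiors of their natural monotone runs, and the $J_2$-side has a hole at $4\mu+3$ and an outlier at $4\mu+4$, so Lemma \ref{paths} cannot be invoked on the leftover set and no cited result supplies the needed path. Until the rerouting is written down and its difference list verified for all $\mu$, the lemma is not established.

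The difficulty is partly self-inflicted by seating the $4$-cycle on the extreme labels $\{0,1,4\mu+3,4\mu+5\}$, which consumes the four largest differences and leaves a ragged vertex set for the path. The paper instead places the $4$-cycle in the middle, as $(\mu+2,3\mu+1,\mu+4,3\mu+2)$, so that it covers the middle differences $\pm\II(2\mu-3,2\mu)$; the extremes then remain free for an explicit zigzag producing all of $\pm\II(2\mu+4,4\mu+5)$ and ending at $\mu+1$, the small differences $\pm\II(1,2\mu-5)$ are obtained from Lemma \ref{paths} applied to the clean sub-intervals $\II(\mu+5,2\mu+2)$ and $\II(2\mu+3,3\mu)$, and two short connecting paths absorb the few remaining differences, with $\mu\in\{2,3\}$ handled by explicit labelings. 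If you want to keep your cycle placement, you must replace the ``local rerouting'' sentence by an explicit family of paths (with a full verification of the partite sets, the end-vertices, and the multiset of differences) valid for all $\mu$ beyond your finitely many checked cases.
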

\begin{proof} When $\mu\in\{2,3\}$ the desired $\alpha$-labeling is the union of the cycle $C$ and the path $Q$ given below:
\begin{align*}
   \mu=2: &\; C= (4, 7, 6,  8),\; Q = 10, 5, 13, 0, 12, 1, 11, 2, 9, 3,\\
   \mu=3: &\; C= (6, 9, 8, 10),\; Q = 4, 12, 7, 14, 5, 11, 0, 17, 1, 16, 2, 15, 3, 13.
\end{align*}
   Now let $\mu\geq 4$. We set $C= (\mu + 2, 3\mu+1, \mu+4, 3\mu+2)$ and construct
   a $(4\mu+1)$-path $Q$, disjoint from $C$, by joining the paths
   $P_1,  P_2$ and $P_3$ defined below: 
   \begin{align*}
     P_1 & = 3\mu + 4, \mu + 3, 3\mu - 1, \\
     P_3 & = 2\mu+1, 4\mu+5, 0, \ldots, 4\mu+5 - j, j, \ldots, 3\mu+5, \mu, 3\mu+3, \mu + 1,
   \end{align*}  
   while $P_2$ is an $\AL(J_{1}, J_{2}, 2\mu+1, 3\mu-1)$ of 
   $[\;-\mid 2\mu-5]$, with 
   $J_1 = \II(\mu+5, 2\mu+2)$ and $J_2 = \II(2\mu+3, 3\mu)$, 
   which exists by Lemma \ref{paths}.
     
   We claim that $C \ \cup\ Q$ is the desired 
   $\AL(\mu+1, 3\mu+4)$  of $[4 \mid 4\mu+1]$. 
   Indeed, $C \ \cup\ Q$ is a bipartite graph whose parts are 
   $\II(0, 2\mu+2)$ and $\II(2\mu+3, 4\mu+5)$. 
   Also, the ends of $Q$ are $\mu+1$ and $3\mu+4$.
   Finally, considering that
   \[\Delta C= \pm\II(2\mu-3, 2\mu),\;\; \Delta P_1 = \pm\{2\mu-4, 2\mu+1\}, \;\;
   \Delta P_2 = \pm \II(1, 2\mu-5)
   \]
   \[\Delta P_3 = \pm \II(2\mu+4, 4\mu+5)\ \cup\ \pm \{2\mu+2, 2\mu+3\},
   \]
   it follows that $\Delta(C \ \cup\ Q) = \II(1, 4\mu+5)$,
   and this completes the proof.
\end{proof}
\begin{lem}\label{6cycle+path}
   There exists an $\AL(\mu+2, 3\mu+7)$  of $[6 \mid 4\mu+3]$ 
   for every $\mu\geq 2$.
\end{lem}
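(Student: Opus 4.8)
The plan is to mimic the construction in Lemma \ref{4cycle+path}, exhibiting a $6$-cycle $C$ and a disjoint $(4\mu+3)$-path $Q$ whose union is the desired $\alpha$-labeling. Since $[6\mid 4\mu+3]$ has size $\epsilon=4\mu+9$, by the convention for $\AL(x,y)$ I seek a labeling on $\II(0,4\mu+9)$ with parts $J_1=\II(0,2\mu+4)$ and $J_2=\II(2\mu+5,4\mu+9)$, difference set $\pm\II(1,4\mu+9)$, and the $(4\mu+3)$-path ending at $\mu+2\in J_1$ and $3\mu+7\in J_2$. The vertices and differences contributed by $C$ and by $Q$ must partition these, so the six edges of $C$ account for six of the differences and $Q$ for the remaining $4\mu+3$.

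The key difficulty, which distinguishes this case from Lemma \ref{0mod4cycle+path}, forces the exclusion of $(\lambda,i)=(2,1)$ in Theorem \ref{evencycle+edge}, and is the reason Lemma \ref{2mod4cycle+path} requires $\lambda\geq 3$, is a parity obstruction: in any bipartite $6$-cycle the sum of the six edge-differences equals $2(\sum\text{upper}-\sum\text{lower})$ and is therefore even, whereas any six consecutive integers sum to an odd number. Hence $C$ cannot carry a block of six consecutive differences, and the surrounding path must absorb this defect. I would instead let $C$ carry a near-consecutive block with a single gap; concretely the cycle $C=(\mu+3,3\mu+3,\mu+4,3\mu+2,\mu+5,3\mu+6)$ has $\Delta C=\pm\{2\mu-3,2\mu-2,2\mu-1,2\mu,2\mu+1,2\mu+3\}$, whose sum $12\mu-2$ is even and which misses only $2\mu+2$. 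This $C$ uses lower vertices $\mu+3,\mu+4,\mu+5$ and upper vertices $3\mu+2,3\mu+3,3\mu+6$, so it lies correctly inside $J_1\cup J_2$ once $\mu\geq 4$ (one needs $3\mu+2\geq 2\mu+5$) and, crucially, avoids the prescribed path-ends $\mu+2$ and $3\mu+7$.

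It then remains to build $Q$ on the complementary vertex set with $\Delta Q=\pm\big(\II(1,2\mu-4)\cup\{2\mu+2\}\cup\II(2\mu+4,4\mu+9)\big)$ and free ends $\mu+2,3\mu+7$. Following the template of Lemma \ref{4cycle+path}, I would assemble $Q$ from three segments spliced by two connecting edges: a generic path $P_2$ obtained from Lemma \ref{paths}, placed on two suitable subintervals so that $\Delta P_2$ supplies the block of small differences $\pm\II(1,2\mu-4)$ (possibly all but its largest value, delegated to $P_1$), together with an explicit head $P_1$ (one of whose ends is $3\mu+7$) and an explicit tail $P_3$ (ending at $\mu+2$) that between them realize the large block $\II(2\mu+4,4\mu+9)$ and the single parity-forced value $2\mu+2$. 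The final check is routine but delicate: one verifies that the vertex labels of $C,P_1,P_2,P_3$ together partition $\II(0,4\mu+9)$ respecting the bipartition, and that the union of all difference-sets is exactly $\pm\II(1,4\mu+9)$.

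The small cases $\mu\in\{2,3\}$, where the generic segment $P_2$ is too short (and, for $\mu=2$, the cycle cannot even be embedded, since $3\mu+2<2\mu+5$), would be handled by exhibiting explicit labelings as a cycle plus a path, exactly as in Lemma \ref{4cycle+path}. I expect the main obstacle to be this combined bookkeeping---choosing the head and tail paths so that, around the gap at $2\mu+2$, all differences dovetail into one clean interval while the free ends land exactly on $\mu+2$ and $3\mu+7$---rather than any conceptual hurdle beyond the parity observation above.
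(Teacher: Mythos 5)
Your parity observation is correct and is indeed the real conceptual point here: a bipartite $6$-cycle has an even sum of edge-differences, while six consecutive integers sum to an odd number, so the cycle must skip one value and the path must pick it up. (The paper's own cycle $C=(\mu+1,3\mu+9,\mu+3,3\mu+6,\mu+4,3\mu+8)$ does exactly this, realizing $\pm\{2\mu+2,2\mu+3,2\mu+4,2\mu+6,2\mu+7,2\mu+8\}$ and skipping $2\mu+5$.) Your candidate cycle is a legitimate bipartite $6$-cycle with differences $\pm\{2\mu-3,\dots,2\mu+1,2\mu+3\}$, and your bookkeeping of the residual difference set $\pm\bigl(\II(1,2\mu-4)\cup\{2\mu+2\}\cup\II(2\mu+4,4\mu+9)\bigr)$ for the path is consistent.

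However, the proof is not complete: the entire substantive content of the lemma is the existence of the $(4\mu+3)$-path $Q$ on the complementary vertex set with exactly those differences and with free ends $\mu+2$ and $3\mu+7$, and you never construct it. The ``explicit head $P_1$'' and ``explicit tail $P_3$'' that are supposed to realize the large block $\II(2\mu+4,4\mu+9)$ together with the isolated value $2\mu+2$ are left unspecified, and it is precisely in choosing these (so that the vertex labels partition $\II(0,4\mu+9)$ respecting the bipartition, the differences dovetail, and the ends land on $\mu+2$ and $3\mu+7$) that the work lies; asserting that this is ``routine but delicate'' does not establish existence. The small cases $\mu\in\{2,3\}$ are likewise promised but not exhibited. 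For comparison, the paper completes this step by taking two $2\mu$-edge paths from Lemma~\ref{paths} with explicitly prescribed intervals and end vertices $(x_i,y_i)$, and a three-edge matching $\{\{x_1,x_2\},\{y_1,3\mu+7\},\{y_2,\mu+2\}\}$ that splices them into a $(4\mu+3)$-path whose new degree-one vertices are exactly $\mu+2$ and $3\mu+7$; the single sporadic case it must handle explicitly is $\mu=4$, where the Lemma~\ref{paths} parameters degenerate (the forbidden choice $i=\gamma_1/2$), not $\mu\in\{2,3\}$. Until you write down your head and tail paths and verify the partition of vertices and differences, the argument has a genuine gap.
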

\begin{proof} When $\mu=4$, the desired $\alpha$-labeling is the union of the cycle $C$ and the path $Q$ given below:
\begin{align*}
   & C= (2, 23, 4, 20, 5, 22),\\
   & Q = 6, 18, 9, 15, 10, 17, 7, 21, 3, 25, 0, 24, 1, 14, 11, 13, 12, 16, 8, 19.
\end{align*}
   Now let $\mu\geq 2$ with $\mu\neq 4$. 
   By Lemma \ref{paths}, there is an $\AL(J_{i,1}, J_{i,2}, x_i, y_i)$ of $[\;-\mid 2\mu]$, say $P_i$,
   whose parameters are the following:      
   \[
     \begin{array}{|c||c|c|}
     \hline \rule{0pt}{1\normalbaselineskip}  
       i &  1   &  2  \\[0.5ex] \hline \hline \rule{0pt}{1\normalbaselineskip}
       J_{i,1}  &  \II(0,\mu)  &  \II(\mu+5, 2\mu+4)       
       \\[0.5ex] \hline \rule{0pt}{1\normalbaselineskip}
       J_{i,2}  & \II(3\mu+10, 4\mu+9) & \II(2\mu +5, 3\mu+5)
       \\[0.5ex] \hline \rule{0pt}{1\normalbaselineskip}
       x_i     &  2   & 2\mu +7
       \\[0.5ex] \hline \rule{0pt}{1\normalbaselineskip}
       y_i     &  \mu-2    &  3\mu+3         
       \\ [0.5ex] \hline 
     \end{array}
   \]      
   Consider the $6$-cycle $C$ and the matching $\mathcal{E}$ defined as follows:
   \begin{align*}
     & C =(\mu+1, 3\mu+9, \mu+3, 3\mu+6, \mu+4, 3\mu+8), \\
     & \mathcal{E}= \{\{x_1, x_2\}, \{y_1, 3\mu+7\}, \{y_2, \mu+2\}\}
   \end{align*}
   and let $H$ be the union of $P_1, P_2, \mathcal{E}$ and $C$. 
     
   We claim that $H$ is the desired 
   $\AL(\mu+2, 3\mu+7)$  of $[6 \mid 4\mu+3]$. Indeed, $H$ is a bipartite graph whose parts are
   $J_1 = \II(0,2\mu+4)$ and $J_1 = \II(2\mu+4, 4\mu+9)$. 
   Also, $P_1 \ \cup\  P_2\ \cup\ \mathcal{E}$ is a $(4\mu+3)$-path, disjoint from $C$, whose ends are  
   $\mu+2$ and $3\mu+7$.  One can finally check that $\Delta H = \pm\II(1, 4\mu+9)$, and this completes the proof.
\end{proof}

The above four lemmas can be summarized as follows.

\begin{thm}\label{evencycle+path}
  Let $\ell$ and $m$ be integers with $\ell\geq 4$ even, $m\geq 1$ odd and $\ell+m \equiv 1\pmod{4}$. 
  Then there exists an 
  $\AL\left(\frac{\ell+m-1}{4}, 3\left(\frac{\ell+m-1}{4}\right) + 1\right)$ 
  of $[\ell \mid m]$
  when either $m\in\{1,3\}$ and $\ell\geq 8$, or $m\geq  \ell + 5$.
\end{thm}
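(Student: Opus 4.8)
The plan is to derive this theorem as an immediate consequence of the four preceding lemmas, each of which already produces an $\alpha$-labeling with exactly the prescribed pair of end-vertices. First I would set $k = \frac{\ell + m - 1}{4}$, which is an integer by the hypothesis $\ell + m \equiv 1 \pmod 4$, and observe that in every one of Lemmas \ref{0mod4cycle+path}--\ref{6cycle+path} the resulting labeling is an $\AL(k, 3k+1)$ of the relevant graph. This is a routine substitution: for instance, Lemma \ref{0mod4cycle+path} gives $[4\lambda \mid 4\mu+1]$ with $k = \lambda+\mu$ and end-vertices $(\lambda+\mu,\, 3(\lambda+\mu)+1) = (k, 3k+1)$, and the analogous check works for the other three lemmas (in Lemma \ref{4cycle+path}, $k=\mu+1$ and $3\mu+4 = 3k+1$; in Lemma \ref{6cycle+path}, $k=\mu+2$ and $3\mu+7 = 3k+1$). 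Thus it suffices to show that under the stated hypotheses the parameters always fall into the range of one of these four lemmas.

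The bulk of the argument is then a case analysis on $\ell \bmod 4$ together with the size of $m$. When $\ell \equiv 0 \pmod 4$, I would write $\ell = 4\lambda$; the congruence forces $m \equiv 1 \pmod 4$, so $m = 4\mu+1$. When $\ell \equiv 2 \pmod 4$, I would write $\ell = 4\lambda-2$, in which case $m \equiv 3 \pmod 4$, so $m = 4\mu+3$. The condition $m \geq \ell + 5$ then translates, respectively, into $\mu \geq \lambda+1$ and $\mu \geq \lambda$, which are precisely the range hypotheses of Lemmas \ref{0mod4cycle+path} and \ref{2mod4cycle+path}. Likewise, the branch $m \in \{1,3\}$ with $\ell \geq 8$ corresponds to $\mu = 0$ in these two lemmas, with $\lambda \geq 2$ when $\ell \equiv 0 \pmod 4$ and $\lambda \geq 3$ when $\ell \equiv 2 \pmod 4$, both guaranteed by $\ell \geq 8$.

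The only delicate point — and the one place I would argue with care — is the two smallest cycle lengths, where the general lemmas do not apply because they require $\lambda \geq 2$ or $\lambda \geq 3$. If $\ell = 4$ (so $\lambda = 1$), then $m \geq \ell + 5 = 9$ forces $\mu \geq 2$, and Lemma \ref{4cycle+path} supplies the required $\AL(\mu+1, 3\mu+4) = \AL(k, 3k+1)$ of $[4 \mid 4\mu+1]$. If $\ell = 6$ (so $\lambda = 2 < 3$), then $m \geq 11$ forces $\mu \geq 2$, and Lemma \ref{6cycle+path} supplies $\AL(\mu+2, 3\mu+7) = \AL(k, 3k+1)$ of $[6 \mid 4\mu+3]$. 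Crucially, the requirement $\ell \geq 8$ excludes $\ell \in \{4,6\}$ from the $m \in \{1,3\}$ branch, so these two exceptional lengths arise only in the $m \geq \ell + 5$ branch, where the dedicated Lemmas \ref{4cycle+path} and \ref{6cycle+path} cover them exactly. Collecting the four cases completes the proof. There is no genuine obstacle here beyond verifying that the parameter thresholds line up and that the two short-cycle exceptions are absorbed by the special lemmas; the theorem is really a bookkeeping summary of the preceding constructions.
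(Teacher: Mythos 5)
Your proposal is correct and is exactly the argument the paper intends: the paper states Theorem \ref{evencycle+path} as a summary of Lemmas \ref{0mod4cycle+path}--\ref{6cycle+path}, with the same split on $\ell \bmod 4$, the same translation of $m\geq\ell+5$ into $\mu\geq\lambda+1$ resp.\ $\mu\geq\lambda$, and the same use of Lemmas \ref{4cycle+path} and \ref{6cycle+path} to absorb the exceptional lengths $\ell\in\{4,6\}$. Your parameter checks (including that $\ell\geq 8$ with $\ell\equiv 2\pmod 4$ forces $\lambda\geq 3$) all line up, so nothing is missing.
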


We are now able to prove the main results of this section.

\begin{thm}\label{thm:main1:1}
Let $L$ be a nonempty list of even integers greater than~$2$.
Then there is an 
$\AL\left(x, y\right)$
of $\left[L \;\Big|\; \epsilon -\sum_{\ell\in L} \ell \right]$ where\\
\[
\epsilon = 2|L|(\max(L)+3)-1, \;\;\;
x = \frac{\max(L)}{2}+1,\;\; \;
y=\frac{\epsilon+\max(L)+3}{2}.
\]
\end{thm}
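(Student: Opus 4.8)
The plan is to induct on $|L|$ while keeping $M = \max(L)$ fixed, building the labeling by amalgamating the labeling for a smaller list with an $\alpha$-labeling of a single even cycle together with a path. Write $\epsilon_k = 2k(M+3)-1$ for the target size when $|L|=k$; note that each $\epsilon_k$ is odd and that $\epsilon_{k+1}=\epsilon_k+2(M+3)$. For the base case $|L|=1$, say $L=\{M\}$, the path length is $m=\epsilon_1-M=M+5$, and since $M\geq 4$ is even we have $M+m=2M+5\equiv 1\pmod 4$ and $m\geq M+5$. Thus Theorem~\ref{evencycle+path} yields an $\AL\!\left(\tfrac{M+m-1}{4},\,3\tfrac{M+m-1}{4}+1\right)$ of $[M\mid m]$; as $\tfrac{M+m-1}{4}=\tfrac{M}{2}+1$, this is exactly the claimed $\AL(x,y)$ with $x=\tfrac{M}{2}+1$ and $y=\tfrac{3M}{2}+4=\tfrac{\epsilon_1+M+3}{2}$.

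For the inductive step I would write $L=L'\cup\{\ell\}$ with $|L'|=k\geq 1$, $\max(L')=M$ (always arrangeable: discard a non-maximal cycle, or one copy of $M$ if all cycles equal $M$), and $\ell$ even with $4\leq \ell\leq M$. By the inductive hypothesis there is a labeling $\Gamma'$ of $[L'\mid m']_{\epsilon_k}$ in standard form, with partite sets $\II(0,\tfrac{\epsilon_k-1}{2})$ and $\II(\tfrac{\epsilon_k+1}{2},\epsilon_k)$, and path-ends $x_k=\tfrac{M}{2}+1$ (lower part) and $y_k=\tfrac{\epsilon_k+M+3}{2}$ (upper part). Applying Theorem~\ref{evencycle+path} to $[\ell\mid p]$ with $p=2M+5-\ell$ (so $\ell+p=2M+5\equiv 1\pmod 4$ and $p\geq \ell+5$, since $\ell\leq M$) gives an $\alpha$-labeling $\Phi$ of size $\epsilon_\Phi=2M+5$ whose path-ends are $\tfrac{M}{2}+1$ and $\tfrac{3M}{2}+4$ \emph{independently of $\ell$}, with partite sets $\II(0,M+2)$ and $\II(M+3,2M+5)$. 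The point of the choice $p=2M+5-\ell$ is precisely to make both the size and the path-ends of the inserted piece independent of $\ell$.

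Next I would amalgamate $\Gamma'$ and $\Phi$ into one labeling $\Gamma$ of $[L\mid m'+p+1]_{\epsilon_{k+1}}$ by inserting $\Phi$ into the bipartition gap of $\Gamma'$: shift both parts of $\Phi$ up by $\tfrac{\epsilon_k+1}{2}$ and shift the upper part of $\Gamma'$ up by $\epsilon_\Phi+1=2M+6$, keeping the lower part of $\Gamma'$ fixed. An elementary check shows the four intervals reassemble into the two consecutive intervals $\II\!\left(0,\tfrac{\epsilon_{k+1}-1}{2}\right)$ and $\II\!\left(\tfrac{\epsilon_{k+1}+1}{2},\epsilon_{k+1}\right)$, so $\Gamma$ is again in standard form, ready to be fed back into the induction. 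The differences of $\Phi$ (unchanged, since both its parts move together) fill $\II(1,2M+5)$, those of $\Gamma'$ (upper part raised by $2M+6$) fill $\II(2M+7,\epsilon_{k+1})$, and joining the free $\Gamma'$-end $y_k$ (now in the upper part) to the $(\tfrac{M}{2}+1)$-end of $\Phi$ (in the lower part) by a single edge supplies the unique missing difference $2M+6$; hence $\Delta\Gamma=\pm\II(1,\epsilon_{k+1})$ and this connecting edge respects the bipartition. The two remaining free ends are $x_k=\tfrac{M}{2}+1$ and the image $\tfrac{3M}{2}+4+\tfrac{\epsilon_k+1}{2}=\tfrac{\epsilon_{k+1}+M+3}{2}$ of the $(\tfrac{3M}{2}+4)$-end of $\Phi$, which are exactly the required $x$ and $y$.

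The main thing to get right is the amalgamation bookkeeping, namely choosing the shifts so that three things hold simultaneously: the four partite intervals reassemble into two consecutive intervals (keeping $\Gamma$ normalized for the next induction step), the preserved and the raised difference sets leave a single gap filled exactly by the connecting edge, and that connecting edge joins a lower vertex to an upper one while leaving the two prescribed path-ends free. Each of these reduces to the routine arithmetic sketched above; the only genuine structural inputs are Theorem~\ref{evencycle+path}, used both for the base case and for every inserted single-cycle piece, and the observation that fixing $\ell+p=2M+5$ decouples the piece from $\ell$. I expect no real obstacle beyond verifying these alignments, the delicate part being to confirm that the invariant $x=\tfrac{M}{2}+1$ is genuinely preserved at every step (it is, because $x_k$ always lies in the fixed lower part $\II(0,\tfrac{\epsilon_k-1}{2})$, which one checks contains $\tfrac{M}{2}+1$ for all $k\geq 1$).
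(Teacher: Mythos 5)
Your proposal is correct and is essentially the paper's own construction: both rest on Theorem \ref{evencycle+path} applied to pieces $[\ell_i\mid 2\max(L)+5-\ell_i]$ of constant size $2\max(L)+5$, nested so that the lower partite intervals stack consecutively and the upper ones stack in reverse, with the connecting path-edges supplying exactly the missing differences (multiples of $2(\max(L)+3)$). The only difference is presentational — the paper places all the translated pieces $\Gamma_i+(iw,(2t-i)w)$ simultaneously and joins them in one step, whereas you build the same final labeling one cycle at a time by inserting each new piece into the bipartition gap — and your bookkeeping (shifts, reassembled intervals, the single missing difference $2M+6$, and the preserved path-ends) all checks out.
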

\begin{proof} Let $L=\{\ell_0, \ell_1, \ldots, \ell_t\}$ and set $k=2\max(L)+5\equiv1\pmod{4}$.
By Theorem \ref{evencycle+path}, the graph $[\ell_i\ \mid k-\ell_i]$ has 
an $\AL\left(\frac{k-1}{4}, 3\left(\frac{k-1}{4}\right) + 1\right)$, say~$\Gamma_i$,
for every $i\in\II(0,t)$. 
Recalling that the parts of  $\Gamma_i$ (as a bipartite graph) are $J_{i,1}=\II(0,w-1)$ and 
$J_{i,2}=\II(w, 2w-1)$ where $w=\frac{k+1}{2}$, 
we have that $\Gamma'_i = \Gamma_i + (iw, (2t-i)w)$ is an $\AL(J'_{i,1}, J'_{i, 2}, x'_{i}, y'_i)$
of  $[\ell_i\ \mid k-\ell_i]$ where
\begin{enumerate}
  \item $J'_{i, 1} = J_{i,1} + iw = \II\big(iw, (i+1)w-1\big)$,
  \item $J'_{i, 2} = J_{i,2} + (2t-i)w = \II\big((2t-i+1)w,  (2t-i+2)w -1\big)$,
  \item $x'_i = \frac{k-1}{4} + iw            \in J'_{i, 1}$,
  \item $y'_i = 3\left( \frac{k-1}{4}\right) + 1 + (2t-i)w \in J'_{i, 2}$;
  \item $\Delta \Gamma'_i = \pm \II(2(t-i)w +1, 2(t-i)w +2w -1)$,
\end{enumerate}
for every $i\in\II(0,t)$. Finally, let $\mathcal{E} =\big\{\{y'_{i}, x'_{i+1}\}\mid i\in \II(0,t-1)\big\}$.

\begin{minipage}{15cm}
\hspace{-2cm}
\begin{tikzpicture}[x=.8cm,y=1cm,scale=0.75]

\foreach[evaluate={10-\val} as \x] \val in {10, ..., 6} {
\coordinate (\val) at (\x+0.5,0);
}

\foreach \j in {0, 6, 14, 20} 
{  
  \INTEGERQUOTIENT{\j}{6}{\i};
  \coordinate  (B1\i) at (0+\j,-.2); 
  \coordinate  (B2\i) at (5+\j,-.2);
  \coordinate  (B3\i) at (5+\j,2.2);
  \coordinate  (B4\i) at (0+\j,2.2);        
\draw[fill=orange!10!white] (B1\i) -- (B2\i) -- (B3\i) -- (B4\i) -- (B1\i); 

  \coordinate  (SV1\i) at (0.3+\j,2.05); 
  \coordinate  (SV2\i) at (4.7+\j,2.05);
  \coordinate  (SV3\i) at (4.7+\j,1.75);
  \coordinate  (SV4\i) at (0.3+\j,1.75);      
\draw[fill=orange!10!white] (SV1\i) -- (SV2\i) -- (SV3\i) -- (SV4\i) -- (SV1\i); 
\draw[->] (.5+\j, 1.9) -- (.5+\j, 2.5);

  \coordinate  (LV1\i) at (0.3+\j,.25); 
  \coordinate  (LV2\i) at (4.7+\j,.25);
  \coordinate  (LV3\i) at (4.7+\j,-.05);
  \coordinate  (LV4\i) at (0.3+\j,-.05);      
\draw[fill=orange!10!white] (LV1\i) -- (LV2\i) -- (LV3\i) -- (LV4\i) -- (LV1\i); 
\draw[->] (.5+\j, 0.1) -- (.5+\j, -.5);

  \coordinate  (b\i) at (2.5+\j, 0.1);           
  \coordinate  (a\i) at (2.5+\j, 1.9) {};
  \draw [in=30, out=-150, looseness=3.25] (a\i) to (b\i);
  \draw[black,thick,dotted] (2.5+\j,1) ellipse (1.7cm and .5cm);   
  \draw (a\i) circle (2pt)[fill];		
  \draw (b\i) circle (2pt)[fill];	 
 
} 
  \draw (a0) node[above=5pt] {$\small x'_0$};		
  \draw (a1) node[above=5pt] {$\small x'_{1}$};	
  \draw (a2) node[above=5pt] {$\small x'_{t-1}$};	
  \draw (a3) node[above=5pt] {$\small x'_{t}$};	     
  \draw (b0) node[below=5pt] {$\small y'_0$};	  
  \draw (b1) node[below=5pt] {$\small y'_{1}$};	  
  \draw (b2) node[below=5pt] {$\small y'_{t-1}$};	  
  \draw (b3) node[below=5pt] {$\small y'_t$};	        

\draw[red, in=170, out=-10] (b0) to (a1);  
\draw[red, in=230, out=-10, dashed] (b1) to (12,1.5);
\draw[red, in=50, out=170, dashed] (a2) to (13,.5);  
\draw[red, in=170, out=-10] (b2) to (a3);

  \node[red, rotate=45] (e1) at (5.5,1.3) {\tiny $2tw$};
  \node[red, rotate=45] (e2) at (11.5,1.3) {\tiny $\hspace{3mm}(2t-2)w$};
  \node[red, rotate=40] (e3) at (13.5,1.3) {\tiny $4w$};  
  \node[red, rotate=45] (e3) at (19.5,1.3) {\tiny $2w$};   
  
  \node (SInt0) at (.5, 2.8) {\small $J'_{0,1}$};  
  \node (SInt1) at (.5+6, 2.8) {\small $J'_{1,1}$};  
  \node (SInt2) at (.5+14, 2.8) {\small $J'_{t-1,1}$};  
  \node (SInt3) at (.5+20, 2.8) {\small $J'_{t,1}$};    
  
  \node (LInt0) at (.5, -.8) {\small $J'_{0,2}$};  
  \node (LInt1) at (.5+6, -.8) {\small $J'_{1,2}$};  
  \node (LInt2) at (.5+14, -.8) {\small $J'_{t-1,2}$};  
  \node (LInt3) at (.5+20, -.8) {\small $J'_{t,2}$};        
  
  \node (G0) at (3.5,1) {\small $\Gamma'_{0}$};  
  \node (G1) at (3.5+6,1) {\small $\Gamma'_{1}$};  
  \node (G2) at (3.5+14, 1) {\small $\Gamma'_{t-1}$};  
  \node (G3) at (3.5+20, 1) {\small $\Gamma'_{t}$};     
 
\end{tikzpicture}
\end{minipage}

We claim that $H = \bigcup_{i=0}^t \Gamma'_i \ \cup \ \mathcal{E} $ is the desired $\alpha$-labeling.
Indeed, $H$ is a bipartite graph whose parts are 
$J'_1 = \bigcup_{i=0}^t J'_{i, 1}$ and $J'_2 = \bigcup_{i=0}^t J'_{i, 2}$. 
Also, it is isomorphic to the graph
$\left[L \;\Big|\; \epsilon - \sum_{i=0}^{t} \ell_i\right]$ of size
$\epsilon = 2(t+1)w -1$,
since the edges in $\mathcal{E} $ join
the paths of the $\Gamma'_i$s to form a single path of the desired length. 
Furthermore, the ends of the path in $H$ are 
$x'_0= \frac{k-1}{4} = \frac{\max(L)}{2}+1$ and 
$y'_t=3\left( \frac{k-1}{4}\right) +tw + 1 = \frac{\epsilon+\max(L)+3}{2}$.
Finally, since 
\[y'_{i}- x'_{i+1} = \textstyle{\frac{k-1}{2}} + 1 + (2t-2i-1)w = 2(t-i)w,\] 
it follows that
$\Delta H = \bigcup_{i=0}^t \Delta \Gamma'_i \ \cup \ \Delta \mathcal{E}  = \pm\II(1, \epsilon)$,
and this completes the proof.
\end{proof}

Theorem \ref{thm:main1:1} and Lemma \ref{lem:extension1} result in the following.
\begin{thm}\label{thm:main1:2}
  Let $L$ be a nonempty list of even integers greater than~$2$.    
  If $\epsilon \geq 2(|L|+1)(\max(L)+3)-1$, 
  then there is an $\AL$ of 
  $\left[L \;\Big|\; \epsilon - \sum_{\ell\in L} \ell\right]$, say $\Gamma$.
  Furthermore,
  $\DeltaP \Gamma \supseteq \II\big(2|L|(\max(L)+3),\; \epsilon\big)$.
\end{thm}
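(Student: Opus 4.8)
The plan is to take the $\alpha$-labeling produced by Theorem \ref{thm:main1:1} and stretch its path to the required length using the path-extension Lemma \ref{lem:extension1}. First I would invoke Theorem \ref{thm:main1:1} to obtain an $\AL(x,y)$ of $\left[L\mid \epsilon_0 - \sum_{\ell\in L}\ell\right]$, say $\Gamma_0$, of size $\epsilon_0 = 2|L|(\max(L)+3)-1$, with end-vertices
\[
x = \frac{\max(L)}{2}+1 \quad\text{and}\quad y = \frac{\epsilon_0+\max(L)+3}{2}.
\]
A quick computation gives $y - x = (\epsilon_0+1)/2 > 0$, so the hypothesis $x<y$ of Lemma \ref{lem:extension1} is met.

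Next I would set $\mu = \epsilon - \epsilon_0$ and apply Lemma \ref{lem:extension1} to $\Gamma_0$ to lengthen its path by $\mu$ edges. The resulting labeling is an $\alpha$-labeling of $\left[L\mid (\epsilon_0-\sum_{\ell\in L}\ell) + \mu\right] = \left[L\mid \epsilon - \sum_{\ell\in L}\ell\right]$, of total size $\epsilon_0+\mu = \epsilon$, which is exactly the target graph. The difference statement then comes for free: Lemma \ref{lem:extension1} guarantees $\DeltaP\Gamma \supseteq \II(\epsilon_0+1, \epsilon_0+\mu)$, and since $\epsilon_0+1 = 2|L|(\max(L)+3)$ and $\epsilon_0+\mu = \epsilon$, this is precisely $\II\big(2|L|(\max(L)+3),\, \epsilon\big)$.

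The only point demanding care—and the reason for the particular threshold on $\epsilon$—is checking the numerical hypotheses of Lemma \ref{lem:extension1}, namely $\mu \geq 2x+1$ together with $\mu \neq 4x+1$. Here $2x+1 = \max(L)+3$ and $4x+1 = 2\max(L)+5$, while the assumption $\epsilon \geq 2(|L|+1)(\max(L)+3)-1$ rearranges directly into $\mu \geq 2(\max(L)+3) = 2\max(L)+6$. Thus $\mu$ comfortably exceeds $2x+1$ and, crucially, lies strictly above $4x+1$, so the single forbidden value of the extension lemma is avoided automatically. This is the whole subtlety: the bound $2(|L|+1)(\max(L)+3)-1$ is chosen precisely so that $\mu \geq 2\max(L)+6 > 2\max(L)+5 = 4x+1$, letting the exception be sidestepped without any case analysis. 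Beyond this verification there is no genuine obstacle, as the statement is a direct corollary of the two cited results.
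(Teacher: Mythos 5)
Your proposal is correct and follows essentially the same route as the paper: invoke Theorem \ref{thm:main1:1} with $\epsilon_0 = 2|L|(\max(L)+3)-1$ and then extend the path via Lemma \ref{lem:extension1}, noting that $\mu = \epsilon - \epsilon_0 \geq 2(\max(L)+3) = 4x+2$ clears both the threshold $2x+1$ and the exceptional value $4x+1$. The verification of $x<y$ and the identification of $\II(\epsilon_0+1,\epsilon_0+\mu)$ with $\II\big(2|L|(\max(L)+3),\epsilon\big)$ match the paper's argument exactly.
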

\begin{proof}
Let $\epsilon = \epsilon_0 + \mu$, where $\epsilon_0 = 2|L|(\max(L)+3)-1$
and $\mu\geq 2(\max(L) + 3)$.

By Theorem  \ref{thm:main1:1}, there exists 
an $\AL\left(x_0, y_0\right)$
of $\left[L \;\Big|\; \epsilon_0 -\sum_{\ell\in L} \ell \right]$, 
where $x_0 = \frac{\max(L)}{2}+1$. 
Considering that $\mu\geq 4x_0+2$, 
Lemma \ref{lem:extension1} guarantees that 
$\left[L \;\Big|\; \epsilon_0 + \mu -\sum_{\ell\in L} \ell \right]$ 
has an $\AL$, say $\Gamma$, 
such that 
$\DeltaP \Gamma \supseteq \II(\epsilon_0+1, \epsilon_0+\mu)$. 
The assertion follows.
\end{proof}

The following corollary provides a lower bound on $m$, 
as a function of  $|L|$ and $\max(L)$, 
which guarantees the existence of an $\alpha$-labeling of $[L\mid m]$.
\begin{cor}\label{cor:main1}
  Let $L$ be a list even integers greater than~$2$. 
  Then there exists an $\alpha$-labeling of 
$\left[L \mid m\right]$ for every  $m\geq 2(|L|+1)(\max(L) + 1) +3$.
\end{cor}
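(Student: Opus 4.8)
The plan is to derive this directly from Theorem \ref{thm:main1:2} by a change of variable from the path length $m$ to the size $\epsilon$ of the whole graph. Recall that $[L\mid m]$ has size $\epsilon = m + \sum_{\ell\in L}\ell$, so that $m = \epsilon - \sum_{\ell\in L}\ell$; finding an $\alpha$-labeling of $[L\mid m]$ is therefore the same as finding one of $\left[L\mid \epsilon - \sum_{\ell\in L}\ell\right]$. Theorem \ref{thm:main1:2} supplies exactly such a labeling as soon as $\epsilon \geq 2(|L|+1)(\max(L)+3)-1$, so it suffices to check that the hypothesis $m \geq 2(|L|+1)(\max(L)+1)+3$ forces this lower bound on $\epsilon$.

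The key step is the arithmetic. Since every element of $L$ is an even integer greater than $2$, each $\ell\in L$ satisfies $\ell\geq 4$, whence $\sum_{\ell\in L}\ell \geq 4|L|$. Expanding the target bound gives
\[
2(|L|+1)(\max(L)+3)-1 = 2(|L|+1)(\max(L)+1) + 4|L| + 3.
\]
Adding the inequality $\sum_{\ell\in L}\ell \geq 4|L|$ to the assumed bound $m \geq 2(|L|+1)(\max(L)+1)+3$ then yields
\[
\epsilon = m + \sum_{\ell\in L}\ell \geq 2(|L|+1)(\max(L)+1) + 4|L| + 3 = 2(|L|+1)(\max(L)+3)-1,
\]
which is precisely the hypothesis of Theorem \ref{thm:main1:2}. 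Applying that theorem produces the desired $\alpha$-labeling of $[L\mid m]$, and the corollary follows.

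There is essentially no obstacle here beyond the bookkeeping: the corollary is a clean repackaging of Theorem \ref{thm:main1:2}, trading the internal size parameter $\epsilon$ for the more natural path length $m$, and the only inequality that matters is $\ell\geq 4$, which holds because the cycle lengths are even and exceed $2$. Note also that the estimate is tight in the worst case where every cycle has length $4$, so $\sum_{\ell\in L}\ell = 4|L|$. The one point to watch is the degenerate case $L=\emptyset$, in which $[L\mid m]$ is a single path and $\max(L)=0$; here the bound reads $m\geq 5$ and the existence of an $\alpha$-labeling follows instead from Lemma \ref{paths}, so one should either restrict to nonempty $L$ as in Theorem \ref{thm:main1:2} or dispatch this case directly.
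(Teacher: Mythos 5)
Your proposal is correct and is exactly the paper's argument: the paper's proof of Corollary \ref{cor:main1} is the one-line observation that $\sum_{\ell\in L}\ell \geq 4|L|$ reduces the bound on $m$ to the hypothesis $\epsilon \geq 2(|L|+1)(\max(L)+3)-1$ of Theorem \ref{thm:main1:2}, which is precisely the arithmetic you carry out. Your remark about the degenerate case $L=\emptyset$ is a fair observation the paper leaves implicit, since Theorem \ref{thm:main1:2} assumes $L$ nonempty.
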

\begin{proof}
  The result easily  follows from Theorem \ref{thm:main1:2} 
  once we notice that $\sum_{\ell\in L} \ell \geq 4|L|$.
\end{proof}


\section{Constructing graceful labelings of $[L\mid m]$}
\label{sec:beta}
In this section we provide lower bounds on $m$ which guarantee that
$[L\mid m]$ has a graceful labelings. We start by considering the cases
where $[L]$ contains only even or odd cycles, and deal with the general case at the end.

\subsection{The case with only even cycles}
As mentioned in Section \ref{subsec:alpha}, a classic $\alpha$-labeling is a 
graceful labeling. Therefore for every sufficiently large $m$,
the existence of a graceful
labeling of the graph $[L\mid m]$ free from $2$-cycles
is shown in Theorem \ref{thm:main1:2}.

Here we deal with the case where $[L\mid m]$ has exactly one $2$-cycle.
\begin{thm}\label{thm:main2even:1}
Let $L$ be a list of even integers greater than~$2$. 
Then there is an 
$\AL\left(x, y\right)$
of $\left[L, 2 \;\Big|\; \epsilon -\sum_{\ell\in L} - 2 \right]$, 
where\\
\[
\epsilon = (2|L|+1)(\max(L)+3), \;\;\;
x = \frac{\max(L)}{2}+1,\;\; \;
y = (|L|+1)(\max(L)+3)-1.
\]
\end{thm}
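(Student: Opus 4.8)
The plan is to follow the proof of Theorem~\ref{thm:main1:1} and then append one extra block that carries the $2$-cycle. Writing $L=\{\ell_0,\ell_1,\ldots,\ell_t\}$ (so $|L|=t+1$), I would set $k=2\max(L)+5$ and $w=\frac{k+1}{2}=\max(L)+3$; since $\max(L)$ is even, $k\equiv 1\pmod 4$. Exactly as in Theorem~\ref{thm:main1:1}, Theorem~\ref{evencycle+path} supplies, for each $i\in\II(0,t)$, an $\AL\!\left(\frac{k-1}{4},\,3\frac{k-1}{4}+1\right)$ of $[\ell_i\mid k-\ell_i]$ with parts the consecutive intervals $\II(0,w-1)$ and $\II(w,2w-1)$; note that $\frac{k-1}{4}=\frac{\max(L)}{2}+1=x$. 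Translating these and joining their paths by a matching would build cycle-blocks $\Gamma'_0,\ldots,\Gamma'_t$ realizing the cycles of $L$, arranged (as in the figure of Theorem~\ref{thm:main1:1}) so that they contribute the difference intervals $\pm\II((2j+1)w+2,(2j+3)w)$ of length $2w-1$ for $0\le j\le t$, separated by the single ``gap'' differences $(2j+1)w+1$ produced by the matching edges for $1\le j\le t$.

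Since the target graph has a $2$-cycle, the $\AL$ in the statement must be read as the $\alpha$-labeling analogue permitting a single $2$-cycle: a bipartite labeling with interval parts whose difference list is the one prescribed by Definition~\ref{def:GL}, namely $\pm\{1,{}^2 3\}\cup\pm\II(4,\epsilon)$. The genuinely new ingredient is therefore a block $\Phi$ realizing that $2$-cycle. By Remark~\ref{rem:GL1} the $2$-cycle must appear as $(a,a+3)$, so that $\pm 3$ occurs twice and $\pm 2$ is absent. A count of vertices and edges forces the extra block to have $w+1$ vertices and $w$ edges, hence to be a labeling of $[2\mid w-2]$. I would construct $\Phi$ with parts two intervals of size $\frac{w+1}{2}$, the double edge on a pair of vertices differing by $3$ and straddling the two parts, and the $(w-2)$-path carrying the differences $\pm\{1\}\cup\pm\II(4,w)$, so that $\Phi$ supplies exactly $\pm\{1,{}^2 3\}\cup\pm\II(4,w)$. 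Such a $\Phi$ exists in general (for $w=7$, for instance, one may take the $2$-cycle $(2,5)$ and the path $3,4,0,7,1,6$); the general construction can be carried out along the lines of Lemma~\ref{paths}.

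Placing $\Phi$ at the small-difference end and attaching its path to the chain $\Gamma'_0,\ldots,\Gamma'_t$ by one further matching edge (which supplies the remaining gap difference $w+1$) would yield a labeling $\Gamma$ of $\left[L,2\mid \epsilon-\sum_{\ell\in L}\ell-2\right]$ of size $\epsilon=(2t+3)w=(2|L|+1)(\max(L)+3)$. By construction the two parts are the consecutive intervals partitioning $\II(0,\epsilon)$, and the difference contributions fit together as $[1,w]$ (from $\Phi$, with $2$ missing and $3$ doubled), then the intervals $\II((2j+1)w+2,(2j+3)w)$ separated by the matching gaps, giving $\pm\{1,{}^2 3\}\cup\pm\II(4,\epsilon)$ overall, as required by Definition~\ref{def:GL}. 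A direct computation of the two ends of the resulting path returns $x=\frac{\max(L)}{2}+1$ and $y=(|L|+1)(\max(L)+3)-1$.

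The main obstacle is the construction and placement of $\Phi$: one must realize the double edge as $(a,a+3)$ — which both doubles the difference $3$ and removes the difference $2$ — while keeping the two parts intervals and forcing the $(w-2)$-path to produce precisely $\pm\{1\}\cup\pm\II(4,w)$ with no clash against the differences coming from the $\Gamma'_i$ and from the matching. Once $\Phi$ and the translations are fixed, the difference bookkeeping and the check that the parts are intervals are routine, being entirely parallel to those in Theorem~\ref{thm:main1:1}.
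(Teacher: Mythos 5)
Your overall architecture coincides with the paper's: both proofs combine the cycle blocks of Theorem \ref{thm:main1:1}, contributing the differences $\pm\II(w+2,\epsilon)$ with $w=\max(L)+3$, with a block on $w+1$ vertices that realizes the $2$-cycle together with a $(w-2)$-path and supplies $\pm\{1,\;^23\}\ \cup\ \pm\II(4,w)$, the two pieces being joined by a single edge of difference $w+1$. The genuine gap is exactly the step you flag as ``the main obstacle'': the existence of the block $\Phi$ for every relevant $w$ is asserted on the strength of one example ($w=7$) and the suggestion that it ``can be carried out along the lines of Lemma \ref{paths}''. That lemma only produces paths whose difference list is a \emph{full} interval $\pm\II(w_2-z_1,\,z_2-w_1)$; it cannot directly deliver a path realizing the punctured set $\pm\{1\}\ \cup\ \pm\II(4,w)$, nor does it address how to fit in the double edge $(a,a+3)$ while keeping the parts intervals. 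Since this block is the only genuinely new ingredient of the theorem beyond Theorem \ref{thm:main1:1}, the proof as written is incomplete at its crucial point. A secondary under-specification is the placement of $\Phi$: ``at the small-difference end'' fixes which differences it contributes but not where its vertices sit, and both the claim that the joining edge has difference exactly $w+1$ and the claimed values of $x$ and $y$ depend on that choice.

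The paper closes the gap with tools you already had available. It starts from the explicit $\GL(1,2)$ of $[2\mid 1]$ given by $(0,3)\ \cup\ \{1,2\}$ and applies the path-extension Lemma \ref{lem:extension2} with $\mu=\max(L)$: this yields a $\GL(a/2-1,\,a-2)$ of $[2\mid a-3]$ on the interval $\II(0,a-1)$, where $a=\max(L)+4=w+1$, whose difference list is exactly $\pm\{1,\;^23\}\ \cup\ \pm\II(4,w)$ and whose path ends are known. The placement issue is settled by not re-deriving the cycle blocks: the finished $\alpha$-labeling $\Gamma_0$ of Theorem \ref{thm:main1:1} is replaced by $\Gamma_0+(0,a)$, which opens a gap $\II(b,a+b-1)$ between its two parts (here $b=|L|(\max(L)+3)$), and the translated $2$-cycle block is inserted into that gap; the joining edge then automatically has difference $a=w+1$, and the ends come out as $x=\max(L)/2+1$ and $y=(|L|+1)(\max(L)+3)-1$. (Note that the paper's own proof only establishes a $\GL$, so the $\AL$ in the statement is to be read loosely, as you observed.) If you replace your ad hoc $\Phi$ by this extension argument, your proof becomes complete and essentially identical to the paper's.
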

\begin{proof} If $L$ is empty, the desired $\GL$ of $[2\mid 1]$ is $\Gamma = C\ \cup\ P$ with
the $2$-cycle $C= (0,3)$ and the $1$-path $P= 2,3$. 

We now assume that $L$ is nonempty 
and let $\epsilon_0 = 2|L|(\max(L) + 3) - 1$, $a = \max(L)+4$ and $b = \frac{\epsilon_0+1}{2}$. 
  By Theorem \ref{thm:main1:1}, there exists
  an $\AL\left(x_0, y_0\right)$ of $\left[L \;\Big|\; \epsilon_0 -\sum_{\ell\in L} \ell \right]$,
  say $\Gamma_0$,  where $x_0  = a/2-1$, and $y_0  = x_0 +b$.
  Recalling that the partite sets of $\Gamma_0$ are 
  $J_{01}=\II\left(0, b-1\right)$ and 
  $J_{02}=\II\left(b, \epsilon_0\right)$,
  the graph $\Gamma'_0 = \Gamma_0 + (0, a)$
  is an $\AL$ of 
  $\left[L \;\Big|\; \epsilon_0 -\sum_{\ell\in L} \ell \right]$  
  with parameter set $\left(J'_{01}, J'_{02}, x'_0, y'_0\right)$, where
  
  \[ 
    J'_{01} = J_{01} = \II\left(0, b-1\right),\;\; 
    J'_{02} = J_{02}+a = \II\left(a+b,  a + \epsilon_0\right),
  \]
  \[
    x'_0 = x_0 = a/2-1, \;\;\text{and}\;\; y'_0 = y_0+a =  3a/2+b-1.
  \]
  
  Now consider the graceful labeling $(0,3)\ \cup\ \{1,2\}$
  of $[2 \mid 1]$. By applying to this labeling 
  Lemma \ref{lem:extension2} (with $\mu = a-4$),
  we obtain the existence of a $\GL\left(x_1, y_1\right)$ of $[2\mid a-3]$, 
  say $\Gamma_1$, where $x_1 = a/2-1$ and $y_1=a-2$.
  Recalling that $V(\Gamma_1)=\II(0, a-1)$,
  by remark \ref{rem:GL2} the graph $\Gamma'_1 = \Gamma_1 + b$
  is a $\GL\left(J_1', x'_1, y'_1\right)$ of $[2\mid a-3]$, where
  \[
    J_1' = \II(b, a+b-1),\;\; x'_1 = x_1 + b = a/2 + b -1, \;\; y'_1 = y_1 + b = a + b -2.
  \]
  
  We are going to show that $\Gamma = \Gamma'_0\ \cup\ \Gamma'_1\ \cup\ \{x'_1, y'_0\}$
  is the desired graceful labeling. Since $\Gamma'_0$ and $\Gamma'_1$ are vertex disjoint, 
  and the edge $\{x'_1, y'_0\}$ joins the respective paths to form one 
  of length $m = a + \epsilon_0 -\sum_{\ell\in L} \ell - 2$, it follows that
  $\Gamma$ is isomorphic to $[L, 2\mid m]$. 
  Also, $V(\Gamma) = J'_{01}\ \cup\ J_1'\ \cup\ J'_{02} = \II\left(0, a + \epsilon_0\right)$.
  Furthermore, 
  \begin{align*}
  \Delta\Gamma &= \Delta\Gamma'_0 \ \cup\ \Delta\Gamma'_1 \cup\ \pm\{y'_0 - x'_1\} \\
  &= \pm\II(a+1, a+\epsilon_0) \ \cup\  \pm\II(1, a-1) \ \cup\ \{\pm a\} 
   = \II(1,  a+\epsilon_0).
  \end{align*}
  Therefore, $\Gamma$ is a $\GL(x, y)$ of 
  $\left[L, 2 \mid \epsilon -\sum_{\ell\in L} \ell - 2\right]$, where
  \[\epsilon = a + \epsilon_0 = (2|L|+1)(\max(L) + 3),\]
  \[
  x=x'_0 = \frac{\max(L)}{2}+1 ,\;\; y=y'_1=(|L|+1)(\max(L)+3)-1,
  \]
  and the assertion is proven.
\end{proof}

Similarly to the proof of Theorem \ref{thm:main1:2}, one can easily check that
Theorem \ref{thm:main2even:1} and Lemma \ref{lem:extension1} result in the following.
\begin{thm}\label{thm:main2even:2}
  Let $L$ be a list of even integers greater than~$2$.    
  If $\epsilon \geq (2|L|+3)(\max(L)+3)$, 
  then there is an $\GL$ of 
  $\left[L, 2 \;\Big|\; \epsilon - \sum_{\ell\in L} \ell -2\right]$, say $\Gamma$.
  Furthermore,
  $\DeltaP \Gamma \supseteq \II\big((2|L|+1)(\max(L)+3) + 1,\; \epsilon\big)$.
\end{thm}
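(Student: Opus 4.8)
The plan is to mirror the proof of Theorem \ref{thm:main1:2}, taking the explicit graceful labeling produced by Theorem \ref{thm:main2even:1} as a seed and then stretching its path with Lemma \ref{lem:extension1}. Concretely, I would write $\epsilon = \epsilon_0 + \mu$ with $\epsilon_0 = (2|L|+1)(\max(L)+3)$ and $\mu = \epsilon - \epsilon_0$, and first record the arithmetic identity $(2|L|+3)(\max(L)+3) = \epsilon_0 + 2(\max(L)+3)$, so that the hypothesis $\epsilon \geq (2|L|+3)(\max(L)+3)$ is exactly the condition $\mu \geq 2(\max(L)+3)$.

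Next I would invoke Theorem \ref{thm:main2even:1} to obtain a $\GL(x_0, y_0)$, say $\Gamma_0$, of $\left[L, 2 \mid \epsilon_0 - \sum_{\ell\in L}\ell - 2\right]_{\epsilon_0}$ with $x_0 = \frac{\max(L)}{2}+1$ and $y_0 = (|L|+1)(\max(L)+3)-1$; one checks immediately that $x_0 < y_0$ (including the degenerate case $L = \emptyset$, where $(x_0,y_0)=(1,2)$), so the hypotheses of Lemma \ref{lem:extension1} are met. The key point is then the bookkeeping that the lower bound on $\mu$ clears both conditions of that lemma: since $2x_0 + 1 = \max(L)+3$ and $4x_0+1 = 2\max(L)+5$, the bound $\mu \geq 2(\max(L)+3) = 2\max(L)+6$ gives both $\mu > 2x_0+1$ and, in particular, $\mu \neq 4x_0+1$, so the single forbidden value is automatically avoided. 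Applying Lemma \ref{lem:extension1} to $\Gamma_0$ with this $\mu$ then yields a $\GL$ of $\left[L, 2 \mid \epsilon - \sum_{\ell\in L}\ell - 2\right]$ whose path satisfies $\DeltaP\Gamma \supseteq \II(\epsilon_0+1, \epsilon_0+\mu) = \II\big((2|L|+1)(\max(L)+3)+1,\ \epsilon\big)$, which is precisely the claimed conclusion.

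The only genuinely non-routine point to verify is that Lemma \ref{lem:extension1} may legitimately be applied when the seed graph carries a $2$-cycle. This is fine: Definition \ref{def:GL} permits a single $2$-cycle, and the construction underlying Lemma \ref{lem:extension1} alters only the path, appending to $\Delta\Gamma_0$ the block of large differences $\pm\II(\epsilon_0+1, \epsilon_0+\mu)$, all of which exceed $3$. Hence the distinguished structure $\pm\{1,\;^23\}$ arising from the $2$-cycle is left untouched, and the extended labeling is again a graceful labeling of a graph with exactly one $2$-cycle. I would therefore expect no substantive obstacle: the entire argument is a transcription of the proof of Theorem \ref{thm:main1:2}, the only extra care being the confirmation that the $2$-cycle passes transparently through the path-extension step.
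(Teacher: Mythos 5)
Your proposal is correct and follows exactly the route the paper intends: the paper's own justification is precisely ``apply Lemma \ref{lem:extension1} to the labeling of Theorem \ref{thm:main2even:1}, as in the proof of Theorem \ref{thm:main1:2}.'' Your extra checks --- that $\mu \geq 2(\max(L)+3)$ clears both $\mu \geq 2x_0+1$ and $\mu \neq 4x_0+1$, and that the $2$-cycle's differences $\pm\{1,\;^23\}$ pass untouched through the path-extension --- are exactly the details the paper leaves to the reader.
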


\subsection{The case with only odd cycles and at most one $2$-cycle}

The following result shows that we can always extend a graceful labeling of 
$[L\mid m]_\epsilon$ by adding a cycle no longer than $\epsilon+1$, 
and suitably increasing the length of the path.
This will be the base ingredient of a recursive construction that will allow us to prove 
the existence of a graceful 
labeling of $[L\mid m]$, containing only odd cycles, as long as $m$ is sufficiently large.

\begin{lem}\label{lem:odd cycles}
 Assume there is a $\GL(x, y)$ of  $[L\mid m]_{\epsilon}$, with  $x<y$, let
 $t$ be a positive integer such that $0 \leq x \leq \epsilon-2t$ 
 and set $m'=6\epsilon  + 2(x - y) + 6$. Then 
 there exists a $\GL(x', y')$ of 
 $[L, 2t + 1 \mid  m + m']$, say $\Gamma'$, where
 \begin{enumerate}
   \item\label{lem:odd cycles:2} $x' \leq \frac{m'}{2}$,
   \item\label{lem:odd cycles:3} $y' - x' = 2\epsilon + t + 2$,
   \item\label{lem:odd cycles:4} $\Delta_p\Gamma'\supseteq 
  \II(\epsilon + 2t+1, \epsilon + 2t+1 + m')\setminus\{y' - x'\}$.
 \end{enumerate}   
\end{lem}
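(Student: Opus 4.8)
The plan is to realise $\Gamma'$ on the vertex set $\II(0,\epsilon')$, where $\epsilon'=\epsilon+(2t+1)+m'$ is the size of $[L,2t+1\mid m+m']$, by a difference-accounting construction: keep a transformed copy of $\Gamma$ to cover the low differences $\pm\II(1,\epsilon)$, build a fresh $(2t+1)$-cycle to cover the block $\pm\big(\II(\epsilon+1,\epsilon+2t)\cup\{2\epsilon+t+2\}\big)$, and lengthen the path to cover the remaining high differences $\pm\big(\II(\epsilon+2t+1,\epsilon')\setminus\{2\epsilon+t+2\}\big)$. A direct count shows these three blocks partition $\II(1,\epsilon')$ with sizes $\epsilon$, $2t+1$ and $m'$, matching the three groups of edges (the $L$-cycles plus the original $m$-path, the new cycle, and the $m'$ new path-edges); note that $2\epsilon+t+2$ lies in $\II(\epsilon+2t+1,\epsilon')$ because $\epsilon\ge 2t$ (from $0\le x\le\epsilon-2t$) and $m'\ge 4\epsilon+6$ (from $y-x\le\epsilon$), so excising it from the path range and assigning it to the cycle is consistent. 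If $[L\mid m]$ happens to contain a $2$-cycle, the only change is confined to the low range $\{1,2,3\}$ as prescribed by Definition \ref{def:GL}, and it is inherited unchanged from $\Gamma$, leaving the cycle- and path-blocks untouched.

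For the odd cycle I would realise the prescribed differences as a closed walk: take a path of $2t$ edges with absolute differences $\epsilon+1,\dots,\epsilon+2t$ from a vertex $p$ to $q=p+(2\epsilon+t+2)$, then close it with the long edge $\{q,p\}$ of difference $2\epsilon+t+2$. Writing the $2t$ small steps as $\pm(\epsilon+i)$, the net displacement condition forces exactly $t+1$ of them to be positive, and cancelling the $\epsilon$-free part pins the positive index-set $P\subseteq\{1,\dots,2t\}$ to satisfy $|P|=t+1$ and $\sum_{i\in P}i=t^2+t+1$; the estimate $(t+1)(t+2)/2\le t^2+t+1\le 3t(t+1)/2$, valid for every $t\ge1$, guarantees such a $P$ exists. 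A suitable cyclic ordering of these steps (alternating large increases and decreases to keep the partial sums separated) yields a genuine $(2t+1)$-cycle on distinct vertices lying in an interval of width about $2\epsilon$, which is placed disjointly from the copy of $\Gamma$; the hypothesis $x\le\epsilon-2t$ is exactly what keeps $2t$ small relative to $\epsilon$ so that this block fits.

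For the path I would first reflect and translate $\Gamma$ (as in the proof of Lemma \ref{lem:extension2}) so that its $m$-path has the correct orientation and end $x$, then lengthen it using Lemmas \ref{lem:extension1} and \ref{lem:extension2} together with a zigzag supplied by Lemma \ref{paths} to generate precisely the consecutive high differences $\II(\epsilon+2t+1,\epsilon')$, omitting the single value $2\epsilon+t+2$ reserved for the cycle. Splicing the cycle in and joining the path pieces by single bridging edges produces one $(m+m')$-path whose ends $x',y'$ are then read off; I would arrange the endpoint data so that $y'-x'=2\epsilon+t+2$, giving \eqref{lem:odd cycles:3}, verify $x'\le m'/2$, giving \eqref{lem:odd cycles:2}, and read off that $\Delta_p\Gamma'\supseteq\II(\epsilon+2t+1,\epsilon')\setminus\{y'-x'\}$, giving \eqref{lem:odd cycles:4}. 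Splitting $m'$ into the several extension lengths leaves enough freedom to dodge the forbidden value $\mu=4x+1$ in Lemma \ref{lem:extension1}.

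The main obstacle is the odd cycle together with the global packing. The difference multiset and the signs for the cycle are forced by the accounting, so the real work is twofold: choosing the cyclic order of the $2t+1$ steps so that the cycle vertices are pairwise distinct, and choosing the base points of the cycle and of the path extension so that, after the final translation into $\II(0,\epsilon')$, no two labels among $\Gamma$, the cycle, and the extended path coincide and nothing leaves the interval. The inequalities $0\le x\le\epsilon-2t$ and $x<y$, together with the exact value $m'=6\epsilon+2(x-y)+6$, are precisely the slack that makes this simultaneous packing succeed while delivering the endpoint conditions \eqref{lem:odd cycles:2}--\eqref{lem:odd cycles:4}.
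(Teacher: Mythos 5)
Your difference accounting is exactly the paper's: keep $\Gamma$ for $\pm\II(1,\epsilon)$, spend $\pm\bigl(\II(\epsilon+1,\epsilon+2t)\cup\{2\epsilon+t+2\}\bigr)$ on the new odd cycle, and let the path absorb the remaining high differences. But the two constructive steps you defer are where the lemma actually lives, and one of them, as you describe it, would not go through. Lemmas \ref{lem:extension1}, \ref{lem:extension2} and \ref{paths} only produce path pieces whose differences form \emph{full consecutive intervals}; none of them can omit a single interior value such as $2\epsilon+t+2$, and ``arranging the endpoint data so that $y'-x'=2\epsilon+t+2$'' is not something you can impose independently --- those ends are dictated by whichever pieces you splice. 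The mechanism your plan is missing (and which the paper uses) is to build, via Lemma \ref{paths}, an auxiliary $\alpha$-labelled path $P_2$ of length $2\epsilon-2t+1$ whose difference interval $\pm\II(\epsilon+2t+2,\,3\epsilon+2)$ \emph{contains} $2\epsilon+t+2$, then delete the unique edge $\{u,v\}$ realizing that difference; the two resulting subpaths are joined (together with a second long path $P_1$ and the old path of $\Gamma$) by three bridging edges whose differences $\pm\{\epsilon+2t+1,\,3\epsilon+3,\,3\epsilon+4\}$ exactly plug the gaps, and the freed vertices $u,v$ become the ends $x',y'$ of the whole path. That is what makes conclusions (2) and (3) hold simultaneously: the omitted path difference \emph{is} $y'-x'$ by construction. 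Without the delete-an-edge step your path either realizes $2\epsilon+t+2$ (clashing with the cycle) or fails to be a single path.

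The odd cycle is likewise left at ``a suitable cyclic ordering exists''. Your sign count ($t+1$ positive steps with $\sum_{i\in P}i=t^2+t+1$) is correct arithmetic, but distinctness of the $2t+1$ vertices and disjointness from $\Gamma$ and from the path pieces still have to be exhibited; the paper does this with the explicit zigzag $C=(-1,\epsilon+1,\dots,-t,\epsilon+t,-\epsilon-2)$, whose vertex set $\II(-t,-1)\cup\II(\epsilon+1,\epsilon+t)\cup\{-\epsilon-2\}$ is visibly outside $\II(0,\epsilon)$ and outside the intervals reserved for $P_1$ and $P_2$. Since the global vertex packing is, as you correctly flag, the main obstacle, it can only be verified once every piece is pinned down explicitly; an existence argument for the cycle leaves that verification out of reach.
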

\begin{proof}
 Let $\Gamma$ be a $\GL(x, y)$ of  $[L\mid m]_{\epsilon}$, where $0\leq x<y$ and 
 $V(\Gamma)=\II(0, \epsilon)$, and let $P$ denote the $m$-path in $\Gamma$.
 Also, recall that  
 \[  
  \Delta \Gamma = 
  \begin{cases} 
    \pm \II(1, \epsilon) & \text{if $[L \mid m]$ has no $2$-cycle},\\
    \pm \{1, \;^23\}\ \cup\ \pm\II(4, \epsilon) & \text{if $[L \mid m]$ has exactly one $2$-cycle}.  
  \end{cases}
 \]
 
 Now consider the cycle 
 $C=(-1, \epsilon+1, \ldots, -i, \epsilon+i, \ldots, -t, \epsilon+t, -\epsilon-2)$ 
 of length $2t+1$, 
 where $t>0$ and $\epsilon-2t \geq x$. Note that 
 \begin{align*}
      V(C) &= \{-\epsilon - 2\}\ \cup \ \II(-t, -1) \ \cup \ \II(\epsilon+1, \epsilon+t),\\
  \Delta C &= \pm \II(\epsilon+1, \epsilon +2t)   \ \cup \ \{\pm(2\epsilon+t+2)\}.
 \end{align*}
 
 Lemma \ref{paths} guarantees the existence of an $\AL(I_h, J_h, x_h, y_h)$ of $[\;-\mid m_h]$, say $P_h$ for $h\in \{1,2\}$, whose parameters are the following
 \[
   \begin{array}{|c||c|c|c|}
     \hline \rule{0pt}{1\normalbaselineskip}  
       h  &       1                                           &       2                       
         \\[0.5ex] \hline \hline \rule{0pt}{1\normalbaselineskip}
      x_h &  y -3\epsilon -4                                & x   +t -\epsilon -1      
         \\[0.5ex] \hline \rule{0pt}{1\normalbaselineskip}
      y_h & x +t + 2\epsilon +2                             & x  +2t +\epsilon +1           
         \\[0.5ex] \hline \rule{0pt}{1\normalbaselineskip}         
      I_h  &   \II(-(x -y +t +3\epsilon +4), -\epsilon -3)   & \II(-\epsilon -1, -t-1)
         \\[0.5ex]  \hline \rule{0pt}{1\normalbaselineskip}
      J_h &    \II(2\epsilon +2, x - y +t +4\epsilon +3) & \II(\epsilon + t +1, 2\epsilon +1)  
         \\[0.5ex]  \hline \rule{0pt}{1\normalbaselineskip}
      m_h &   2(x - y +t)+4\epsilon+3 &  2\epsilon - 2t + 1      
      \\ [0.5ex] \hline 
   \end{array}
 \]  
Note that $P_2$ is well defined, namely, $x_2\in I_2$ and $y_2\in J_2$, 
since by assumption $0\leq x \leq \epsilon -2t$. 
Also,
\[ \Delta P_h = 
\begin{cases}
  \pm \II(3\epsilon + 5, 2(x - y + t) + 7\epsilon + 7) & \text{if $h = 1$}, \\    
  \pm \II(2t +\epsilon +2, 3\epsilon+2)                & \text{if $h = 2$}.
\end{cases} 
\]
Now let 
$P_{2,1} = x_2, \ldots, u $ and
$P_{2,2} = v, \ldots, y_2 $ be
the paths that we obtain from $P_2$ by removing the edge $\{u,v\}$ that gives the differences 
$\pm(2\epsilon+t+2)$ already covered by $C$. 
Clearly,  $V(P_{2,1}\ \cup\ P_{2,2}) = V(P_2)$ and $\Delta (P_{2,1}\ \cup\ P_{2,2}) = \Delta P_2\setminus\{\pm(2\epsilon+t+2)\}$.

The graphs $\Gamma, C, P_1, P_{2,1},$ and $P_{2,2}$ are vertex disjoint, and denoting by 
$\Gamma'$ their union, we have that
\begin{align*}
  V(\Gamma')     &= \II(-x +y -t -3\epsilon -4, x - y +t +4\epsilon +3)\\
  \Delta \Gamma' &= \Delta\Gamma\ \cup\ \pm \II(\epsilon+1, 2(x - y + t) + 7\epsilon + 7)\setminus\Omega,
\end{align*}
where $\Omega = \pm \{\epsilon +2t +1,\; 3\epsilon+3,\; 3\epsilon+4\}$.

We obtain the desired graceful labeling from $\Gamma'$, by adding the edges in 
$\mathcal{E}=\big\{ \{y_2, x\},\; \{y, x_1\},\; \{y_1, x_2\}\big\}$, and 
then considering a suitable translate. 
In fact,
the matching $\mathcal{E}$
joins $P, P_1, P_{2,1}, P_{2,2}$ to form a single path of length $m + m'$, 
with $m' = 6\epsilon + 2(x - y) +6$,
whose end vertices are $u$ and $v$. Since
$\Delta \mathcal{E} = \Omega$, we have that $\Gamma' \cup\,  \mathcal{E}$ 
is a $\GL(J', u, v)$ of $[L, 2t+1 \mid m +m']$, with $J' = V(\Gamma')$.

Finally, let $a=x -y +t +3\epsilon +4$. Recalling that $P_2$ is a bipartite graph whose parts are $I_2$ and $J_2$, we can assume (without loss of generality) that $u\in I_2$ and $v\in J_2$.
One can check that $(\Gamma' \cup\,  \mathcal{E})+a$ is a $\GL(x', y')$ of $[L, 2t+1 \mid m +m']$, 
with $(x', y') = (u+a, v+a)$, which satisfies conditions
\ref{lem:odd cycles:2}--\ref{lem:odd cycles:4}.
\end{proof}

\begin{ex} In the picture below we consider the $\GL(x,y)$ $\Gamma$ of $[3 \mid 4]$ and following  
the proof of Lemma \ref{lem:odd cycles} we construct 
a $\GL(u,v)$ of $[3, 7 \mid 48]$ by adding the 7-cycle $C$ and connecting the path in $\Gamma$
with the paths $P_1, P_{2,1}$ and $P_{2,2}$ through the red edges which form the set $\mathcal{E}$ in the proof.
Its  translate by 26 provide the final graceful labeling satisfying the assertion.
\begin{center}
\includegraphics[width=\textwidth]{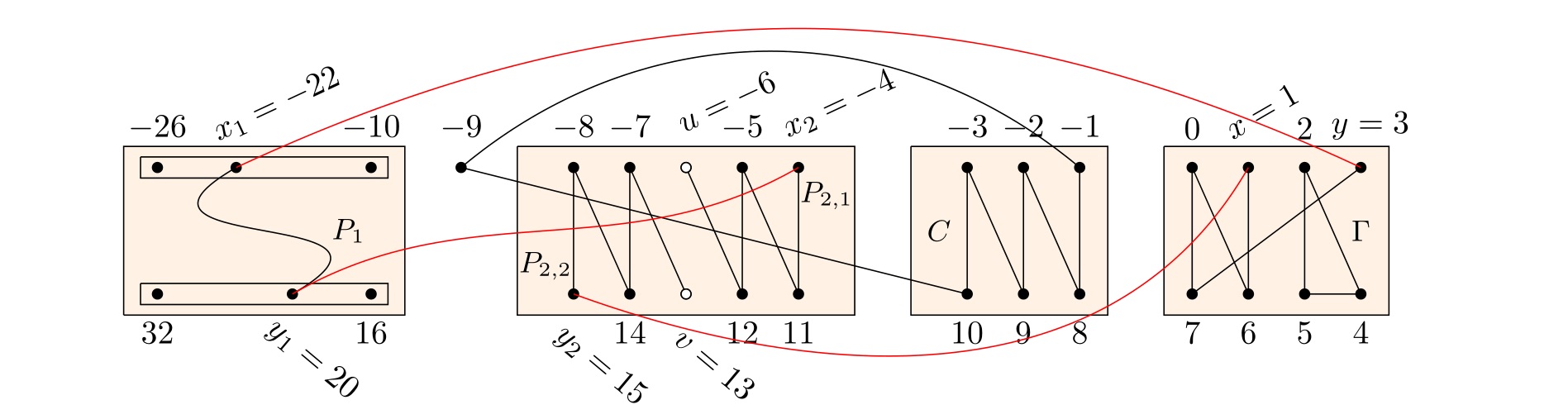}
\end{center}
\end{ex}

\begin{lem}\label{lem:base case:1}
  For every odd $\ell\geq 3$, there exists $\epsilon\in\II(\ell, 2\ell +1)$ such that 
  $[\ell\mid \epsilon-\ell]$ has a $\GL(x,y)$ which satisfies the following conditions:
  \begin{enumerate}
    \item $0\leq x\leq \min\left(\frac{\epsilon-1}{2}, \epsilon -\ell\right)$, and
    \item if $\ell\geq 7$, then $y - x \geq (\ell-1)/2+4$.   
  \end{enumerate} 
\end{lem}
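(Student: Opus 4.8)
The plan is to produce, for each odd $\ell\ge 3$, an explicit graceful labeling of the disjoint union of a single $\ell$-cycle and a path, choosing the overall size $\epsilon\in\II(\ell,2\ell+1)$ so that the two endpoint conditions can be met. Since this lemma is the base case feeding the recursion of Lemma \ref{lem:odd cycles}, the whole point is to gain precise control over the two path-endpoints $x<y$; mere existence (which already follows from the result of \cite{Traetta 13} that $[\ell\mid m]$ is graceful whenever $m\ge 1$ and $\ell+m\ge 6$) is not enough.

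First I would exploit the reflection symmetry: if $\Gamma$ is a $\GL(x,y)$ of $[\ell\mid m]_\epsilon$, then $-\Gamma+\epsilon$ is a $\GL(\epsilon-y,\epsilon-x)$ of the same graph (the map already used in Lemma \ref{lem:extension2}), and the quantity $y-x$ is invariant under it. Consequently condition~(2), which only constrains $y-x$, is reflection-invariant, whereas condition~(1), which asks that the \emph{smaller} endpoint be at most $\min\!\big(\tfrac{\epsilon-1}{2},\epsilon-\ell\big)$, can be arranged by replacing $\Gamma$ by its reflection whenever that lowers the smaller endpoint. So it suffices to build, for each $\ell$, one graceful labeling with $y-x\ge(\ell-1)/2+4$ and $\min(x,\epsilon-y)\le\min(\tfrac{\epsilon-1}2,\epsilon-\ell)$.

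For the construction itself I would argue by the residue of $\ell$ modulo $4$, disposing of the small cases $\ell\in\{3,5\}$ (where condition~(2) is vacuous) by hand: for instance $[\,3\mid 3\,]$ with cycle $(2,3,5)$ and path $1,6,0,4$ gives $\epsilon=6$, $x=1$, $y=4$, and a similar $7$-vertex labeling settles $\ell=5$. For $\ell\ge 7$ I would start from a short explicit \emph{core} consisting of the $\ell$-cycle together with a few of the smallest differences, and then splice in zig-zag path segments obtained from Lemma \ref{paths}, each realizing a consecutive block of the large differences, joining them into a single path of the required length. Taking $\epsilon$ near the top of $\II(\ell,2\ell+1)$ makes the bounds in condition~(1) generous, while a long path forces $y-x$ large, securing condition~(2). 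One then verifies the three defining properties of Definition \ref{def:GL}: that the vertices exhaust $\II(0,\epsilon)$, that $\Delta\Gamma=\pm\II(1,\epsilon)$, and that the endpoints satisfy (1)–(2).

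The main obstacle is the tension inside the endpoint requirements combined with the arithmetic of odd cycles. Condition~(1) pushes the smaller endpoint down, condition~(2) pushes the gap $y-x$ up, and both must hold while the cycle and path jointly cover \emph{every} difference exactly once. The genuinely delicate point is the odd-cycle core: unlike even cycles (which carry $\alpha$-labelings and hence realize any contiguous block of differences), an odd $\ell$-cycle realizes a prescribed interval of differences only under a parity constraint, which forces the core to be split according to $\ell\bmod 4$ and a handful of small $\ell$ to be treated as genuine exceptions. Passing a single, uniformly checkable family through these parity obstructions, rather than an unenlightening case-by-case verification, is where most of the effort will go.
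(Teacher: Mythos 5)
Your overall architecture (explicit small cases, the reflection $\Gamma\mapsto-\Gamma+\epsilon$ to normalize which endpoint is small, then a path extension to make condition (1) generous) is sound, and your $\ell=3$ example is exactly the one in the paper. But there is a genuine gap at the heart of the proposal: for general odd $\ell$ you never actually produce the ``core'' graceful labeling of the odd cycle with controlled path endpoints; you only announce that it will be built by cases on $\ell\bmod 4$ and concede that ``this is where most of the effort will go.'' That core is the entire content of the lemma for large $\ell$ --- the reflection symmetry, the splicing of segments from Lemma \ref{paths}, and the arithmetic making condition (1) easy when $\epsilon$ is near $2\ell+1$ are all routine once the core exists. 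As written, the proposal defers precisely the step it identifies as delicate, so it does not yet constitute a proof.

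The missing ingredient is in fact already available, and it is what the paper uses: \cite{Traetta 13} does not merely give existence of a graceful labeling of $[\ell\mid 1]$ (the statement you quote and dismiss as ``mere existence''), it gives a $\GL(x',y')$ with the explicit endpoints $x'=\left\lfloor\frac{\ell+1}{4}\right\rfloor$ and $y'=\left\lfloor\frac{3\ell+3}{4}\right\rfloor$. With that in hand the paper's proof for $\ell\geq 11$ is two lines: apply Lemma \ref{lem:extension1} with $\mu=2x'+2$ (even, so the exceptional value $4x'+1$ is avoided) to obtain a $\GL(x',x'+y'+1)$ of $[\ell\mid 2x'+3]$ of size $\epsilon=\ell+2x'+3\leq 2\ell+1$, and check $y-x=y'+1\geq(\ell-1)/2+4$. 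The remaining cases $\ell\in\{3,5,7,9\}$ are handled by four explicit labelings (note the paper treats $7$ and $9$ by hand as well, not only $3$ and $5$, since condition (2) is not vacuous there). To complete your argument you should either quote the endpoint-specified form of the result from \cite{Traetta 13}, or actually supply the mod-$4$ odd-cycle constructions you promise; without one of these the proof is incomplete.
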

\begin{proof} Let $\ell\geq 3$ be an odd integer. If $\ell\in\{3,5,7,9\}$, 
the desired graceful labeling $\Gamma = C\ \cup\ P$ of $[\ell \mid \epsilon-\ell]$ is given below.
\begin{align*} 
  (\ell,\epsilon) = (3,6): &\; C=(2,3,5),\;\;      P=1,6,0,4;\\
  (\ell,\epsilon) = (5,6): &\; C=(0,6,2,3,5),\;\;      P=1,4;\\
  (\ell,\epsilon) = (7,14): &\; C=(3,11,4,10,7,6,8),\;\;P=2,12,1,13,0,14,5,9;\\
  (\ell,\epsilon)= (9,17): &\; C=(0,16,1,15,2,9,8,6,12),\;\; P=3,14,4,13,5,10,7,11.
\end{align*}
Now assume that $\ell\geq 11$. In \cite{Traetta 13} it is shown that $[\ell\mid 1]$ has a 
$\GL(x', y')$ with $x'=\left\lfloor \frac{\ell+1}{4}\right\rfloor$ and
 $y'=\left\lfloor \frac{3\ell+3}{4}\right\rfloor$.
By applying Lemma \ref{lem:extension1} (with $\mu=2x'+2$), we obtain the existence of 
a $\GL(x, y)$ of $[\ell \mid 2x'+3]_\epsilon$, with $(x,y)=(x', x'+y'+1)$, which satisfies the assertion.
\end{proof}

\begin{lem}\label{lem:base case:2}
  For every odd $\ell\geq 3$, there exists $\epsilon\in\II(\ell+2, 7(2\ell +1))$ such that 
  $[2,\ell\mid \epsilon-\ell-2]$ has a $\GL(x,y)$ which satisfies the following conditions:
  \begin{enumerate}
    \item $0\leq x\leq \min\left(\frac{\epsilon-1}{2}, \epsilon -\ell\right)$, and
    \item $y - x \geq (\ell-1)/2+4$.   
  \end{enumerate} 
\end{lem}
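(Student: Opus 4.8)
The plan is to obtain the desired labeling by \emph{adding the odd $\ell$-cycle to a graceful labeling of the graph $[2\mid m_0]$} consisting of a single $2$-cycle and a path, which is exactly what Lemma~\ref{lem:odd cycles} is designed to do. This mirrors the strategy of Lemma~\ref{lem:base case:1}, where one starts from a single cycle; here the single $2$-cycle plays the role of the ``at most one $2$-cycle'' permitted in a graceful labeling, and it is preserved by Lemma~\ref{lem:odd cycles} (whose proof already treats the one-$2$-cycle difference pattern $\pm\{1,{}^23\}\cup\pm\II(4,\epsilon)$). So $[2,\ell\mid m]$ will arise as the output $[L,2t+1\mid \cdot\,]$ with $L=\{2\}$ and $2t+1=\ell$.

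First I would build the base labeling of $[2\mid m_0]$. Start from the explicit $\GL(1,2)$ of $[2\mid 1]$ given by the $2$-cycle $(0,3)$ together with the $1$-path $1,2$ (of size $3$). Applying Lemma~\ref{lem:extension1} with an \emph{even} increment $\mu$ preserves the small endpoint $x=1$ and produces a $\GL(1,\,2+\mu/2)$ of $[2\mid 1+\mu]$ of size $3+\mu$. Choosing $\mu=\ell-3$ (even since $\ell$ is odd, and satisfying $\mu\ge 3$, $\mu\ne 5$ precisely when $\ell\ge 7$) yields a $\GL(1,(\ell+1)/2)$ of $[2\mid \ell-2]$ of size $\epsilon_0=\ell$.

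Next I would apply Lemma~\ref{lem:odd cycles} with $t=(\ell-1)/2$ to this base labeling. Its precondition $0\le x\le \epsilon_0-2t$ reads $1\le \ell-(\ell-1)=1$, so it holds with equality; this tightness is exactly why I insist on $x=1$ and $\epsilon_0=\ell$. The lemma then outputs a $\GL(x',y')$ of $[2,\ell\mid m_0+m']$ with $m'=6\epsilon_0+2(x-y)+6=5\ell+7$, hence of size $\epsilon'=\epsilon_0+\ell+m'=7(\ell+1)$. Reading off the guaranteed output data $y'-x'=2\epsilon_0+t+2$ and $x'\le m'/2$, one checks directly that $y'-x'=2\ell+(\ell-1)/2+2\ge (\ell-1)/2+4$, that $x'\le (5\ell+7)/2\le \min\big((\epsilon'-1)/2,\,\epsilon'-\ell\big)$, and that $\epsilon'=7(\ell+1)$ lies in $\II(\ell+2,\,7(2\ell+1))$, so conditions~(1)--(2) of the statement are met for every odd $\ell\ge 7$.

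Finally I would dispose of the two small values $\ell\in\{3,5\}$, which fall below the extension threshold $\mu\ge 3$. For $\ell=3$ one applies Lemma~\ref{lem:odd cycles} directly to $[2\mid1]$ itself ($\epsilon_0=3$, $t=1$, again $1\le 3-2=1$), giving $\epsilon'=28\in\II(5,49)$; for $\ell=5$ one first extends $[2\mid1]$ by $\mu=4$ to a $\GL(1,4)$ of $[2\mid5]_{7}$ and then applies Lemma~\ref{lem:odd cycles} with $t=2$, giving $\epsilon'=54\in\II(7,77)$. Both outputs satisfy (1)--(2) by the same elementary inequalities. The only real delicacy is that the precondition $x\le\epsilon_0-2t$ is an equality, so the construction has \emph{no slack} in the choice of $x$ and $\epsilon_0$, and it is precisely this rigidity, colliding with the parity/threshold constraints of the extension lemma, that forces the separate treatment of $\ell\in\{3,5\}$; by contrast the size bound $\epsilon'=7(\ell+1)$ sits comfortably below $7(2\ell+1)$, so the upper range constraint is never tight.
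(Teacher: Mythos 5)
Your proof is correct, but it takes a genuinely different route from the paper's. The paper proves this lemma by direct construction: it exhibits explicit graceful labelings of $[2,\ell\mid\epsilon-\ell-2]$ for the seven values $\ell\in\{3,5,7,9,11,13,19\}$ and, for $\ell\geq 15$ with $\ell\neq 19$, assembles a labeling from a $2$-cycle $C_0$, an $\ell$-cycle $C_1$ and three paths glued together via Lemma~\ref{paths}, obtaining the comparatively small size $\epsilon=(5\ell+5)/2$. You instead bootstrap everything from the seed labeling $(0,3)\cup\{1,2\}$ of $[2\mid 1]$, stretch its path with Lemma~\ref{lem:extension1}, and then attach the $\ell$-cycle with Lemma~\ref{lem:odd cycles}; this is legitimate because both lemmas are stated and proved for labelings containing one $2$-cycle (the difference pattern $\pm\{1,\;^23\}\cup\pm\II(4,\epsilon)$ is carried through in both proofs, and the paper itself feeds such labelings into Lemma~\ref{lem:odd cycles} in the inductive step of Theorem~\ref{thm:main2odd:1}). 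Your arithmetic checks out: for $\ell\geq7$ the seed $\GL(1,(\ell+1)/2)$ of $[2\mid\ell-2]_{\ell}$ meets the boundary condition $x=1=\epsilon_0-2t$ with $t=(\ell-1)/2$, and the output has $\epsilon'=7(\ell+1)\in\II(\ell+2,14\ell+7)$, $y'-x'=2\ell+t+2\geq t+4$, and $x'\leq(5\ell+7)/2\leq\min\big((\epsilon'-1)/2,\epsilon'-\ell\big)$; the cases $\ell\in\{3,5\}$ work as you describe. What you lose relative to the paper is only the size of $\epsilon$ ($7\ell+7$ versus roughly $5\ell/2$), which is immaterial since only the upper bound $7(2\ell+1)$ is used downstream in the base case of Theorem~\ref{thm:main2odd:1}; what you gain is uniformity --- two exceptional values handled by the same machinery instead of seven hand-built labelings that must each be verified edge by edge.
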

\begin{proof} 
Let $\ell\geq 3$ be an odd integer. If $\ell\in\{3,5,7,9, 11, 13, 19\}$, 
the desired graceful labeling $\Gamma = C_0 \ \cup\ C_1 \ \cup\ P$ 
of $[2, \ell \mid \epsilon-\ell-2]$ is given below.
\begin{align*} 
  (\ell,\epsilon) = (3,12): &\; C_0=(5, 8),\;\; C_1=(2,6,7),\;\;      P=9,0,12,1,11,3,10,4;\\
  (\ell,\epsilon) = (5,14): &\; C_0=(6,9),\;\; C_1=(2,10,4,8,7),\;\;  
                                P=11,0,14,1,13,3,12,5;\\
  (\ell,\epsilon) = (7,16): &\; C_0=(7,10),\;\; C_1=(3,12,4,11,5,9,8),\\  
                            &\; P=13,2,14,1,15,0,16,6;\\
  (\ell,\epsilon) = (9,26): &\; C_0=(12,15),\;\; C_1=(7,19,8,18,9,17,10,16,11),\\  
                            &\; P=14,13,26,0,25,1,24,2,23,3,22,4,21,5,20,6;\\
  (\ell,\epsilon) = (11,22): &\; C_0=(10,13),\;\; C_1=(6,18,5,16,7,17,9,15,8,12,11),\\  
                             &\; P=14,0,22,1,21,2,20,3,19,4;\\  
  (\ell,\epsilon) = (13,32): &\; C_0=(15,18),\;\; C_1=(9,24,8,22,10,23,12,21,11,19,14,20,13),\\  
                             &\; P=17,16,32,0,31,1,30,2,29,3,28,4,27,5,26,6,25,7;\\                                                        
  (\ell,\epsilon) = (19,53): &\; C_0=(26,29),\\ 
              &\;   C_1=(13,40,14,39,15,38,16,37,17,36,18,35,19,34,20,33,21,\\
              &\;\;\;\;\;\;\;\;\;\;\;32,22),\\                                                           
              &\;     P=28,27,23,31,24,30,25,53,0,52,1,51,2,50,3,49,4,48,\\  
              &\;\;\;\;\;\;\;\;\;5,47,6,46,7,45,8,44,9,43,10,2,11,41,12.\\                                                        
\end{align*}

Now let $\ell$ be an odd integer with $\ell\geq 15$ and $\ell\neq 19$.
It will be useful to write 
$\ell = 2t+1$ where $t = 2\tau + \rho +1$ with 
$\tau\geq 3$, $\rho\in\{0,1\}$ and $(\tau,\rho)\neq (4,0)$.

Let $C_0, C_1$ and $P_0$ be the $2$-cycle, the $(2t+1)$-cycle and the $(2t+5)$-path, respectively, defined below:
\[ C_0 = (\tau-1, \tau+2),\]
\[C_1 = (-t-1, 2t-1, \ldots, -t-1+i,2t-1-i, \ldots, -2, t, -1),\]
\[P_0 = 2t, -t-2,\ldots,  2t+i, -t-i-2, \ldots, 3t+1, -2t-3, 3t+2, 1.\]
Note that
\begin{equation}\label{eq:vertices1}
  \begin{aligned}
        V(C_1) &= \II(-t-1, -1)\ \cup\ \II(t, 2t-1),\;\;\text{and}\\
        V(P_0) &= \II(-2t-3, -t-2)\ \cup\ \{1\} \ \cup\  \II(2t, 3t+2).      
  \end{aligned}
\end{equation}  
Also, 
\begin{equation}\label{eq:diff1}
    \Delta C_0 = \;^2\{\pm3\}, \;\;\Delta C_1 = \II(t, 3t), \;\;\text{and}\;\;
    \Delta P_0 = \II(3t+1, 5t+5).
\end{equation}

We now apply Lemma \ref{paths} with 
$(w_1, \gamma_1) = (0, \tau-2)$,
$(w_2, \gamma_2) = (\tau+3, \tau-3+\rho)$, and $i=1$. 
Note that $\gamma_1>\gamma_2$
if and only if $\rho=0$, in which case
$i\neq \frac{\gamma_2}{2}$ since $(\tau,\rho)\neq (4,0)$.
Therefore, Lemma \ref{paths} guarantees that the graph $[- \mid t-5]$
has an $\AL(J_{11}, J_{12}, x_1, y_1)$, say $P_1$, where 
\begin{equation}\label{eq:vertices2}
  \begin{aligned}
   J_{11} = \II(w_1, w_1+\gamma_1) = \II(0, \tau-2),\\
   J_{12} = \II(w_2, w_2+\gamma_2) = \II(\tau+3, t-1), \\
  x_1= w_1 + i = 1, \;\;\;
  y_1=
  \begin{cases}
      w_1+\gamma_1-i = \tau-3 & \text{if $\rho=0$},\\
      w_2 + i = \tau+4 & \text{if $\rho=1$}.   
  \end{cases}
  \end{aligned}
\end{equation}
We recall that by definition of $\alpha$-labeling we have that
\begin{equation}\label{eq:diff2}
  \Delta P_1 = \II(5, t-1).
\end{equation}
Finally, let $P_2=y_1, \tau+1-\rho, \tau+\rho$.

By \eqref{eq:vertices1} and \eqref{eq:vertices2}, it follows that
$Q=P_0 \ \cup\ P_1 \ \cup\ P_2$ is a $(3t+2)$-path 
whose end vertices are $\tau+\rho$ and $2t$, 
and the graph $\Gamma = C_0 \ \cup\ C_1 \ \cup\ Q$ is isomorphic to 
$[2, 2t+1 \mid 3t+2]$, with $V(\Gamma) = \II(-2t-3, 3t+2)$. 
Also, 
by \eqref{eq:diff1} and \eqref{eq:diff2} and considering that $\Delta Q = \pm\{1,4\}$, 
we have that $\Delta\Gamma = \II(1, 5t+5)$.
Therefore, $\Gamma' = \Gamma + (2t+3)$ is a $\GL(x, y)$ of
$[2, \ell\mid \epsilon-\ell-2]$, where $x=2t+3 + \tau+\rho$,  $y=4t+3$ and $\epsilon = 5t+5$. 
One can easily check that
$x\leq \epsilon - \ell$ and $y - x = 2t - \tau -\rho \geq t+4,$
therefore, $\Gamma'$ satisfies the assertion.
\end{proof}

\begin{thm}\label{thm:main2odd:1}
 Let $L$ be a list of odd integers greater than 1, 
 let $a\in\{0,1\}$, 
 and set $\epsilon_1=7^{|L|+a-1}\big(2\max(L) +1\big)$ when $L\neq \emptyset$, 
 otherwise $\epsilon_1=3$.
 Then there exists 
 $\epsilon\in \II\left(\sum_{\ell\in L} \ell +2a + 1, \epsilon_1\right)$
 such that
 $\left[L, \;^a2 \mid \epsilon - \sum_{\ell\in L}\ell - 2a\right]_\epsilon$ has a 
 $\GL(x,y)$ where  $0\leq x\leq \frac{\epsilon-1}{2}$ and $x<y$.
\end{thm}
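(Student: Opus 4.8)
The plan is to induct on the number $k=|L|$ of odd cycles, using Lemmas \ref{lem:base case:1} and \ref{lem:base case:2} as base cases and repeatedly appending odd cycles by means of Lemma \ref{lem:odd cycles}. The governing idea is to keep the largest cycle $\ell^\ast=\max(L)$ in the graph from the very first step, so that the gap $y-x$ delivered by the base case dominates the requirement for adding every subsequent (shorter) cycle. First I would dispose of the degenerate case $L=\emptyset$ directly: for $a=0$ a short path is graceful, and for $a=1$ the labeling $(0,3)\ \cup\ \{1,2\}$ of $[2\mid 1]$ works with $(x,y)=(1,2)$ and $\epsilon=3=\epsilon_1$.

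Assuming $L\neq\emptyset$, write $L=\{\ell^\ast=\ell_1,\ell_2,\ldots,\ell_k\}$ and let $\Gamma^{(1)}$ be a graceful labeling of $[\ell_1,{}^a2\mid\cdot]_{\epsilon^{(1)}}$: for $a=0$ apply Lemma \ref{lem:base case:1} to $\ell_1$, and for $a=1$ apply Lemma \ref{lem:base case:2}. In both cases $0\le x\le\min\!\big(\tfrac{\epsilon^{(1)}-1}{2},\,\epsilon^{(1)}-\ell_1\big)$, $x<y$, and $\epsilon^{(1)}\le 7^{a}(2\max(L)+1)$, which matches the target bound $7^{\,1+a-1}(2\max(L)+1)$ for a single cycle.

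Then I would append $\ell_2,\ldots,\ell_k$ one at a time. At the step adding $\ell_j=2t_j+1$, Lemma \ref{lem:odd cycles} turns a $\GL(x,y)$ of size $\epsilon$ into a $\GL(x',y')$ of size
\[
\epsilon' = \epsilon+(2t_j+1)+m' = 7\epsilon+2t_j+2(x-y)+7,
\qquad m'=6\epsilon+2(x-y)+6,
\]
with $y'-x'=2\epsilon+t_j+2$ and $x'\le m'/2$. Its hypotheses $0\le x\le\epsilon-2t_j$ and $x<y$ are maintained: the first follows from $x\le\epsilon^{(1)}-\ell_1$ at the base step and from $x\le m'/2$ thereafter (using $\ell_j\le\ell^\ast$ and the growth of $\epsilon$), while $x<y$ persists because $y'-x'=2\epsilon+t_j+2>0$. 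Moreover $x'\le m'/2\le(\epsilon'-1)/2$ since $\epsilon'-m'=\epsilon+2t_j+1\ge 1$, which in particular gives the required final bound $0\le x\le(\epsilon-1)/2$. The decisive point is that the correction $2t_j+2(x-y)+7$ is $\le 0$ exactly when $y-x\ge (\ell_j-1)/2+4$; in that regime $\epsilon'\le 7\epsilon$, so the size bound is multiplied by precisely the factor $7$ at each addition, yielding $\epsilon^{(k)}\le 7^{k+a-1}(2\max(L)+1)=\epsilon_1$. This gap condition holds by an invariant: the base case supplies $y-x\ge(\ell^\ast-1)/2+4$ whenever $\ell^\ast\ge 7$ (Lemma \ref{lem:base case:1}) or $a=1$ (Lemma \ref{lem:base case:2}), and after any addition $y-x=2\epsilon+t_j+2\ge 2\max(L)+2\ge(\ell-1)/2+4$ for every $\ell\le\max(L)$, so the invariant propagates.

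The one place where this uniform argument breaks down — and the main obstacle — is the very first addition when $a=0$ and $\ell^\ast\in\{3,5\}$, because Lemma \ref{lem:base case:1} only guarantees the large gap for $\ell\ge 7$. There the base labelings have $\epsilon^{(1)}=6$ and path-ends $(x,y)=(1,4)$, so $y-x=3$ is too small and the correction is positive. I would settle this single step by the explicit estimate
\[
\epsilon^{(2)} = 7\cdot 6 + 2t_2 + 2(1-4) + 7 = 43+2t_2 \le 47 \le 7(2\max(L)+1),
\]
valid for $\max(L)\in\{3,5\}$, and then note that $y-x$ jumps to $2\cdot 6+t_2+2\ge 15$, so all later additions revert to the clean case $\epsilon'\le 7\epsilon$. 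This pins down $\epsilon^{(k)}\le\epsilon_1$; the lower bound $\epsilon\ge\sum_{\ell\in L}\ell+2a+1$ is automatic because every labeling built has path length $\ge 1$. The final $\Gamma^{(k)}$ is then the desired $\GL(x,y)$ with $x<y$ and $0\le x\le(\epsilon-1)/2$ recorded along the way.
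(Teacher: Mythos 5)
Your proposal is correct and follows essentially the same route as the paper: order the cycles so the largest is present from the start, seed the induction with Lemmas \ref{lem:base case:1} and \ref{lem:base case:2}, and append the remaining cycles one at a time via Lemma \ref{lem:odd cycles}, using the invariant $y-x\geq t+4$ to keep each application multiplying the size by at most $7$. The one genuine divergence is the exceptional case $a=0$, $\max(L)\in\{3,5\}$: the paper restores the gap invariant by exhibiting three new explicit labelings of $[3,3\mid\cdot]_{13}$, $[5,3\mid\cdot]_{19}$ and $[5,5\mid\cdot]_{39}$ as a shifted base case $h=1$, whereas you apply Lemma \ref{lem:odd cycles} once to the small-gap base labeling (its hypotheses $0\leq x\leq\epsilon-2t$ do hold there, since $x=1$, $\epsilon=6$ and $t\leq 2$) and absorb the positive correction into the explicit estimate $\epsilon^{(2)}=43+2t_2\leq 47\leq 7(2\max(L)+1)$. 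Your variant trades three ad hoc constructions for a two-line numerical check and yields the same final bound, so both are valid; the paper's choice gives slightly smaller intermediate sizes but buys nothing for the statement as formulated.
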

\begin{proof} If $L$ is empty, then $\Gamma_0=\{0,1\}$  and $\Gamma_1=(0,3)\ \cup\ \{1,2\}$
are graceful labelings of $[\,-\mid 1]$ and $[2\mid 1]$, respectively, that satisfy the assertion with $\epsilon_1=3$.

Now let $L=\{\ell_0, \ell_1,\cdots, \ell_u\}$ be a nonempty list of odd integers, with
$\ell_0\geq \ell_1 \geq \cdots \geq \ell_u\geq 3$, 
set $L_h = \{\ell_0, \ell_1, \ldots, \ell_h\}$ and let $\ell_h = 2t_h + 1$ 
for $0\leq h\leq u$. 
Since the case $u=0$ is proven in Lemmas \ref{lem:base case:1} and \ref{lem:base case:2}, 
we can assume that $u>0$. 

We prove by induction that for every $h\in\II(0,u)$, 
with $h\geq 1$ when $a=0$ and $\ell_0\leq5$, there exists a $\GL(x_h, y_h)$
of $\left[L_h, \;^a2\;\big|\; \epsilon_h - \sum_{j=0}^h \ell_j - 2a\right]_{\epsilon_h}$ where
\begin{enumerate}
  \item \label{thm:main2odd:cond1} 
  $ 2a + 1+\sum_{j=0}^h \ell_j\leq \epsilon_h\leq  7^{h+a}\big(2\max(L)+1\big)$;
  \item \label{thm:main2odd:cond2}
  $x_h \leq \min\left(\frac{\epsilon_h-1}{2}, \epsilon_h -\ell_h\right)$;
  \item \label{thm:main2odd:cond3}
  $y_h - x_h \geq t_h +4$. 
\end{enumerate}
If $\ell_0\geq 7$ or $a=1$, the base case $h=0$ is proven in 
Lemmas \ref{lem:base case:1} and \ref{lem:base case:2}. 
If $\ell_0\in\{3,5\}$ and $a=0$, the base case is $h = 1$ and the desired graceful labeling of 
$[\ell_0, \ell_1\mid \epsilon_1 -\ell_0- \ell_1]$ is the graph
$\Gamma = C_0\ \cup\ C_1\ \cup\ P$, where the $\ell_i$-cycle $C_i$, for $i=0,1$, and the path $P$ are given below:
  \begin{align*}
    (\ell_0, \ell_1, \epsilon_1)= (3,3,13):\; &C_0=(5,6,8),\;   C_1=(2,7,11),\; P=4,10,3,13,0,12,1,9;\\
    (\ell_0, \ell_1, \epsilon_1)= (5,3,19):\; &C_0=(3,10,11),\; C_1=(7,13,8,12,9),\\
                                            &\;\,P=14,0,19,1,18,2,17,4,16,5,15,6;\\  
    (\ell_0, \ell_1, \epsilon_1)= (5,5,39):\;  &C_0=(14, 25, 15, 24, 27),\; C_1=(16, 23, 17, 22, 18),\\
    &\;\,P= 13, 21, 20, 8, 31, 9, 30, 10, 29, 11, 28, 12, 26, 2, 37, 3, \\
    &\;\;\;\;\;\;\;\;\;\, 36, 0, 39, 1, 38, 6, 33, 7, 32, 4, 35, 5, 34, 19.
  \end{align*}
Now supposing the assertion is true for some $i\in \II(0, u-1)$,  we prove it holds
for $i+1$. By the induction hypothesis, the graph $[L_i, \; ^a2 \mid m_i]$ 
has a $\GL(x_i, y_i)$, where $m_{i} = \epsilon_{i} - \sum_{j=0}^{i} \ell_j - 2a$, 
which satisfies conditions \ref{thm:main2odd:cond1}--\ref{thm:main2odd:cond3} with $h=i$.
Hence $\Gamma_i$ satisfies the assumptions of Lemma \ref{lem:odd cycles}, since 
$0 \leq x_i \leq \epsilon_i-\ell_i < \epsilon_i -2t_{i+1}$. Therefore, 
$[L_{i+1},  \; ^a2\mid  m_i + m_{i+1}]_{\epsilon_{i+1}}$, with $m_{i+1}= 6\epsilon_i + 2(x_i- y_i) +6$,
has a $\GL(x_{i+1}, y_{i+1})$, say $\Gamma_{i+1}$, such that
    \[
      x_{i+1} \leq m_{i+1}/2,
      \;\;\text{and}
      \;\; y_{i+1} - x_{i+1} = 2\epsilon_i + t_{i+1} + 2\geq  t_{i+1} + 4.
    \]
Since $\epsilon_{i+1} = \epsilon_i + \ell_{i+1} + m_{i+1}$, 
it follows that $x_{i+1} \leq \min(\epsilon_{i+1}/2-1,\epsilon_{i+1} -\ell_{i+1})$, 
hence $\Gamma_{i+1}$ satisfies conditions 
\ref{thm:main2odd:cond2} and \ref{thm:main2odd:cond3} with $h=i+1$.  
Finally, since $x_i- y_i\leq -t_i-4$, we have that 
\begin{align*}
   \epsilon_{i+1}
                  &= 7\epsilon_i + \ell_{i+1} + 2(x_i- y_i) +6
                     \leq 7\epsilon_i + \ell_{i+1} -\ell_i -1< 7\epsilon_i.
\end{align*}
Therefore,  $\Gamma_{i+1}$ satisfies condition \ref{thm:main2odd:cond1} with $h=i+1$, and this completes the proof.
\end{proof}

Theorem \ref{thm:main2odd:1} and Lemma \ref{lem:extension1} result in the following.

\begin{thm}\label{thm:main2odd:2}
 Let $L$ be a list of odd integers greater than 1, and let $a\in\{0,1\}$.
 Then there exists a graceful labeling $\Gamma_\epsilon$ of 
 $\left[L, \;^a2 \mid \epsilon - \sum_{\ell\in L}\ell - 2a\right]_\epsilon$
 for every $\epsilon \geq 3\epsilon_1$, where 
 $\epsilon_1 =  7^{|L|+a-1}\big(2\max(L) +1\big)$ if $L\neq \emptyset$, otherwise $\epsilon_1=3$. Furthermore, 
 $\DeltaP \Gamma_\epsilon \supseteq 
   \II
   \left( 
      \epsilon_1+1, \epsilon
   \right)$.
\end{thm}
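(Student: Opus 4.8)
The plan is to derive this directly from Theorem~\ref{thm:main2odd:1} by lengthening the path-component via Lemma~\ref{lem:extension1}. First I would invoke Theorem~\ref{thm:main2odd:1} to obtain a base graceful labeling $\Gamma$, namely a $\GL(x,y)$ of $\left[L, \;^a2 \mid \epsilon_0 - \sum_{\ell\in L}\ell - 2a\right]_{\epsilon_0}$ for some size $\epsilon_0$ with $\sum_{\ell\in L}\ell + 2a + 1 \leq \epsilon_0 \leq \epsilon_1$, where $0 \leq x \leq \frac{\epsilon_0-1}{2}$ and $x < y$. The target graph differs from this base graph only in the length of its path-component: writing $\mu = \epsilon - \epsilon_0 \geq 0$, the target path has length $\bigl(\epsilon_0 - \sum_{\ell\in L}\ell - 2a\bigr) + \mu$, so it suffices to stretch the path of $\Gamma$ by $\mu$ while preserving gracefulness, which is exactly the content of Lemma~\ref{lem:extension1}.

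The key verification is that the parameter $\mu$ meets the two hypotheses of Lemma~\ref{lem:extension1}, namely $\mu \geq 2x+1$ and $\mu \neq 4x+1$; here the factor $3$ in the bound $\epsilon \geq 3\epsilon_1$ is what does the work. Since $\epsilon_0 \leq \epsilon_1$, I get $\mu = \epsilon - \epsilon_0 \geq 3\epsilon_1 - \epsilon_1 = 2\epsilon_1 \geq 2\epsilon_0$. From $x \leq \frac{\epsilon_0-1}{2}$ one has $2x+1 \leq \epsilon_0 \leq 2\epsilon_0 \leq \mu$, and likewise $4x+1 \leq 2\epsilon_0 - 1 < 2\epsilon_0 \leq \mu$; in particular $\mu > 4x+1$, so the excepted extension length is avoided. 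The empty-list case $\epsilon_1 = 3$ behaves identically, the base labelings being $\{0,1\}$ of $[\,-\mid 1]$ and $(0,3)\ \cup\ \{1,2\}$ of $[2\mid 1]$ supplied by Theorem~\ref{thm:main2odd:1}.

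Having checked the hypotheses, I would apply Lemma~\ref{lem:extension1} to $\Gamma$ with this $\mu$, producing a graceful labeling $\Gamma_\epsilon$ of $\left[L, \;^a2 \mid m_0 + \mu\right]$ where $m_0 = \epsilon_0 - \sum_{\ell\in L}\ell - 2a$; since $m_0 + \mu = \epsilon - \sum_{\ell\in L}\ell - 2a$, this is the desired labeling of the target graph of size $\epsilon$. Finally, the ``Furthermore'' clause of Lemma~\ref{lem:extension1} gives $\DeltaP \Gamma_\epsilon \supseteq \II(\epsilon_0+1, \epsilon_0 + \mu) = \II(\epsilon_0+1, \epsilon)$, and because $\epsilon_0 \leq \epsilon_1$ we have $\II(\epsilon_1+1, \epsilon) \subseteq \II(\epsilon_0+1, \epsilon)$, which yields the stated inclusion $\DeltaP \Gamma_\epsilon \supseteq \II(\epsilon_1+1, \epsilon)$.

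The argument is essentially bookkeeping, so I do not expect a deep obstacle; the only point demanding genuine care is confirming that the forbidden extension length $\mu = 4x+1$ of Lemma~\ref{lem:extension1} never arises, and it is precisely to guarantee this (rather than merely clearing the $2x+1$ threshold) that one replaces $\epsilon_1$ by $3\epsilon_1$ in the hypothesis.
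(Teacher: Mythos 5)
Your proposal is correct and follows essentially the same route as the paper: invoke Theorem \ref{thm:main2odd:1} for a base labeling of size $\epsilon'\leq\epsilon_1$ with $x\leq\frac{\epsilon'-1}{2}$, then apply Lemma \ref{lem:extension1} with $\mu=\epsilon-\epsilon'\geq 2\epsilon_1\geq 2\epsilon'\geq 4x+2$, which clears both the threshold $2x+1$ and the excluded value $4x+1$. The difference inclusion then follows from $\II(\epsilon_1+1,\epsilon)\subseteq\II(\epsilon'+1,\epsilon)$ exactly as you argue.
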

\begin{proof}
By Theorem \ref{thm:main2odd:1},
there is a $\GL(x,y)$ of $\left[L, \;^a2 \mid \epsilon' - \sum_{\ell\in L}\ell - 2a\right]_{\epsilon'}$
for some $\epsilon'\leq \epsilon_1$, 
such that $0\leq x\leq \frac{\epsilon'-1}{2}$.

Let $\epsilon \geq 3\epsilon_1$, and set $\mu = \epsilon - \epsilon'$.
Since  
\[\mu \geq 3\epsilon_1 - \epsilon_1 = 2\epsilon_1 \geq 2 \epsilon' \geq 4x+2,\] 
by Lemma \ref{lem:extension1} there is 
a $\GL$ of $\left[L, \;^a2 \mid \epsilon + \sum_{\ell\in L}\ell - 2a\right]$, 
say $\Gamma$, such that $\DeltaP \Gamma \supseteq \II(\epsilon_1+1, \epsilon)$.
\end{proof}


\subsection{The general case}

Here, we apply Lemma \ref{lem:extension3} to Theorems \ref{thm:main1:1} and \ref{thm:main2odd:1}, 
and construct graceful labelings of $[L \mid m]$ and $[L, 2 \mid m]$ whenever
$L$ contains both odd cycles and even cycles of length at least $4$, as long as 
$m$ is large enough.

\begin{thm}\label{thm:main2general:1}
 For $i=0,1$, let $L_i$ be a list of integers congruent to $i \pmod{2}$ and greater than $2$, 
 and let $a\in \{0,1\}$.
 Then there exists a graceful labeling of 
 $\left[L_0,  L_1, \;^a2 \mid \epsilon - \sum_{\ell\in L_0\cup L_1} \ell -2a\right]$ 
 for every $\epsilon \geq 6\epsilon_0 + 7\epsilon_1 +9 $, where 
 \begin{align*}
   & \epsilon_0 = \max\Big(1,\; 2|L_0|\big( \max(L_0) + 3 \big) - 1\Big),\\ 
   & \epsilon_1 = \max\left(3,\; 7^{|L_1|+a-1}\big(2 \max(L_1)+1\big)\right).   
 \end{align*} 
 Furthermore, $\DeltaP\Gamma \supseteq
 \II(4\epsilon_0 + 5\epsilon_1 +7, \epsilon)$.
 \end{thm}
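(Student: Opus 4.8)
The plan is to treat the even cycles and the odd cycles (together with the optional $2$-cycle) separately, to produce a labeling for each part using the two existence theorems already established in this section, and then to fuse the two labelings with the extension Lemma \ref{lem:extension3}. Concretely, the even list $L_0$ will carry an $\alpha$-labeling coming from Theorem \ref{thm:main1:1}, while the odd list $L_1$ together with the $a$ copies of the $2$-cycle will carry an ordinary graceful labeling coming from Theorem \ref{thm:main2odd:1}. The hypothesis of Lemma \ref{lem:extension3} that the first labeling be an $\alpha$-labeling of \emph{odd} size is exactly what Theorem \ref{thm:main1:1} delivers, since its size $\epsilon_0^\ast = 2|L_0|(\max(L_0)+3)-1$ is odd; this parity matching is the structural reason the whole assembly goes through.

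First I would dispose of the degenerate case $L_0 = \emptyset$ (so $\epsilon_0 = 1$): here the graph is simply $[L_1, \;^a2 \mid \cdots]$, and Theorem \ref{thm:main2odd:2} already supplies a graceful labeling $\Gamma$ with $\DeltaP\Gamma \supseteq \II(\epsilon_1+1,\epsilon)$ for all $\epsilon \geq 3\epsilon_1$; since $6\epsilon_0 + 7\epsilon_1 + 9 = 7\epsilon_1 + 15 \geq 3\epsilon_1$ and $5\epsilon_1 + 11 \geq \epsilon_1 + 1$, both the admissible range of $\epsilon$ and the required path-difference interval follow immediately. For $L_0 \neq \emptyset$ I would invoke Theorem \ref{thm:main1:1} to get an $\AL(x_0,y_0)$ of $[L_0 \mid \cdots]_{\epsilon_0^\ast}$ with $x_0 = \tfrac{\max(L_0)}{2}+1 < y_0$ (here $\epsilon_0^\ast \geq 13$, so $\epsilon_0 = \epsilon_0^\ast$), and Theorem \ref{thm:main2odd:1} applied to $L_1$ with the given $a$ to get a $\GL(x_1,y_1)$ of $[L_1, \;^a2 \mid \cdots]$ of some size $\epsilon_1^\ast \leq \epsilon_1$ with $x_1 < y_1$. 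Feeding this pair into Lemma \ref{lem:extension3} yields, for every $\mu \geq B$ with $B \leq 5\epsilon_0^\ast + 6\epsilon_1^\ast + 9$, a graceful labeling $\Gamma_\mu$ of the required graph of total size $\epsilon_0^\ast + \epsilon_1^\ast + \mu$ with $\DeltaP\Gamma_\mu \supseteq \II(4\epsilon_0^\ast + 5\epsilon_1^\ast + 7, \epsilon_0^\ast + \epsilon_1^\ast + \mu)$. Given a target $\epsilon \geq 6\epsilon_0 + 7\epsilon_1 + 9$, I would simply set $\mu = \epsilon - \epsilon_0^\ast - \epsilon_1^\ast$, so that $\Gamma_\mu$ acquires exactly the prescribed size and path length.

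The step demanding the most care is the final bookkeeping rather than any fresh construction. Because Theorem \ref{thm:main2odd:1} returns a graceful labeling of an \emph{a priori uncontrolled} size $\epsilon_1^\ast$ (bounded only above by $\epsilon_1$), I must absorb the slack $\epsilon_1 - \epsilon_1^\ast$ into the freedom of the path-extension parameter $\mu$. Using $\epsilon_0 = \epsilon_0^\ast$, $\epsilon_1^\ast \leq \epsilon_1$, and $B \leq 5\epsilon_0^\ast + 6\epsilon_1^\ast + 9$, the chosen $\mu$ satisfies $\mu = \epsilon - \epsilon_0^\ast - \epsilon_1^\ast \geq 5\epsilon_0^\ast + (7\epsilon_1 - \epsilon_1^\ast) + 9 \geq 5\epsilon_0^\ast + 6\epsilon_1^\ast + 9 \geq B$, which is precisely the inequality required to apply the lemma; the parallel monotonicity $4\epsilon_0^\ast + 5\epsilon_1^\ast + 7 \leq 4\epsilon_0 + 5\epsilon_1 + 7$ then upgrades the path-difference conclusion to the stated $\II(4\epsilon_0 + 5\epsilon_1 + 7, \epsilon)$. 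Beyond verifying that each constituent theorem's hypotheses ($L_0$ nonempty and even, $\epsilon_0^\ast$ odd, $x_i < y_i$ for $i=0,1$) hold and that these inequalities align, I expect no genuine obstacle.
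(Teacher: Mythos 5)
Your proposal is correct and follows essentially the same route as the paper: an $\alpha$-labeling of $[L_0\mid\cdot]$ from Theorem \ref{thm:main1:1} (of odd size, as Lemma \ref{lem:extension3} requires), a graceful labeling of $[L_1,\;^a2\mid\cdot]$ from Theorem \ref{thm:main2odd:1}, and the fusion via Lemma \ref{lem:extension3} with exactly the bookkeeping $\mu=\epsilon-\epsilon_0^\ast-\epsilon_1^\ast\geq 5\epsilon_0^\ast+6\epsilon_1^\ast+9\geq B$. The only (harmless) divergence is the case $L_0=\emptyset$, which the paper keeps inside the same framework by taking the trivial $\alpha$-labeling $\{0,1\}$ of $[\,-\mid 1]$ with $m_0=\epsilon_0=1$, whereas you invoke Theorem \ref{thm:main2odd:2} directly; both give the stated conclusion.
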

\begin{proof} The existence of an $\alpha$-labeling $\Gamma_0$ of 
$[L_0\mid m_0]_{\epsilon_0}$ is trivial
when $L_0$ is empty, in which case $m_0 = \epsilon_0=1$ and then $\Gamma=\{0,1\}$; otherwise, 
$\Gamma_0$ exists by Theorem \ref{thm:main1:1}.
  Also, by Theorem \ref{thm:main2odd:1}, there exists a graceful labeling $\Gamma_1$ of 
  $[L_1,\,^a2 \mid m'_1]_{\epsilon'_1}$ for some positive
  $\epsilon'_1 \leq \epsilon_1$. 
  Note that $\epsilon_0$ is odd, and the paths in both $\Gamma_0$ and $\Gamma_1$ have distinct end vertices. 
  Therefore, by Lemma \ref{lem:extension3} there exists a graceful labeling $\Gamma$
  of $\left[L_0, L_1, \;^a2 \mid m_0 + m'_1 + \mu\right]_\epsilon$ 
  such that 
  $\DeltaP\Gamma\supseteq \II(4\epsilon_0 + 5\epsilon'_1 +7, \epsilon_0 + \epsilon'_1 +\mu)$,
  for every $\mu\geq 5\epsilon_0 + 6\epsilon'_1 +9$. 
  Considering that the size of $\Gamma$ is $\epsilon = \epsilon_0 + \epsilon'_1 +\mu$
  and that $\epsilon'_1\leq \epsilon_1$,
  the assertion easily follows.
\end{proof}

Note that lower bounds on the path length better than the one in Theorem \ref{thm:main2general:1} are given in Theorems \ref{thm:main1:2} and \ref{thm:main2even:2} when $L_1=\emptyset$,
and in Theorem \ref{thm:main2odd:2} when $L_0=\emptyset$.

Note that Theorem \ref{mainGL} is a consequence of Theorem \ref{thm:main2general:1} 
when $a=0$.


\section{New results on the Oberwolfach problem}  

In this section we construct solutions to the Oberwolfach problem $OP^\s(\lambda, F)$ 
whenever $F$ has a sufficiently large cycle, and 
either $F$ is a single-flip $2$-regular graph or $\lambda$ is even. 
These results are obtained by applying Theorem \ref{doubling3} to the $\alpha$-labelings
and graceful labelings built in Sections \ref{sec:alpha} and \ref{sec:beta}, respectively.

We start by proving Theorem \ref{mainOP}.
\\

\noindent
\textbf{Theorem \ref{mainOP}.}
\emph{ 
Let $F=[h, 2\ell_1, \ldots, 2\ell_r, \;^{2}\ell_{r+1}, \ldots, \;^{2}\ell_{s}]$ 
where
$2\leq \ell_1< \ell_2< \ldots < \ell_r$ and 
$h,\ell_{r+1},\ell_{r+2}, \ldots, \ell_{s} \geq 3$,
and let $I=\{i\mid 2<\ell_i\; \text{is even}, 1\leq i\leq s\}$.
Then $OP^\s(F)$ has a $(1-\s)$-pyramidal solution whenever 
\[h> 16\max(1,h_0) + 20\max(3,h_1) + 29,\] where
 \begin{align*}
   & h_0 = 2|I|\max\{\ell_i + 3\mid i\in I\}  - 1,\\
   & h_1 = 
   7^{s-|I|-1}\max\{2\ell_i+1\mid i\not\in I\;\text{and}\; \ell_i\neq 2\}. 
 \end{align*}
}
\begin{proof} It is helpful to represent the single-flip $2$-regular graph $F$ by suitably collecting the parameters $\ell_i$. 
  Let $K=\{\ell_i \mid 3\leq \ell_i, 1\leq i\leq r\}$ and $K'=\{\ell_{r+1}, \ell_{r+2}, \ldots, \ell_{s}\}$, and set $a=\left\lfloor 2/\ell_1\right\rfloor$. By assumption 
  $2\leq\ell_1< \ell_i$ for $2\leq i\leq r$, hence $a\in\{0,1\}$ and 
  $\ell_1\not\in K \Leftrightarrow \ell_1=2 \Leftrightarrow a=1$. 
  Therefore we can write $F= [h, \;^a4, 2K, \;^2K']$.
  
  Set $\epsilon_0=\max(1,h_0)$, $\epsilon_1=\max(3,h_1)$ 
  and assume that $h\geq 12\epsilon_0 + 14\epsilon_1 + 21$. 
  It is enough to show that 
  if $h$ is odd, then there is an $F$-starter (see Definition \ref{2-starter}) whose cycle passing through $\infty$ has length $h$. 
  Indeed,  it follows by Theorem \ref{1rotsols} that there is a $1$-rotational solution to $OP(F)$, 
  while the cases where $h$ is even and $\sigma\in\{-1,1\}$ are a straightforward consequence of 
  Theorem \ref{2pyrsols}.

  First, let $L_0 = \{\ell_i\mid i\in I\}$ and
  $L_1 = \{\ell_i\mid i\not\in I\;\text{and}\; \ell_i\neq 2\}$,
  and note that $L_0\ \cup\ L_1 = K\ \cup\ K'$. Let $m= (h-1)/2 -2a -1$ 
  (recall that  we are supposing  that $h$ is odd) and consider the graph
  $[L_0,  L_1, \;^a2 \mid m]_\epsilon$ of size 
  \begin{equation}\label{mainOP:eq}
    \epsilon = 
     \sum_{\ell\in L_0\cup L_1} \ell +  2a  + m \geq (h-1)/2 -1 
     \geq 8\epsilon_0 + 10\epsilon_1 +13.  
  \end{equation}
  By Theorem \ref{thm:main2general:1} there exists a graceful labeling $\Gamma$ of
  $[L_0,  L_1, \;^a2 \mid m]_\epsilon$ such that
  \[\DeltaP\Gamma \supseteq \II(3, \epsilon)\setminus I, \;\;\text{where}\;\;
  I=\II(3, 4\epsilon_0 + 5\epsilon_1 + 6).\]
  Therefore $\Gamma$ clearly satisfies condition 2 of Theorem \ref{doubling3}.
  Also, for every cycle $C$ of $\Gamma$, 
  we have that $-I\ \cap\ \Delta C\neq \emptyset$, hence
  $(-I+n)\ \cap\ (\Delta C + n) \neq \emptyset$ where $n=\epsilon + 2a+1$.
  One can easily check that $-I+n \subseteq \II(3, \epsilon)\setminus I$, 
  and by \ref{mainOP:eq} it follows that 
  every cycle of $\Gamma$ satisfies condition 1 of Theorem \ref{doubling3}, which 
  therefore guarantees that there is an $F$-starter whose cycle passing through 
  $\infty$ has length $h = 2m+4a+3$. 
\end{proof}

In the following we provide two improvements of Theorem \ref{mainOP} concerning the cases where all the $\ell_i$s are even or odd. 

\begin{thm}\label{mainOPeven} 
Let $F=[h, 2\ell_1, \ldots, 2\ell_r, \;^{2}\ell_{r+1}, \ldots, \;^{2}\ell_{s}]$ 
where the $\ell_i$s are all even,
$2\leq \ell_1< \ell_2< \ldots < \ell_r$, and $\ell_j\geq 4$ for every $r+1\leq j\leq s$.
Then $OP^\s(F)$ has a $(1-\s)$-pyramidal solution whenever 
\begin{equation}\label{mainOPeven:eq}
\textstyle{
h\geq 4\left(2s-\left\lfloor \frac{2}{\ell_1}\right\rfloor\right)(\ell+2)  
+4\left\lfloor \frac{2}{\ell_1}\right\rfloor - 1
\;\;\text{where}\;\;\ell = \max\{\ell_i\mid 1\leq i\leq s\}.
}
\end{equation}
In particular, the result holds when $h \geq 8s(\ell+2)$.
\end{thm}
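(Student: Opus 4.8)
The plan is to follow the architecture of the proof of Theorem \ref{mainOP}, but to feed the doubling construction with the \emph{even-specific} labelings of Section \ref{sec:beta} (Theorems \ref{thm:main1:2} and \ref{thm:main2even:2}) rather than with the combined labeling produced by Lemma \ref{lem:extension3}; this is exactly where the sharper bound will come from, since Lemma \ref{lem:extension3} is only needed when both even and odd cycles are present. As in Theorem \ref{mainOP}, it suffices to produce, for odd $h$, an $F$-starter (Definition \ref{2-starter}) whose cycle through $\infty$ has length $h$: Theorem \ref{1rotsols} then yields a $1$-rotational solution to $OP(F)$, while the even-$h$ cases with $\s\in\{-1,1\}$ follow from Theorem \ref{2pyrsols}. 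Writing $a=\lfloor 2/\ell_1\rfloor\in\{0,1\}$, I would set $K=\{\ell_i\mid 3\le \ell_i,\,1\le i\le r\}$, $K'=\{\ell_{r+1},\ldots,\ell_s\}$ and $L=K\cup K'$, so that $F=[h,\,{}^a4,\,2K,\,{}^2K']$ and $L$ is a list of even integers $\ge 4$ with $|L|=s-a$ and $\max(L)=\ell$.

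With $m=(h-4a-3)/2$ (an integer precisely because $h$ is odd), I would build a graceful labeling $\Gamma$ of $[L,\,{}^a2\mid m]_\epsilon$, where $\epsilon=\sum_{\ell'\in L}\ell'+2a+m$, so that Theorem \ref{doubling3} outputs a $[2m+4a+3,\,{}^a4,\,2K,\,{}^2K']$-starter with $2m+4a+3=h$. When $a=0$ the graph is bipartite and I would invoke Theorem \ref{thm:main1:2}; when $a=1$ I would invoke Theorem \ref{thm:main2even:2}. In both cases these yield $\DeltaP\Gamma\supseteq \II(\epsilon^\ast+1,\epsilon)$ with $\epsilon^\ast\approx 2|L|(\ell+3)$, so that condition~(2) of Theorem \ref{doubling3} (namely $\epsilon\in\DeltaP\Gamma$ when $a=1$) holds automatically.

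The heart of the argument is condition~(1) of Theorem \ref{doubling3}: for each cycle $C$ to be doubled I must exhibit a difference $d\in\Delta C$ with $n-d\in\DeltaP\Gamma$, where $n=\epsilon+2a+1$. Since the cycles of $\Gamma$ retain the differences of the base labeling of Theorem \ref{thm:main1:1} (path extension via Lemma \ref{lem:extension1} only enlarges $\DeltaP\Gamma$), every cycle difference lies in $\pm\II(1,\epsilon^\ast)$, and in the staggered construction of Theorem \ref{thm:main1:1} the smallest positive difference carried by any cycle is at most about $2(|L|-1)(\ell+3)$. Taking such a $d$, the value $n-d$ exceeds $\epsilon^\ast$ and stays $\le\epsilon$ once $\epsilon\ge T$ for a threshold $T\approx 2(\ell+3)(2|L|-1)$, so $n-d\in\II(\epsilon^\ast+1,\epsilon)\subseteq\DeltaP\Gamma$. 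Finally, using $h=2\epsilon-2\sum_{\ell'\in L}\ell'+3$, the requirement $\epsilon\ge T$ reads $h\ge 2T-2\sum_{\ell'\in L}\ell'+3$; and because $\sum_{\ell'\in L}\ell'\ge 4|L|=4(s-a)$, it is implied by $h\ge 2T-8(s-a)+3$, which a direct computation shows is at most $4(2s-a)(\ell+2)+4a-1$. The cruder bound $h\ge 8s(\ell+2)$ then follows at once. Here the estimate $\sum_{\ell'\in L}\ell'\ge 4(s-a)$ is what converts each factor $(\ell+3)$ into $(\ell+2)$.

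The main obstacle I anticipate is the bookkeeping in condition~(1): in the labeling of Theorem \ref{thm:main1:1} the various cycles carry differences at different scales, and the binding case is the single cycle carrying the largest block of differences, for which one must check that even its smallest difference $d$ still satisfies $n-d\ge\epsilon^\ast+1$. A secondary nuisance is the degenerate regime ($s=1$, or $L=\emptyset$, or $\ell_1=2$ with very few remaining cycles), where the generic existence thresholds of Theorems \ref{thm:main1:2} and \ref{thm:main2even:2} are weaker than the target; there I would instead use the sharp single-even-cycle labeling of Theorem \ref{evencycle+path} (valid once the path has length $\ge\ell+5$), which comfortably meets the claimed bound.
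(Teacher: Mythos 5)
Your proposal follows essentially the same route as the paper's proof: reduce to producing an $F$-starter for odd $h$ (via Theorems \ref{1rotsols} and \ref{2pyrsols}), write $F=[h,\,{}^a4,\,2K,\,{}^2K']$ with $a=\lfloor 2/\ell_1\rfloor$, feed the even-cycle labelings of Theorems \ref{thm:main1:1}/\ref{thm:main1:2} and \ref{thm:main2even:2} into the doubling construction of Theorem \ref{doubling3}, and verify its two conditions by interval arithmetic on $\DeltaP\Gamma$, finally using $\sum_{\ell'\in L}\ell'\geq 4|L|$ to trade $(\ell+3)$ for $(\ell+2)$. The only discrepancy is a harmless one: the smallest positive difference carried by a cycle in Theorem \ref{thm:main1:1} can be as large as $2|L|(\ell+3)-1$ rather than your ``about $2(|L|-1)(\ell+3)$'', but the resulting threshold $\epsilon\geq(4|L|+2a)(\ell+3)-2$ still yields exactly the claimed bound on $h$.
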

\begin{proof} 
Let $K=\{\ell_i \mid 3\leq \ell_i, 1\leq i\leq r\}$ and $K'=\{\ell_{r+1}, \ell_{r+2}, \ldots, \ell_{s}\}$, and set $a=\left\lfloor 2/\ell_1\right\rfloor$. 
Note that $\ell_1\not\in K \Leftrightarrow \ell_1=2 \Leftrightarrow a=1$. 
Therefore we can write $F = [h, \;^a4, 2K, \;^2K']$. 

As in the proof of Theorem \ref{mainOP}, it is enough to show that 
if $h$ is an odd integer satisfying \eqref{mainOPeven:eq}, then there is an 
$F$-starter whose cycle passing through $\infty$ has length $h$.
  
  Let $L = K\ \cup\ K'$ and $m= (h-1)/2 -2a -1$ 
  (recall that $h$ is odd). Note that $|L| +a = s$, and by \eqref{mainOPeven:eq} we have that
  $m\geq  \left(4|L|+2a\right)(\ell+2) -2$. 
  Therefore the graph
  $[L, \;^a2 \mid m]_\epsilon$ has size 
  \begin{equation*}
    \epsilon = 
     \sum_{\ell\in L} \ell +  2a  + m \geq 4|L|+2a + m = 
     \left(4|L|+2a\right)(\ell+3) -2, 
  \end{equation*}
  and Theorems \ref{thm:main1:1} and \ref{thm:main2even:2} guarantee that 
  $[L, \;^a2 \mid m]_\epsilon$ has a graceful labeling $\Gamma$ 
  such that
  \begin{equation}\label{mainOPeven:eq2}
    \DeltaP\Gamma \supseteq \II(3, \epsilon)\setminus I, 
    \;\;\text{where}\;\;
    I=\II\big(3, (2|L|+a)(\ell+3) + a -1\big).
  \end{equation}
  Clearly $\Gamma$ satisfies condition 2 of Theorem \ref{doubling3}.  
  Furthermore, for every cycle $C$ of $\Gamma$ 
  we have that $-I\ \cap\ \Delta C\neq \emptyset$, hence
  $(-I+n)\ \cap\ (\Delta C + n) \neq \emptyset$ where $n=\epsilon + 2a+1$.
  One can easily check that $-I+n \subseteq \II(3, \epsilon)\setminus I$, 
  and by \ref{mainOPeven:eq2} it follows that 
  every cycle of $\Gamma$ satisfies condition 1 of Theorem \ref{doubling3}, which 
  therefore guarantees that there is an $F$-starter whose cycle passing through 
  $\infty$ has length $h = 2m+4a+3$. 
\end{proof}  

\begin{thm}\label{mainOPodd} 
Let $F=[h, 2\ell_1, \ldots, 2\ell_r, \;^{2}\ell_{r+1}, \ldots, \;^{2}\ell_{s}]$ 
where the $\ell_i$s are all odd,
$3\leq \ell_1< \ell_2< \ldots < \ell_r$, and $\ell_j\geq 3$ for every $r+1\leq j\leq s$.
Then $OP^\s(F)$ has a $(1-\s)$-pyramidal solution whenever 
\begin{equation}\label{mainOPodd:eq}
h> 7^{s-1}(12\ell+6)
\;\;\text{where}\;\;\ell = \max\{\ell_i\mid 1\leq i\leq s\}.
\end{equation}
\end{thm}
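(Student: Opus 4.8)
The plan is to follow the template already established in the proofs of Theorems \ref{mainOP} and \ref{mainOPeven}, feeding in the all-odd graceful labelings produced by Theorem \ref{thm:main2odd:2}. First I would record the structural simplification peculiar to this case: since $\ell_1\geq 3$, the parameter $a=\lfloor 2/\ell_1\rfloor$ appearing in the general argument is forced to be $0$, so $F$ has no $2$-cycle and may be written as $F=[h,\,2K,\,{}^{2}K']$ with $K=\{\ell_1,\ldots,\ell_r\}$, $K'=\{\ell_{r+1},\ldots,\ell_s\}$, and $L=K\cup K'$ of cardinality $s$. As in Theorem \ref{mainOP}, it suffices to produce, for odd $h$, an $F$-starter whose cycle through $\infty$ has length $h$: Theorem \ref{1rotsols} then yields the $1$-rotational ($=1$-pyramidal) solution when $\s=0$, while the even-$h$ cases $\s=\pm1$ follow from Theorem \ref{2pyrsols} by constructing a starter whose $\infty$-cycle has the adjacent odd length $h\mp1$. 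I would set $\epsilon_1=7^{\,s-1}(2\ell+1)$, which is exactly the quantity produced by Theorem \ref{thm:main2odd:2} with $a=0$ and $|L|=s$, and note that the hypothesis $h>7^{\,s-1}(12\ell+6)$ is precisely $h>6\epsilon_1$.

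Next, for odd $h$ I would put $m=(h-3)/2$ (an integer) and consider the zillion graph $[L\mid m]_\epsilon$ of size $\epsilon=\sum_{\ell\in L}\ell+m$. A short estimate using $h>6\epsilon_1$ together with $\sum_{\ell\in L}\ell\geq 3s\geq 3$ gives $\epsilon>3\epsilon_1$, so Theorem \ref{thm:main2odd:2} applies and delivers a graceful labeling $\Gamma$ of $[L\mid m]_\epsilon$ with $\DeltaP\Gamma\supseteq\II(\epsilon_1+1,\epsilon)$. Writing $I=\II(3,\epsilon_1)$, this reads $\DeltaP\Gamma\supseteq\II(3,\epsilon)\setminus I$, the form convenient for invoking the doubling construction.

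Then I would verify the two hypotheses of Theorem \ref{doubling3} with $a=0$ and $n=\epsilon+1$. Condition $2$ is vacuous since $a=0$. For condition $1$, I would first observe that, because the path already realizes every magnitude in $\II(\epsilon_1+1,\epsilon)$ and a graceful labeling has pairwise distinct difference-magnitudes, every cycle edge has magnitude in $\II(1,\epsilon_1)$; among the $\ell_i\geq 3$ distinct magnitudes on a cycle $C$ at most two lie in $\{1,2\}$, so at least one lies in $\II(3,\epsilon_1)$, whence $-I\cap\Delta C\neq\emptyset$ (recall $\Delta C$ is symmetric, being taken over ordered pairs). Since $\epsilon\geq 3\epsilon_1\geq 2\epsilon_1$, a direct check shows $-I+n=\II(\epsilon+1-\epsilon_1,\epsilon-2)\subseteq\II(\epsilon_1+1,\epsilon)\subseteq\DeltaP\Gamma$, so $(\Delta C+n)\cap\DeltaP\Gamma\neq\emptyset$ for every cycle $C$. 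Theorem \ref{doubling3} then produces a $[\underline{2m+3},\,2K,\,{}^{2}K']$-starter, that is, an $F$-starter whose $\infty$-cycle has length $2m+3=h$, completing the odd case and, via the reduction above, the theorem.

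The argument is essentially a bookkeeping specialization, so no single step is a genuine obstacle; the only places demanding care are the arithmetic guaranteeing $\epsilon\geq 3\epsilon_1$ (so that Theorem \ref{thm:main2odd:2} is applicable and the path difference set is long enough) and the containment $-I+n\subseteq\DeltaP\Gamma$, which is exactly what makes every cycle satisfy the hypothesis of the doubling construction. The structural remark that $a=0$ throughout is what allows us to bypass the $2$-cycle complications present in the general Theorem \ref{mainOP}.
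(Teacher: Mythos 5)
Your proof follows the paper's argument for Theorem \ref{mainOPodd} essentially verbatim: the same reduction to producing an $F$-starter with $\infty$-cycle of odd length $h$, the same choice $m=(h-3)/2$ and size estimate $\epsilon\geq 3\epsilon_1$, the same appeal to Theorem \ref{thm:main2odd:2} followed by Theorem \ref{doubling3}, with your only addition being to spell out the verification of condition 1 that the paper leaves as ``one can easily check.'' (A cosmetic slip only: by Theorem \ref{2pyrsols} the $\infty$-cycle should have length $h-1$ when $\s=-1$ and $h+1$ when $\s=+1$, i.e.\ the opposite pairing to your $h\mp1$; this does not affect the argument.)
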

\begin{proof} The proof is very similar to the two previous ones.
Letting $K=\{\ell_1,$ $\ell_2, \ldots, \ell_r\}$ and $K'=\{\ell_{r+1}, \ell_{r+2}, \ldots, \ell_{s}\}$,
we can write $F = [h,  2K, \;^2K']$. 
It is enough to show that 
if $h$ is an odd integer satisfying \eqref{mainOPodd:eq}, then there is an 
$F$-starter whose cycle passing through $\infty$ has length $h$.
  
  Let $L = K\ \cup\ K'$ and $m= (h-3)/2$. 
  By \eqref{mainOPodd:eq} we have that the graph
  $[L, \mid m]_\epsilon$ has size 
  \begin{equation*}
    \epsilon = 
     \sum_{\ell\in L} \ell +  m \geq 7^{s-1}(6\ell+3).
  \end{equation*}
  and Theorem \ref{thm:main2odd:2} guarantees that 
  $[L, \mid m]_\epsilon$ has a graceful labeling $\Gamma$ such that
  $\DeltaP\Gamma \supseteq \II(3, \epsilon)\setminus I$, 
  where $I=\II\big(3, 7^{s-1}(2\ell+1)\big)$.
  One can esaily check that $\Gamma$ satisfies the assumptions of 
  Theorem \ref{doubling3}, 
  which therefore guarantees that there is an $F$-starter whose cycle passing through 
  $\infty$ has length $h = 2m+3$. 
\end{proof}

The following theorem concludes this section with further results on 
$OP(\lambda, F)$ for even $\lambda$, 
which are straightforward applications of Theorem \ref{1rotsols2} to the
graceful labelings built in Corollary \ref{cor:main1}, and Theorems \ref{thm:main2odd:2}
and \ref{thm:main2general:1}.
\\

\noindent
\textbf{Theorem \ref{mainOP2}.}
\emph{ 
Let $F=[h, \ell_1,\ldots, \ell_r, \ell'_1,\ldots, \ell'_s,]$ be any simple $2$-regular graph,
where the $\ell_i$s are even and the $\ell'_j$s are odd. 
Also, set $\ell=\max\{\ell_1,\ldots, \ell_r\}$ and  $\ell'=\max\{\ell'_1,\ldots, \ell'_s\}$.
Then $OP(2, F)$ has a $1$-rotational solution whenever
\[h> 
  \begin{cases}
    12r(\ell+3) + 7^{s}(2\ell'+1) -6  &\text{if $r,s > 0$}, \\    
    2(r+1)(\ell+1) +5 &\text{if $r> 0=s$},  \\    
    3\cdot7^{s-1}(2\ell'+1) &\text{if $s> 0=r$}. \\       
  \end{cases} 
\]   
}

We point out that this last result does not require that $F$ be 
a single-flip $2$-factor.


\section{Conclusion}
This paper deals with the Oberwolfach problem $OP(F)$, posed by Ringel in 1967, which asks for 
a decomposition of $K_v$ into copies of a given $2$-regular graph $F$:
an $F$-factorization of $K_v$. 
Necessarily, for such a decomposition to exist, 
the order of $F$ must be odd. When $F$ has even order, we consider the
maximum packing version $OP^-(F)$ and the minimum covering version
$OP^+(F)$, which ask for $F$-factorizations of $K_v-I$ and $K_v+I$, respectively, where $I$ 
is a $1$-factor of $K_v$. A further extension of the original problem consists of replacing
$K_v$ by $\lambda K_v$ (the $\lambda$-fold complete graph), and 
$K_v\pm I$ by $\lambda K_v \pm I$. 
However, since we do not require that a factorization have pairwise distinct factors, it is enough to focus on the original problem $OP(F)$, its two variants $OP^\pm(F)$, 
and $OP(2, F)$ which asks for an $F$-factorization of
$2K_v$. 

Although these problems have been extensively studied, except for $OP^+(F)$ which has been formally investigated only recently, they remain widely open.
When $\lambda=1$, Theorem \ref{mainOP} gives the solvability provided that $F$ 
is a single-flip $2$-regular graph with a cycle whose length is greater than a given lower bound. 
Theorems \ref{mainOPeven} and \ref{mainOPodd} improve this bound  when $F$ has additional properties. 
When $\lambda=2$, 
Theorem \ref{mainOP2} gives a similar result to Theorem \ref{mainOP} 
but without requiring that $F$ be single-flip.
Furthermore, our solutions are pyramidal since they have an automorphism group fixing at most two points and acting sharply transitively on the remaining. 

The constructions in this paper rely on a doubling construction 
defined in \cite{Buratti Traetta 12}
which can be applied to graceful and $\alpha$-labelings of zillion graphs, 
which are 
$2$-regular graphs with a vertex removed.
Zillion graphs can be  described in terms of the size of their components which are cycles and exactly one path.
In Sections 4 and 5, we show that such labelings exist for zillion graphs provided that the length of the path component satisfies
a given lower bound (Theorem \ref{mainGL}).

We also point out the analogy between the class 
of zillion graphs 
and the class of trees: both these types of graphs have size equal to the order minus 1. 
The gracefulness of trees has been widely studied, in order to settle the 
graceful tree conjecture which, despite the efforts of many, remains open. 
Surprisingly, 
despite having a structure simpler than trees, 
the gracefulness of zillion graphs is still uncertain in general.

Graceful labelings of those with two components, one cycle and one path, were built in \cite{Traetta 13} settling a conjecture posed by Frucht and Salinas  in 1985. 
As far as we know, Theorem \ref{mainGL} represents the first result on the gracefulness of zillion graphs with more than two components.
Our results led us to formulate three conjectures concerning the gracefulness of
zillion graphs
(Conjectures \ref{conjectureweak}--\ref{conjecturestrongest}).
Corollary \ref{cor:main1} actually proves Conjecture \ref{conjectureweak} when all the cycles have even length. In fact, we believe that the lower bound on the path length need not depend on the cycle lengths, leading us to propose Conjecture \ref{conjecturestrong}. 
Finally, we formulate Conjecture \ref{conjecturestrongest} concerning the size of a graceful zillion graph.

\section*{Acknowledgements}
The authors gratefully acknowledge support from the following sources.  
A.C.\ Burgess and P.\ Danziger have received support from NSERC Discovery Grants RGPIN-2019-04328 and 
RGPIN-2016-04178, respectively. 
T. Traetta has received support from GNSAGA of Istituto Nazionale di Alta Matematica.

\end{document}